\documentclass[12pt]{amsart}

\usepackage{amsfonts, amstext, amsmath, amsthm, amscd, amssymb, enumitem, url}
\usepackage{tikz-cd}
\usepackage{xcolor}
\usepackage{svg}
\usepackage{pdfpages}
\usepackage{pdflscape}

\setlength{\topmargin}{0.5cm}
\setlength{\oddsidemargin}{-0.2cm}
\setlength{\evensidemargin}{-0.2cm}
\textheight = 22cm  
\textwidth = 16.2cm
\usepackage[parfill]{parskip}
\usepackage{microtype}

\usepackage[colorlinks,citecolor=cyan,linkcolor=magenta]{hyperref}
\usepackage[nameinlink]{cleveref}

\newtheorem{thm}{Theorem}[section]

\newtheorem{lemma}[thm]{Lemma}
\newtheorem{prop}[thm]{Proposition}

\newtheorem{conj}[thm]{Conjecture}

\newtheorem*{prop:noperfectfitexists}{\Cref{prop:noperfectfitexists}}
\newtheorem*{thm:cuspedBirkhoffsection}{\Cref{thm:cuspedBirkhoffsection}}
\newtheorem*{thm:closedBirkhoffsection}{\Cref{thm:closedBirkhoffsection}}

\theoremstyle{definition}
\newtheorem{defn}[thm]{Definition}
\newtheorem{rmk}[thm]{Remark}
\newtheorem{constr}[thm]{Construction}
\newtheorem{quest}[thm]{Question}

\numberwithin{equation}{section}

\renewcommand{\epsilon}{\varepsilon}

\usepackage{stmaryrd}
\newcommand{\cut}{\!\bbslash\!}
\newcommand{\ind}{\mathrm{ind}}
\renewcommand{\top}{\mathrm{top}}
\newcommand{\red}{\mathrm{red}}

\begin{document}

\title[Constructing Birkhoff sections]{Constructing Birkhoff sections for pseudo-Anosov flows with controlled complexity}

\author{Chi Cheuk Tsang}
\address{University of California, Berkeley \\
    970 Evans Hall \#3840 \\
    Berkeley, CA 94720-3840}
\email{chicheuk@math.berkeley.edu}
\thanks{Chi Cheuk Tsang was partially supported by a grant from the Simons Foundation \#376200.}

\maketitle

\begin{abstract}
We introduce a new method of constructing Birkhoff sections for pseudo-Anosov flows, which uses the connection between pseudo-Anosov flows and veering triangulations. This method allows for explicit constructions, as well as control over the Birkhoff section in terms of its Euler characteristic and the complexity of the boundary orbits. In particular, we show that any transitive pseudo-Anosov flow has a Birkhoff section with two boundary components.
\end{abstract}

\section{Introduction} \label{sec:intro}

Birkhoff sections are a classical tool for studying flows on 3-manifolds, appearing back in the early $20^{\text{th}}$ century in work of Poincare and Birkhoff. They can be used to reduce questions about the dynamics of 3-dimensional flows to dynamics of surface homeomorphisms. See \cite{Fra92} and \cite{HWZ98} for two classical applications. The question of when a flow has a Birkhoff section, especially when the flow is a Reeb flow, is still a popular research topic. See \cite{CM21} and \cite{CDHR22} for some recent progress.

One class of flows which is long known to admit Birkhoff sections are Anosov flows, which were introduced by D. V. Anosov back in the 1960s. This result is due to work of Fried in \cite{Fri83}. The class of Anosov flows was later expanded to the class of pseudo-Anosov flows by Thurston for wider applicability in the study of 3-manifold topology, and Brunella generalized Fried's result to this larger class of flows in \cite{Bru95}. 

\begin{thm}[{\cite{Fri83}, \cite{Bru95}}] \label{thm:classicexistence} 
Let $\phi$ be a transitive pseudo-Anosov flow on a closed 3-manifold. Then $\phi$ has a Birkhoff section.
\end{thm}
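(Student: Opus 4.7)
The plan is to follow the classical Markov partition approach of Fried, as extended by Brunella to the pseudo-Anosov setting.

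First I would construct a Markov partition for $\phi$: a finite collection $\{R_1,\dots,R_n\}$ of rectangles transverse to the flow, each carrying a local product structure induced by the weak stable and unstable foliations, such that every orbit hits the union $\bigcup R_i$ in bounded time. The existence of such a partition in the pseudo-Anosov setting follows by combining transitivity (which yields the recurrence needed for a finite coding) with expansivity and the local product structure away from the singular orbits. Near a $k$-prong singular orbit, one arranges that the rectangles meet in a $k$-pronged pattern and that each rectangle contains at most one singular orbit, with stable/unstable boundary contained in singular leaves.

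Next, the surface $S_0 := \bigcup R_i$ is transverse to $\phi$, but it is not yet a Birkhoff section: the stable and unstable edges of the $R_i$ typically fail to match under the first return map, so $\partial S_0$ does not consist of closed orbits. I would apply Fried's completion procedure, which flows the stable boundary of each $R_i$ forward until it meets the unstable boundary of some $R_j$, and symmetrically for the unstable boundary. The resulting surface $S$ has boundary on a finite collection of periodic orbits, namely the orbits through the corners of the rectangles together with the singular orbits of $\phi$.

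The main obstacle, I expect, is to ensure that $S$ is \emph{embedded} and not merely immersed. Around each boundary periodic orbit $\gamma$, the completion glues together several half-disks, and these can overlap if the gluing is not controlled; the remedy is to work inside a tubular neighborhood of $\gamma$ modelled on a spiraling annulus and verify that the competing boundary arcs spiral compatibly. Prong singularities introduce an additional subtlety, since the local model near $\gamma$ differs from the smooth Anosov one, but the $k$-fold symmetry at a singular orbit permits the construction to be carried through equivariantly. Once embeddedness is established, the bounded hit-time is automatic from compactness of $M$ together with the covering property of the Markov partition, and $S$ is the desired Birkhoff section.
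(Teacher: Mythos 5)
Your route is the classical Fried--Brunella one, which is not the route this paper takes: \Cref{thm:classicexistence} is only cited here, and the independent proof the paper offers runs through \Cref{prop:noperfectfitexists} (a transitive flow has no perfect fits relative to its singular orbits plus one well-chosen orbit), the veering triangulation of \Cref{thm:LMT}, and the assembly of helicoidal broken transverse surfaces with shearing-region boundaries in \Cref{thm:cuspedBirkhoffsection} and \Cref{thm:closedBirkhoffsection}, finished by Fried resolution. The classical argument is a legitimate proof of the bare statement; what the paper's approach buys, and a compactness/Markov-partition argument does not, is explicit control over the Euler characteristic of the section and over the number and complexity of its boundary orbits (ultimately, two boundary components as in \Cref{thm:twoboundarycomponents}).

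That said, your completion step has a genuine gap as written. By the Markov property, the stable boundary of the partition is forward-invariant under the return map: flowing $\partial^s R_i$ forward, it lands again inside stable boundary segments and never ``meets the unstable boundary of some $R_j$'' in finite time, so the procedure you describe neither terminates nor produces boundary lying along closed orbits. Relatedly, the corners of Markov rectangles are intersections of a stable and an unstable boundary leaf and are in general heteroclinic, not periodic, so ``the orbits through the corners'' is not the right boundary collection. The repair is (i) to refine the partition so that each stable (resp.\ unstable) boundary segment lies in the stable (resp.\ unstable) leaf of one of finitely many periodic orbits, and (ii) to isotope each boundary segment along the flow for infinite time so that it spirals onto the corresponding periodic orbit. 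Even then the outcome is only an \emph{immersed} Birkhoff section --- the spiraling collars of different rectangles necessarily overlap near each boundary orbit --- so one cannot ``verify embeddedness''; one must finish with the cut-and-paste resolution recalled in \Cref{constr:Friedresolution}. With those two repairs your outline becomes the standard argument.
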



One unsatisfying feature of Fried's and Brunella's proofs of \Cref{thm:classicexistence}, however, is that they are compactness arguments: Roughly, the proofs go by constructing small local transverse surfaces, using compactness to argue that finitely many of these cover up the flow, then piecing them together. As a result, one has no control over how complicated the Birkhoff section is.

In this paper, we present a new method of constructing Birkhoff sections for pseudo-Anosov flows which does allow for control over the complexity. This method uses the recent technology of veering triangulations and their connection with pseudo-Anosov flows: Roughly, given a pseudo-Anosov flow on an oriented closed 3-manifold, there exists a veering triangulation whose faces are positively transverse to the flow.

The idea is to construct transverse surfaces to the flow using the combinatorics of the triangulation. By piecing these surfaces together and resolving the self-intersections, we get a Birkhoff section to the flow. The control over the complexity comes from the fact that the combinatorics of the triangulation can be explicitly described, which allows for control over the constructed transverse surfaces. Our main result is that it is always possible to arrange for the final Birkhoff section to have two boundary components.

\begin{thm} \label{thm:twoboundarycomponents}
Let $\phi$ be a transitive pseudo-Anosov flow on a closed 3-manifold. Then $\phi$ has a Birkhoff section with two boundary components, where each boundary component is embedded along a closed orbit of $\phi$.
\end{thm}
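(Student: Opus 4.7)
The plan is to exploit the correspondence between transitive pseudo-Anosov flows and veering triangulations, which provides a combinatorial supply of transverse surfaces. Let $\gamma \subset M$ denote the union of singular orbits of $\phi$, and let $\tau$ be the veering triangulation on $M \setminus \gamma$ arising from the Agol--Guéritaud construction; the 2-dimensional faces of $\tau$ are positively transverse to $\phi$.

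First, I would build a transverse surface $S_0 \subset M \setminus \gamma$ as a union of oriented faces of $\tau$ satisfying a local balancing condition along each edge (so that no boundary appears in the interior of $M \setminus \gamma$). The combinatorics of veering triangulations control the slopes of $\partial S_0$ on each cusp torus via the ladderpole structure: the boundary consists of curves with a prescribed direction relative to the degeneracy locus, so I can predict exactly how $\partial S_0$ meets each cusp. This step is essentially a \emph{cusped} Birkhoff section — presumably the content of \Cref{thm:cuspedBirkhoffsection}. If $S_0$ has self-intersections along edges, I would resolve them by the standard oriented cut-and-paste so that the result is embedded while remaining transverse to $\phi$.

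Second, I would close $S_0$ up to a Birkhoff section $S \subset M$ by attaching, to each boundary curve of $S_0$, a Birkhoff annulus swept out by flow segments terminating on the corresponding singular orbit in $\gamma$. This gives a surface whose interior is transverse to $\phi$ and whose boundary is embedded along closed orbits. At this stage we have the existence of \emph{some} Birkhoff section, recovering \Cref{thm:classicexistence} by the veering route; this is presumably \Cref{thm:closedBirkhoffsection}.

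The main obstacle — and the whole point of the theorem — is arranging for $\partial S$ to have exactly two components. A priori the construction produces one boundary component per ladderpole orbit on each cusp, which can be arbitrarily many. To cut this down, I would argue in two stages. First, I would show that one can always add or subtract copies of faces of $\tau$ (equivalently, modify $S_0$ within the branched surface carried by the 2-skeleton) so that on every cusp torus the boundary $\partial S_0$ becomes a single connected curve, winding the appropriate number of times. Second, I would merge boundary components lying on \emph{different} closed orbits by inserting long thin rectangular \emph{bridges} along flow segments connecting nearby orbits; here transitivity of $\phi$ is essential, as it supplies flow lines approaching any prescribed pair of closed orbits to arbitrary precision. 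Iterating the merging procedure reduces the boundary count, and the hard part will be showing that one can always stop at exactly two components rather than being forced down to one (which is obstructed by the homological condition that $[\partial S]$ must vanish in $H_1$ of a neighborhood of $\gamma$). The strategy is to earmark two closed orbits in advance and perform all merges so as to deposit the remaining boundary onto exactly these two orbits, using the flexibility in choosing the veering triangulation's combinatorial data to realize any pair of orbits as endpoints of a connecting flow arc.
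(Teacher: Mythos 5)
Your proposal has two genuine gaps, one at the very first step and one at the crucial final step.

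First, you cannot take the veering triangulation on the complement of the singular orbits alone. The Agol--Gu\'eritaud construction requires that $\phi$ have no perfect fits \emph{relative to the drilled collection}, and this fails in general for the collection of singular orbits; for an Anosov flow that collection is empty, so there is no cusped manifold to triangulate at all. The paper's first substantive step (\Cref{prop:noperfectfitexists}) is to use transitivity, the shadowing lemma, Goodman--Fried surgery, and Fenley's perfect-fit-implies-lozenge result to produce one additional closed orbit $\gamma$ such that $\phi$ has no perfect fits relative to the singular orbits together with $\gamma$. Without this step the triangulation you want to work with does not exist.

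Second, the mechanism by which you reach exactly two boundary components does not work as described. A ``bridge'' inserted along flow segments connecting two orbits is swept out by flow lines and is therefore tangent to the flow, not transverse, so it cannot be part of a Birkhoff section's interior; and surfaces carried by the 2-skeleton have boundary slopes constrained to the ladderpole classes, so you cannot adjust $\partial S_0$ on a cusp torus to an arbitrary curve by adding or subtracting faces. The paper's route is different: it never merges boundary components. It assembles the section from (i) the upper boundaries of the shearing regions of Schleimer--Segerman, which meet every orbit, and (ii) exactly two helicoidal broken transverse surfaces winding around two new closed orbits $\gamma_R,\gamma_L$, whose horizontal boundaries are engineered (via admissible edge collections $\to$ edge sequences $\to$ winding edge paths, the latter requiring a technical modification of the Landry--Minsky--Taylor construction) to cancel exactly against the horizontal boundaries of the shearing surfaces. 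The helicoid boundaries carry a transversal component on each cusp torus, and the winding numbers are tuned so that the total boundary class on each cusp of $\mathcal{C}$ is a multiple of the meridian; Fried resolution then collapses all boundary on $\mathcal{C}$, leaving only the two embedded components along $\gamma_R$ and $\gamma_L$ (embeddedness coming from primitivity of the corresponding deck transformations). Your instinct that the cusp homology is the obstruction is right, but the resolution is to make the boundary meridional and let it vanish, not to cap it off with Birkhoff annuli along the singular orbits.
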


\Cref{thm:twoboundarycomponents} follows from the more technical \Cref{thm:closedBirkhoffsection} which in addition gives explicit bounds on the Euler characteristic of the Birkhoff section as well as the complexity of its boundary components. 

For the rest of this introduction, we discuss the context of \Cref{thm:twoboundarycomponents} within the literature of pseudo-Anosov flows, describe some ideas in the proof of \Cref{thm:twoboundarycomponents}, and provide an outline of the paper.

\subsection{Context for `two boundary components'}

In \cite[P.57]{Thu97}, Thurston asks for a description of `the minimal collections of orbits that need to be removed for the [pseudo-Anosov] flow to admit a section'. \Cref{thm:twoboundarycomponents} can be seen as some progress towards this: For a general pseudo-Anosov flow, a minimal collection with the least number of elements will have two orbits. In fact, \Cref{thm:twoboundarycomponents} gives slightly more since it asserts that one can find a Birkhoff section with one boundary component embedded along each of two closed orbits.

In the case of Anosov flows, Marty recently showed that an Anosov flow admits a Birkhoff section with only one boundary component if and only if it is skew $\mathbb{R}$-covered (\cite[Theorem G]{Mar21}, \cite[Theorem E]{Mar23}). Together with \Cref{thm:twoboundarycomponents}, this provides the following neat trichotomy for Anosov flows. 

\begin{table}[h]
    \centering
    \renewcommand{\arraystretch}{1.2}
    \begin{tabular}{|c|c|}
        \hline
        Orbit space & Birkhoff section \\
        \hline
        Trivial $\mathbb{R}$-covered & Closed \\
        Skew $\mathbb{R}$-covered & One boundary component \\
        Non-$\mathbb{R}$-covered & Two boundary components \\
        \hline
    \end{tabular}
\end{table}

See also \cite[Table 1]{Mar23}.

In particular, since non-$\mathbb{R}$-covered Anosov flows are known to exist in plenty (see, for example \cite{BI20}), this shows that \Cref{thm:twoboundarycomponents} is sharp. This also gives an exact classification of Anosov flows for which \Cref{thm:twoboundarycomponents} is sharp.

\subsection{Veering triangulations}

We review some key points from the theory of veering triangulations in order to explain some ideas in the proof of \Cref{thm:twoboundarycomponents}. See \Cref{subsec:vt}, \Cref{sec:LMT}, and \Cref{sec:closedorbits} for more details.

Let $\phi$ be a pseudo-Anosov flow on an oriented closed 3-manifold $M$. Let $\mathcal{C}$ be a collection of closed orbits satisfying the technical condition that $\phi$ has no perfect fits relative to $\mathcal{C}$; we show that such a collection always exists when $\phi$ is transitive.

\begin{prop:noperfectfitexists}
Let $\phi$ be a transitive pseudo-Anosov flow on a closed 3-manifold. There exists a collection of orbits $\mathcal{C}$ such that $\phi$ is without perfect fits relative to $\mathcal{C}$. In fact, $\mathcal{C}$ can be chosen to be the set of singular orbits and one other orbit.
\end{prop:noperfectfitexists}

The main result is then that there exists a veering triangulation on the cusped manifold $M \backslash \bigcup \mathcal{C}$. Moreover, the $2$-skeleton of the triangulation is positively transverse to $\phi$ and the combinatorics of the triangulation encodes the dynamics of the flow. 

The existence of the triangulation is unpublished work of Agol-Gu\'eritaud, while transversality of the $2$-skeleton is due to Landry-Minsky-Taylor \cite{LMT21}. To achieve this, they had to show that one can place the edges appropriately while constructing the triangulation. An important point for us is that there is no canonical choice for this placement of edges, and we shall utilize this freedom to construct some transverse surfaces to the flow.

\subsection{Method of constructing Birkhoff sections}

We introduce the notion of a broken transverse surface to a flow. These are surfaces which have a vertical boundary and a horizontal boundary. The vertical boundary is tangent to the flow whereas the interior of the surface and the horizontal boundary are transverse to the flow. See \Cref{defn:brokentransversesurface} for details. Provided that their horizontal boundaries match up, one can glue up a collection of broken transverse surfaces into a transverse surface that only has vertical boundary. If in addition every orbit meets one of the broken transverse surfaces in finite time, then the glued up transverse surface would be a Birkhoff section.

For a given pseudo-Anosov flow $\phi$, we construct two types of broken transverse surfaces out of the combinatorial data of an associated veering triangulation $\Delta$. 

The first type of surfaces are helicoids. We construct these from winding edge paths, which are edge paths in the universal cover $\widetilde{\Delta}$ that wind around an orbit. The vertical boundary of such a helicoidal surface is the orbit which the edge path winds around, while the horizontal boundary is the collection of edges in the edge path (see \Cref{fig:helicoid}). It is here that the edge placements mentioned in the previous subsection come up. We have to argue that we can arrange for this winding behavior to occur where we expect it to. This is done in a fairly technical trace through Landry-Minsky-Taylor's work in \Cref{sec:LMT}.

The second type of surfaces comes from the shearing decomposition of veering triangulations, introduced by Schleimer and Segerman in \cite{SS22a}. These surfaces are obtained by putting together faces of the triangulation, and essentially have no vertical boundary. See \Cref{subsec:veeringsolidtori} for details. 

For the proof of \Cref{thm:closedBirkhoffsection}, we first choose sufficiently many surfaces of the second type in order to intersect all the orbits, then show that we can construct two helicoids with matching horizontal boundary. This allows us to glue the surfaces up into an immersed Birkhoff section. A standard trick of Fried \cite{Fri83} then allows us to resolve the self-intersections and get a honest Birkhoff section. The boundary of this Birkhoff section comes from the vertical boundary of the two helicoids, hence consists of exactly two closed orbits.

In our construction, we will also keep track of the complexity of the various objects involved, so as to obtain the explicit bounds in \Cref{thm:closedBirkhoffsection}. The reader who is ultimately only interested in \Cref{thm:twoboundarycomponents} can skip these parts.

\subsection{Outline of the paper}

In \Cref{sec:background}, we recall some background knowledge about pseudo-Anosov flows, Birkhoff sections, and veering triangulations. In \Cref{sec:LMT}, we recall the construction in \cite{LMT21} of the veering triangulation associated to a pseudo-Anosov flow, explaining how we can arrange for winding edge paths along the way. In \Cref{sec:closedorbits}, we recall work in \cite{LMT21} on encoding closed orbits of the flow using the dual graph and flow graph of the veering triangulation. This is so that we can define the flow graph complexity of closed orbits and establish some lemmas for keeping track of complexities. 

In \Cref{sec:helicoid}, we explain how to construct the helicoidal broken transverse surfaces as mentioned above. The construction goes through objects which we call edge sequences. These lift up to winding edge paths in the universal cover which bound the desired helicoids. In \Cref{sec:Birkhoffsection}, we recall the shearing decomposition, and we prove \Cref{thm:cuspedBirkhoffsection} and \Cref{thm:closedBirkhoffsection}. In \Cref{sec:questions}, we include some extra discussion of our theorems and present some future questions coming out of this paper. 

{\bf Acknowledgements.} I would like to thank Ian Agol and Michael Landry for their support and encouragement. I would like to thank Pierre Dehornoy, Th\'eo Marty, Saul Schleimer, Henry Segerman, and Mario Shannon for enlightening conversations. I would like to thank Michael Landry, Yair Minsky, and Sam Taylor for allowing me to reproduce some of the figures in \cite{LMT21}. Finally, I would like to thank the anonymous referee for their careful reading of the paper and for their helpful suggestions.

{\bf Notational conventions.} Throughout this paper, 
\begin{itemize}
    \item $X \cut Y$ will denote the metric completion of $X \backslash Y$ with respect to the induced path metric from $X$. In addition, we will call the components of $X \cut Y$ the complementary regions of $Y$ in $X$.
    \item $\widetilde{X}$ will denote the universal cover of $X$, unless otherwise stated.
    \item Suppose $\alpha$ is a path, then $-\alpha$ will denote the path traversed in the opposite direction.
    \item Suppose $\alpha$ and $\beta$ are paths, where the ending point of $\alpha$ equals to the starting point of $\beta$, then $\alpha * \beta$ will denote the concatenated path obtained by traversing $\alpha$ then $\beta$.
    \item Suppose $\mathcal{C}$ is a collection of sets, then $\bigcup \mathcal{C}$ will denote the union over all elements of $\mathcal{C}$.
\end{itemize}

\section{Background} \label{sec:background}

\subsection{Pseudo-Anosov flows} \label{subsec:pAflow}

We recall the definition of a pseudo-Anosov flow.

\begin{defn} \label{defn:phorbit}
Let $n \geq 2$ be an integer. Let $p_n: \mathbb{R}^2 \to \mathbb{R}^2$ be the map defined by identifying $\mathbb{R}^2 \cong \mathbb{C}$ and sending $z$ to $z^{\frac{n}{2}}$. When $n$ is odd, one has to choose a branch; any choice here would work. Consider the foliations of $\mathbb{R}^2$ by vertical and horizontal lines respectively. Let $l_n^s, l_n^u$ be the singular foliations of $\mathbb{R}^2$ obtained by pulling back these foliations under $p_n$ respectively. We refer to lifts of the quadrants in $\mathbb{R}^2$ under $p_n$ as \textit{quadrants} as well.

Let $\lambda>1$. Consider the map $\begin{bmatrix} \lambda & 0\\ 0 & \lambda^{-1} \end{bmatrix}: \mathbb{R}^2 \to \mathbb{R}^2$. Let $\phi_{n,0,\lambda}:\mathbb{R}^2 \to \mathbb{R}^2$ be the lift of this map over $p_n$ that preserves the quadrants. Let $\phi_{n,k,\lambda}: \mathbb{R}^2 \to \mathbb{R}^2$ be the composition of $\phi_{n,0,\lambda}$ and rotation by $\frac{2\pi k}{n}$ anticlockwise. Since $\begin{bmatrix} \lambda & 0\\ 0 & \lambda^{-1} \end{bmatrix}$ preserves the foliations of $\mathbb{R}^2$ by vertical and horizontal lines respectively, $l_n^s$ and $l_n^u$ are preserved by $\phi_{n,k,\lambda}$. We depict the dynamics of $\phi_{n,0,\lambda}$ for $n=3$ in \Cref{fig:phorbit}. 

\begin{figure}
    \centering
    \resizebox{!}{4.5cm}{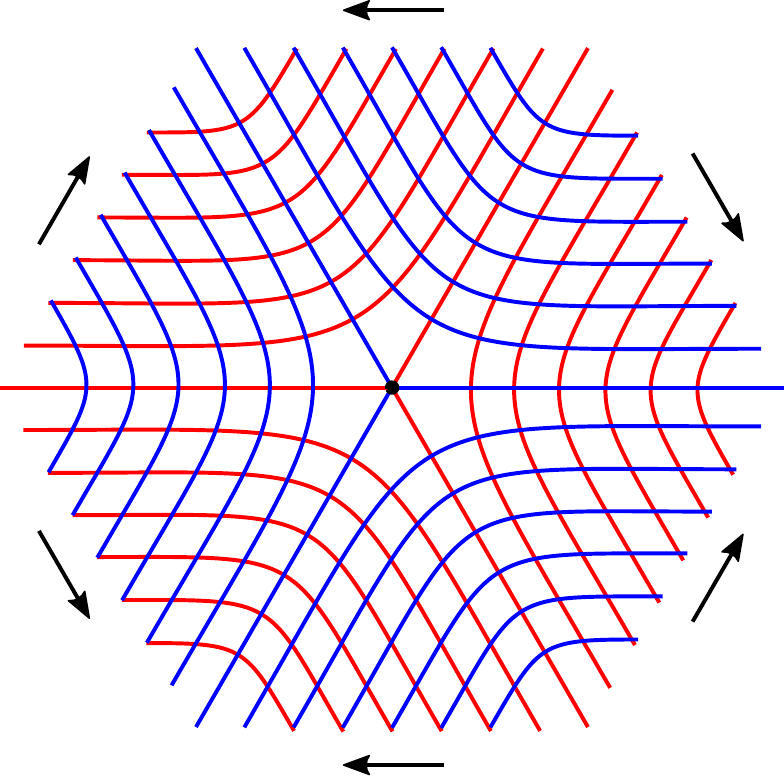}
    \caption{The dynamics of $\phi_{n,0,\lambda}$ for $n=3$.}
    \label{fig:phorbit}
\end{figure}

Let $\Phi_{n,k,\lambda}$ be the mapping torus of $\phi_{n,k, \lambda}$, let $\Lambda^s, \Lambda^u$ be the suspensions of $l^s_n, l^u_n$ respectively, and consider the suspension flow on $\Phi_{n,k, \lambda}$. Call the suspension of the origin the \textit{singular orbit} of $\Phi_{n,k,\lambda}$.
\end{defn}

\begin{defn} \label{defn:pAflow}
A \textit{pseudo-Anosov flow} on a closed 3-manifold $M$ is a $C^1$-flow $\phi_t$ satisfying:
\begin{itemize}
    \item There is a finite collection of closed orbits $\{\gamma_1, ..., \gamma_s \}$, called the \textit{singular orbits}, such that $\phi_t$ is smooth away from the singular orbits.
    \item There is a path metric $d$ on $M$, which is induced from a Riemannian metric $g$ away from the singular orbits.
    \item Away from the singular orbits, there is a splitting of the tangent bundle into three $\phi_t$-invariant line bundles $TM=E^s \oplus E^u \oplus T\phi_t$, such that $$|d\phi_t(v)| < C \lambda^{-t} |v|$$ for every $v \in E^s, t>0$, and $$|d\phi_t(v)| < C \lambda^t |v|$$ for every $v \in E^u, t<0$, for some $C, \lambda>1$.
    \item Each singular orbit $\gamma_i$ has a neighborhood $N_i$ and a map $f_i$ sending $N_i$ to a neighborhood of the singular orbit in $\Phi_{n_i, k_i, \lambda}$, for some $n_i \geq 3$, such that $f_i$ is bi-Lipschitz on $N_i$ and smooth away from $\gamma_i$, preserves the orbits, and sends $E^s, E^u$ to line bundles tangent to $\Lambda^s, \Lambda^u$ respectively. In this case, we say that $\gamma_i$ is \textit{$n_i$-pronged}. By extension, we also say that a non-singular orbit is \textit{$2$-pronged}.
\end{itemize}

We call the (possibly singular) foliation which is tangent to $E^s \oplus T\phi_t$ away from the singular orbits and given by the image of $\Lambda^s \subset \Phi_{n_i,k_i,\lambda}$ under $f_i$ near the singular orbits the \textit{stable foliation} $\Lambda^s$. We define the \textit{unstable foliation} $\Lambda^u$ similarly.

An \textit{Anosov flow} is a pseudo-Anosov flow without singular orbits.
\end{defn}

\begin{defn} \label{defn:transitive}
A flow on a closed 3-manifold $M$ is said to be \textit{transitive} if it has an orbit that is dense in $M$.  
\end{defn}

\begin{defn} \label{defn:orbitequivalence}
Let $\phi_i$ be a flow on a 3-manifold $M_i$ for $i=1,2$. We say that $\phi_1$ is \textit{orbit equivalent} to $\phi_2$ if there is a homeomorphism $h:M_1 \to M_2$ sending orbits of $\phi_1$ to orbits of $\phi_2$ in an orientation preserving way (but not necessarily preserving the parametrizations by the flows). In this case we say that $h$ is an \textit{orbit equivalence}.
\end{defn}

We next recall the definition of no perfect fits. This was introduced by Fenley in \cite{Fen99} and slightly generalized in \cite{AT22}. Here we use the generalized definition.

\begin{defn} \label{defn:perfectfit}
Let $\phi$ be a pseudo-Anosov flow on a closed 3-manifold $M$, and let $\mathcal{C}$ be a nonempty finite collection of closed orbits of $\phi$ which includes all the singular orbits of $\phi$. Lift $\phi$ up to a flow $\widetilde{\phi}$ on the universal cover $\widetilde{M}$. Let $\widetilde{\mathcal{C}}$ be the set of orbits of $\widetilde{\phi}$ which cover the orbits in $\mathcal{C}$.

Let $\mathcal{O}$ be the space of orbits of $\widetilde{\phi}$, endowed with the quotient topology. We refer to $\mathcal{O}$ as the \textit{orbit space} of $\phi$. It is shown in \cite[Proposition 4.2]{FM01} that $\mathcal{O}$ is homeomorphic to $\mathbb{R}^2$, and the images of $\Lambda^s, \Lambda^u$ under the projection $\widetilde{M} \to \mathcal{O}$ are two (possibly singular) 1-dimensional foliations $\mathcal{O}^s, \mathcal{O}^u$ respectively.

A \textit{perfect fit rectangle} is a rectangle-with-one-ideal-vertex properly embedded in $\mathcal{O}$ such that the restrictions of $\mathcal{O}^s$ and $\mathcal{O}^u$ to the rectangle foliate it as a product, i.e. conjugate to the foliations of $[0,1]^2 \backslash \{(1,1)\}$ by vertical and horizontal lines. See \Cref{fig:perfectfitdefn}. 

\begin{figure}
    \centering
    \resizebox{!}{3cm}{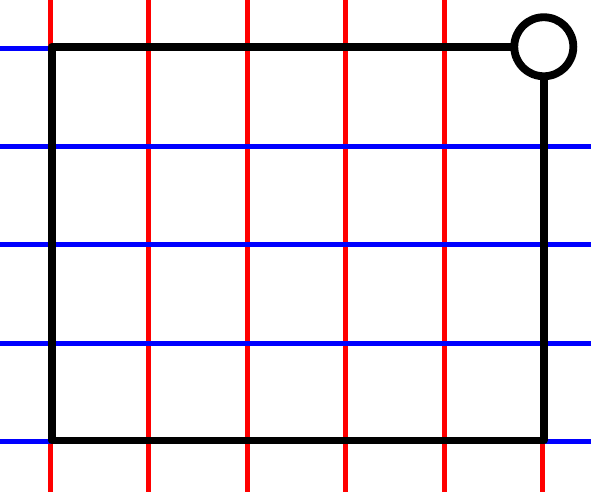}
    \caption{A perfect fit rectangle.}
    \label{fig:perfectfitdefn}
\end{figure}

We say that $\phi$ \textit{has no perfect fits relative to $\mathcal{C}$} if every perfect fit rectangle in $\mathcal{O}$ intersects $\widetilde{\mathcal{C}}$.

If $\phi$ has no perfect fits relative to the collection of singular orbits, then we say that $\phi$ \textit{has no perfect fits}.
\end{defn}

\begin{constr} \label{constr:GFsurgery}
Let $\phi$ be a pseudo-Anosov flow on a closed 3-manifold $M$ and let $\gamma$ be a closed orbit of $\phi$. Let $N(\gamma)$ be a small tubular neighborhood of $\gamma$. The leaf of the restricted foliation $\Lambda^s|_{N(\gamma)}$ containing $\gamma$ intersects $\partial N(\gamma)$ in a collection of closed curves. Let $l$ be the union of all these curves. 

Let $s$ be a slope on $\partial N(\gamma)$ such that $|\langle s,l \rangle| \geq 2$, and let $M'$ be the closed 3-manifold obtained by Dehn filling $M \backslash N(\gamma)$ along $s$. Then there exists a pseudo-Anosov flow $\phi'$ on $M'$ with a closed orbit $\gamma'$ isotopic to the core of the Dehn filling, such that $\phi$ restricted to $M \backslash \gamma$ is orbit equivalent to $\phi'$ restricted to $M' \backslash \gamma'$. Moreover, $\gamma'$ will be $|\langle s,l \rangle|$-pronged. 

Such a construction is commonly known in the literature as \textit{Goodman-Fried surgery}. The history behind this is rather interesting. Goodman, in \cite{Goo83}, introduced a way of performing this construction: one excises a round handle neighborhood of $\gamma$ and inserts another round handle neighborhood which achieves the correct filling slope $s$ to get $\phi'$. Fried, in \cite{Fri83}, introduced another way of performing this construction: one `blows up' the flow along $\gamma$ and collapses the torus boundary component along the filling slope $s$ to get $\phi'$. It was assumed early on that the flows produced by the two methods are orbit equivalent, but it was later realized that there was no rigorous proof of this. It was only recently shown by Shannon in his thesis \cite{Sha20} that this is indeed the case when $\phi$ is transitive, and even now, it is still open whether this is true when $\phi$ is not transitive. See \cite{Sha20} for a more in-depth discussion. For the purposes of this paper, one can just choose their preferred way of performing the surgery when we say perform Goodman-Fried surgery.
\end{constr}

\begin{prop} \label{prop:noperfectfitexists}
Let $\phi$ be a transitive pseudo-Anosov flow on a closed 3-manifold $M$. There exists a collection of orbits $\mathcal{C}$ such that $\phi$ is without perfect fits relative to $\mathcal{C}$. In fact, $\mathcal{C}$ can be chosen to be the set of singular orbits and one other orbit.
\end{prop}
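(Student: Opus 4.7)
My plan is to find a single closed orbit $\gamma$ so that $\mathcal{C} := \{\text{singular orbits}\} \cup \{\gamma\}$ works. If $\phi$ has no perfect fits at all, then any closed orbit suffices (one exists by density of periodic orbits for transitive pseudo-Anosov flows), so assume $\phi$ has perfect fits.

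The first observation is that, by transitivity of $\phi$, the $\pi_1(M)$-action on $\mathcal{O}$ admits a dense orbit: a dense flow orbit $O \subset M$ lifts to a single point $o \in \mathcal{O}$, and for any nonempty open $V \subset \mathcal{O}$, the flow-saturated preimage of $V$ in $\widetilde{M}$ projects to a nonempty open subset of $M$ which $O$ visits, so a suitable deck translate of $o$ lies in $V$. A standard consequence is that for every nonempty open $U \subset \mathcal{O}$, the translate $\pi_1(M) \cdot U$ is dense in $\mathcal{O}$, so its image in $M$ --- the \emph{shadow} $S_U$ --- is an open, dense, flow-invariant subset.

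The next ingredient is the structural statement that for a transitive pseudo-Anosov flow, the set of perfect fit rectangles in $\mathcal{O}$ modulo the $\pi_1(M)$-action is finite. This is expected from the finiteness of non-separated leaf pairs in the leaf space of $\mathcal{O}^s$ (in the spirit of Fenley's analyses), since each perfect fit is essentially determined by such a pair. Granting this, let $R_1, \ldots, R_n$ be representatives with shadows $S_1, \ldots, S_n$, and set $V := \bigcap_{i=1}^n S_i$: an open, dense, flow-invariant subset containing $O$. Applying the Anosov Closing Lemma to a long segment of $O$ that returns close to its start, we obtain a closed orbit $\gamma$ shadowing that segment precisely enough to lie in $V$.

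For any perfect fit rectangle $R$, write $R = g \cdot R_i$ for some $g \in \pi_1(M)$; the containment $\gamma \subset S_i$ places some lift of $\gamma$ in $R_i$, whose $g$-translate lies in $R$. Hence $\widetilde{\mathcal{C}}$ meets every perfect fit. The main obstacle is the structural claim on perfect fit classes; should only countable finiteness be available, the argument should adapt by noting that each $S_i$ has full measure with respect to the measure of maximal entropy (by ergodicity and full support), so $V$ has full measure, and one can try to produce a closed orbit in $V$ via equidistribution of closed orbits combined with careful shadowing.
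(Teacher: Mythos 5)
Your overall strategy (find one well-distributed closed orbit $\gamma$ and take $\mathcal{C}$ to be the singular orbits plus $\gamma$) is the same as the paper's, but the argument has a genuine gap at exactly the point you flag as the ``main obstacle,'' and the proposed fallbacks do not close it. First, the claim that there are finitely many perfect fit rectangles modulo $\pi_1(M)$ is false as stated: a perfect fit rectangle is only required to be a properly embedded product-foliated rectangle-with-one-ideal-vertex, so any perfect fit pair of half-leaves supports an uncountable nested family of such rectangles shrinking toward the ideal corner, and these are certainly not all in one $\pi_1(M)$-orbit. Even if you retreat to ``finitely many perfect fit \emph{pairs} of half-leaves mod $\pi_1(M)$'' (itself a nontrivial structural assertion you would have to prove, not quote), hitting one representative rectangle per class does not hit the thinner nested rectangles with the same ideal corner, which is what the definition demands. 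Second, the measure-theoretic fallback fails: a countable intersection of open dense (or full-measure) sets is merely residual/full measure, and such a set need not contain any closed orbit at all --- closed orbits form a countable, measure-zero family, and equidistribution statements do not let you place a specific closed orbit inside a prescribed full-measure set.

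The paper's proof circumvents both problems. It first reduces from perfect fit rectangles to \emph{lozenges}: assuming some perfect fit rectangle misses $\widetilde{\{\gamma\}}$, one performs Goodman--Fried surgery on $\gamma$, transfers the rectangle to the surgered flow, applies Fenley's result that a perfect fit forces a lozenge, and transfers back to get a lozenge $L$ in $\mathcal{O}$ disjoint from $\widetilde{\{\gamma\}}$. The point of this reduction is that a lozenge is a ``maximal'' product region with a non-ideal corner orbit $\alpha$, so a full sector of a uniformly-sized transverse polygon at $\alpha$ projects \emph{into} $L$; this is what defeats the thinness problem that kills the direct approach on perfect fit rectangles. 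The second ingredient is quantitative: by compactness one fixes $\epsilon>0$ so that every sector of these transverse polygons captures all orbits through some $\epsilon$-ball, and by transitivity plus the shadowing/closing lemma one chooses $\gamma$ to be $\epsilon$-dense in $M$. Then $\gamma$ must enter the sector at $\alpha$, producing a lift of $\gamma$ inside $L$ and a contradiction. If you want to salvage your approach, you would essentially have to rebuild this: either prove the finiteness of perfect fit pairs mod $\pi_1(M)$ \emph{and} handle the nested-rectangle issue (e.g.\ via a deck transformation contracting toward the ideal corner), or switch to the lozenge reduction, at which point you are doing the paper's proof.
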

\begin{proof}
We first make the following definition:

\begin{defn} \label{defn:transversepolygon}
Recall the notation in \Cref{defn:phorbit}. The preimage of the square $[-1,1]^2$ under $p_n$ is a $2n$-gon. We call the copy of this $2n$-gon in a fiber of the mapping torus $\Phi_{n,k,\lambda}$ a \textit{standard transverse $2n$-gon}. 

Now suppose $x \in M$. Suppose $f$ is a homeomorphism sending a neighborhood of $x$ to a neighborhood in $\Phi_{n,k,\lambda}$ containing a standard transverse $2n$-gon $R$ such that:
\begin{itemize}
    \item $f$ preserves the foliations $\Lambda^s$ and $\Lambda^u$
    \item $f$ sends $x$ to a point $y \in R$
\end{itemize}
Then we call the preimage $f^{-1}(R)$ a \textit{transverse polygon} at $x$. The leaves of $l^s_n$ and $l^u_n$ that contain $y$ divide $R$ into certain regions. We call the preimage of one of these regions that contains $y$ a \textit{sector} of the transverse polygon.

Notice that we do not require $x$ or $y$ to lie on a singular orbit in this definition. Hence, for example, a transverse polygon at a point lying on a nonsingular orbit can be a $(2n \geq 6)$-gon, but in such a case the transverse polygon will still only have 4 sectors.
\end{defn}

Returning to the proof of the proposition, we fix some metric on $M$. There exists $\epsilon > 0$ such that for every $x \in M$, there is a transverse polygon $R$ at $x$ for which each sector $S$ of $R$ intersects every orbit passing through the $\epsilon$-neighborhood of some point $z \in S$. Such $\epsilon$ can be chosen for $x$ in small neighborhoods, then using compactness of $M$ we obtain a uniform $\epsilon$.

Meanwhile, by transitivity and the shadowing lemma (see \cite[Lemma 1.3]{Man98}), there exists a closed orbit $\gamma$ of $\phi$ that intersects every $\epsilon$-neighborhood in $M$. By the choice of $\epsilon$ above, we know that for every $x \in M$, there is a transverse polygon $R$ at $x$ such that $\gamma$ intersects every sector of $R$. We take $\mathcal{C}$ to be the set of singular orbits and $\gamma$, and claim that $\phi$ has no perfect fits relative to $\mathcal{C}$.

To show the claim, we recall the notion of lozenges, which were also introduced by Fenley in \cite{Fen99}.
 
\begin{defn} \label{defn:lozenge}
A \textit{lozenge} is a rectangle-with-two-opposite-ideal-vertices properly embedded in $\mathcal{O}$ such that the restrictions of $\mathcal{O}^s$ and $\mathcal{O}^u$ to the rectangle foliate it as a product, i.e. conjugate to the foliations of $[0,1]^2 \backslash \{(0,0), (1,1)\}$ by vertical and horizontal lines. See \Cref{fig:lozengedefn}.
\end{defn}

\begin{figure}
    \centering
    \resizebox{!}{3cm}{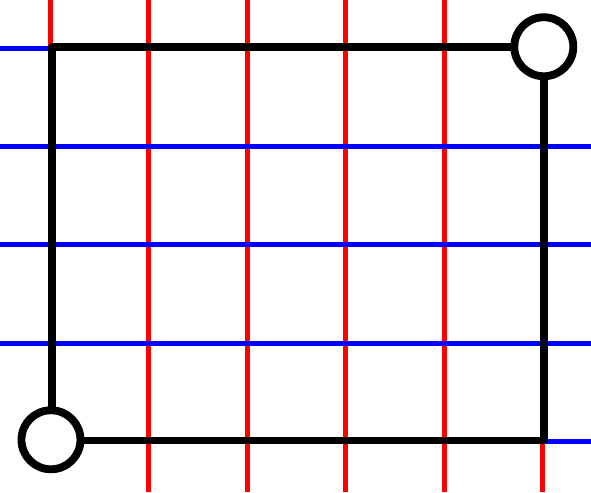}
    \caption{A lozenge.}
    \label{fig:lozengedefn}
\end{figure}

The result we need to use is the following.

\begin{prop}[{\cite[Proposition 5.5]{Fen16}}] \label{prop:perfectfitimplylozenge}
Let $\phi$ be a pseudo-Anosov flow on a closed 3-manifold $M$. If the orbit space of $\phi$ contains a perfect fit rectangle then it contains a lozenge.
\end{prop}

Returning to the proof of our claim, we first perform Goodman-Fried surgery on $\mathcal{\gamma}$ to make it singular, that is, we get a pseudo-Anosov flow $\phi'$ on a closed 3-manifold $M'$ with a singular orbit $\gamma'$, such that $\phi$ restricted to $M \backslash \gamma$ is orbit equivalent to $\phi'$ restricted to $M' \backslash \gamma'$. If we let $\mathcal{O}$ and $\mathcal{O}'$ be the orbit spaces of $\phi$ and $\phi'$ respectively, then the orbit equivalence $M \backslash \gamma \cong M' \backslash \gamma'$ induces a homeomorphism $\widetilde{\mathcal{O} \backslash \widetilde{\{\gamma\}}} \cong \widetilde{\mathcal{O}' \backslash \widetilde{\{\gamma'\}}}$ which maps the lifted stable/unstable foliations of one flow to the other.

Now suppose that $\mathcal{O}$ contains a perfect fit rectangle disjoint from $\widetilde{\{\gamma\}}$, then we can lift it to $\widetilde{\mathcal{O} \backslash \widetilde{\{\gamma\}}}$, transfer it to $\widetilde{\mathcal{O}' \backslash \widetilde{\{\gamma'\}}}$, then project down to $\mathcal{O}'$ to get a perfect fit rectangle in $\mathcal{O}'$. By \Cref{prop:perfectfitimplylozenge}, $\mathcal{O}'$ contains a lozenge. We can then run the above reasoning backwards to obtain a lozenge $L$ in $\mathcal{O}$ disjoint from $\widetilde{\{\gamma\}}$. 

Let $\alpha$ be one of the (non-ideal) corners of $L$. Recall that $\alpha$ is an orbit of $\widetilde{\phi}$. Let $x$ be a point on $\alpha$. We know from above that there is a transverse polygon $R$ at the image of $x$ in $M$ such that $\gamma$ intersects every sector of $R$. Lift $R$ to $\widetilde{R} \subset \widetilde{M}$ containing $x$. One of the sectors of $\widetilde{R}$ projects down to a region contained in $L \subset \mathcal{O}$. But some element of $\widetilde{\{\gamma\}}$ lies within such a region, contradicting the fact that $L$ is disjoint from $\widetilde{\{\gamma\}}$.
\end{proof}

\begin{rmk} \label{rmk:noperfectfitimpliestransitive}
We note that, conversely, if $\phi$ is a pseudo-Anosov flow on a closed 3-manifold $M$ without perfect fits relative to some collection of closed orbits $\mathcal{C}$, then $\phi$ must be transitive. 

To show this, we first perform Goodman-Fried surgery on orbits in $\mathcal{C}$ to make them singular, that is, we get a pseudo-Anosov flow $\phi'$ on a closed 3-manifold $M'$ with a collection of singular orbits $\mathcal{C'}$, such that $\phi$ restricted to $M \backslash \bigcup \mathcal{C}$ is orbit equivalent to $\phi'$ restricted to $M' \backslash \bigcup \mathcal{C'}$.

We claim that $\phi'$ has no perfect fits. Otherwise using the argument in the proof above, we can transfer a perfect fit rectangle in $\mathcal{O}'$ to a perfect fit rectangle in $\mathcal{O}$ that is disjoint from $\widetilde{\mathcal{C}}$, contradicting the hypothesis. Now by \cite[Corollary E]{Fen12}, $M'$ is atoroidal, and by \cite[Proposition 2.7]{Mos92a}, $\phi'$ is transitive. This implies that $\phi$ is transitive.
\end{rmk}

\begin{rmk} \label{shorterproofofnoperfectfitexists}
Using \Cref{thm:classicexistence}, one can obtain a much simpler proof of the first part of \Cref{prop:noperfectfitexists}. Indeed, the existence of a Birkhoff section, say, with boundary along a collection of closed orbits $\mathcal{C}$, implies that there exists a pseudo-Anosov flow $\phi'$ on a closed 3-manifold $M'$ with a collection of closed orbits $\mathcal{C'}$, such that $\phi'$ is the suspension flow on some pseudo-Anosov mapping torus, and such that $\phi$ restricted to $M \backslash \bigcup \mathcal{C}$ is orbit equivalent to $\phi'$ restricted to $M' \backslash \bigcup \mathcal{C'}$. 

But it is well-known that suspension flows have no perfect fits, see, for example, \cite[Theorem G]{Fen12}. Hence $\phi$ has no perfect fits relative to $\mathcal{C}$, for otherwise we can apply the argument in the proof above to transfer a perfect fit rectangle in $\mathcal{O}$ disjoint from $\widetilde{\mathcal{C}}$ to a perfect fit rectangle in $\mathcal{O}'$.

One of the drawbacks of this proof, however, is that, as pointed out in the introduction, Fried's and Brunella's proofs of \Cref{thm:classicexistence} do not offer any control over the complexity of $\mathcal{C}$. In particular, we do not know of a way of recovering the second statement of \Cref{prop:noperfectfitexists} using this proof. Another reason for using the longer proof is that we aim to offer an independent proof of \Cref{thm:classicexistence} in this paper, and so we should avoid a circular argument.
\end{rmk}

One of the convenient features of a pseudo-Anosov flow without perfect fits is the following generalization of \cite[Theorem 4.8]{Fen99}.

\begin{lemma} \label{lemma:nohomotopicorbits}
Let $\phi$ be a pseudo-Anosov flow on a closed 3-manifold $M$ without perfect fits relative to a collection of orbits $\mathcal{C}$. Suppose $\gamma_1$ and $\gamma_2$ are two closed orbits of $\phi$ which are not elements of $\mathcal{C}$. If $[\gamma_1]^{k_1} = [\gamma_2]^{k_2}$ in $\pi_1(M \backslash \bigcup \mathcal{C})$, then $\gamma_1 = \gamma_2$.
\end{lemma}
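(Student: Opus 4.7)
The plan is to reduce to the special case treated by Fenley in \cite[Theorem 4.8]{Fen99} via a Goodman-Fried surgery argument, closely parallel to the strategy used in \Cref{rmk:noperfectfitimpliestransitive}. First I would perform Goodman-Fried surgery on each non-singular orbit in $\mathcal{C}$, using \Cref{constr:GFsurgery}, to produce a new pseudo-Anosov flow $\phi'$ on a closed $3$-manifold $M'$ whose collection of singular orbits $\mathcal{C}'$ contains the images of the orbits in $\mathcal{C}$, and such that $\phi$ restricted to $M \backslash \bigcup \mathcal{C}$ is orbit equivalent to $\phi'$ restricted to $M' \backslash \bigcup \mathcal{C}'$.

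Next I would verify that $\phi'$ has no perfect fits in the absolute sense. This is exactly the argument given in \Cref{rmk:noperfectfitimpliestransitive}: a perfect fit rectangle in $\mathcal{O}'$ lifts to $\widetilde{\mathcal{O}' \backslash \widetilde{\mathcal{C}'}}$, and the induced homeomorphism $\widetilde{\mathcal{O}' \backslash \widetilde{\mathcal{C}'}} \cong \widetilde{\mathcal{O} \backslash \widetilde{\mathcal{C}}}$ coming from the orbit equivalence then produces a perfect fit rectangle in $\mathcal{O}$ disjoint from $\widetilde{\mathcal{C}}$, contradicting the hypothesis.

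Let $\gamma'_1, \gamma'_2$ denote the closed orbits of $\phi'$ corresponding to $\gamma_1, \gamma_2$ under the orbit equivalence. Since $\mathcal{C}$ contains all singular orbits of $\phi$ and $\gamma_i \notin \mathcal{C}$, the orbits $\gamma'_i$ are non-singular orbits of $\phi'$. The homeomorphism $M \backslash \bigcup \mathcal{C} \cong M' \backslash \bigcup \mathcal{C}'$ provided by the orbit equivalence turns the hypothesis $[\gamma_1]^{k_1} = [\gamma_2]^{k_2}$ in $\pi_1(M \backslash \bigcup \mathcal{C})$ into $[\gamma'_1]^{k_1} = [\gamma'_2]^{k_2}$ in $\pi_1(M' \backslash \bigcup \mathcal{C}')$, which in turn yields the same equality in $\pi_1(M')$. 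I would then invoke \cite[Theorem 4.8]{Fen99}, applied to the flow $\phi'$ without perfect fits, to conclude that $\gamma'_1 = \gamma'_2$. Pulling this equality back through the orbit equivalence gives $\gamma_1 = \gamma_2$.

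The main bookkeeping obstacle is ensuring that the orbit equivalence indeed transports the homotopy class on the nose and that Fenley's original result is formulated with enough generality to cover the case of different powers $k_1, k_2$ (rather than just $k_1 = k_2$). The content of \cite[Theorem 4.8]{Fen99} is that a covering translation fixing two distinct orbits in the orbit space produces a chain of lozenges connecting them, hence a perfect fit rectangle, so any element of $\pi_1(M')$ that preserves two lifts of non-singular orbits must preserve a single lift; this applies verbatim regardless of the powers, so no additional work is required.
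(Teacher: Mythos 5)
Your proposal is correct and follows essentially the same route as the paper: Goodman--Fried surgery on $\mathcal{C}$ to make those orbits singular, transfer of the no-perfect-fits hypothesis via the perfect-fit-rectangle transfer argument, and an appeal to \cite[Theorem 4.8]{Fen99}. The only difference is that you argue directly while the paper argues by contradiction (assuming $\gamma_1 \neq \gamma_2$ and deducing that $\phi'$ has perfect fits), which is just the contrapositive of the same application of Fenley's theorem.
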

\begin{proof}
Assume otherwise, we apply Goodman-Fried surgery on the orbits in $\mathcal{C}$ to make them singular, that is, we get a pseudo-Anosov flow $\phi'$ on a closed 3-manifold $M'$ with a collection of singular orbits $\mathcal{C'}$, such that $\phi$ restricted to $M \backslash \bigcup \mathcal{C}$ is orbit equivalent to $\phi'$ restricted to $M' \backslash \bigcup \mathcal{C'}$. $\gamma_1$ and $\gamma_2$ are sent by the orbit equivalence to closed orbits of $\phi'$ in $M' \backslash \bigcup \mathcal{C'}$, which we still call $\gamma_1$ and $\gamma_2$ respectively, such that $\gamma_1^{k_1}$ is homotopic to $\gamma_2^{k_2}$ in $M'$. By \cite[Theorem 4.8]{Fen99}, $\phi'$ must have perfect fits. But then one can transfer a perfect fit rectangle from the orbit space of $\phi'$ to that of $\phi$ as in the proof of \Cref{prop:noperfectfitexists} and obtain a contradiction.
\end{proof}

\subsection{Birkhoff sections} \label{subsec:Birkhoffsection}

\begin{defn} \label{defn:Birkhoffsection}
Let $\phi$ be a pseudo-Anosov flow on a closed 3-manifold $M$. An \textit{immersed Birkhoff section} is an immersed cooriented compact surface with boundary $S$ such that:
\begin{itemize}
    \item The interior of $S$ is positively transverse to the orbits of $\phi$
    \item The boundary of $S$ is a union of closed orbits of $\phi$.
    \item Every orbit of $\phi$ intersects $S$ in finite foward and finite backward time, that is, for every $x \in M$, there exists $t_1, t_2 > 0$ such that $\phi_{t_1}(x) \in S$ and $\phi_{-t_2}(x) \in S$.
\end{itemize}
When $M$ is oriented, we orient the boundary components of $S$ using the induced orientation on $S$. We say that a boundary component of $S$ is \textit{positive} if its orientation agrees with the flow direction, otherwise it is \textit{negative}. See \Cref{fig:Birkhoffsectionboundary}.

\begin{figure} 
    \centering
    \fontsize{40pt}{40pt}\selectfont
    \resizebox{!}{4.5cm}{
\begingroup%
  \makeatletter%
  \providecommand\color[2][]{%
    \errmessage{(Inkscape) Color is used for the text in Inkscape, but the package 'color.sty' is not loaded}%
    \renewcommand\color[2][]{}%
  }%
  \providecommand\transparent[1]{%
    \errmessage{(Inkscape) Transparency is used (non-zero) for the text in Inkscape, but the package 'transparent.sty' is not loaded}%
    \renewcommand\transparent[1]{}%
  }%
  \providecommand\rotatebox[2]{#2}%
  \newcommand*\fsize{\dimexpr\f@size pt\relax}%
  \newcommand*\lineheight[1]{\fontsize{\fsize}{#1\fsize}\selectfont}%
  \ifx\svgwidth\undefined%
    \setlength{\unitlength}{306.51822996bp}%
    \ifx\svgscale\undefined%
      \relax%
    \else%
      \setlength{\unitlength}{\unitlength * \real{\svgscale}}%
    \fi%
  \else%
    \setlength{\unitlength}{\svgwidth}%
  \fi%
  \global\let\svgwidth\undefined%
  \global\let\svgscale\undefined%
  \makeatother%
  \begin{picture}(1,0.79739748)%
    \lineheight{1}%
    \setlength\tabcolsep{0pt}%
    \put(0,0){\includegraphics[width=\unitlength,page=1]{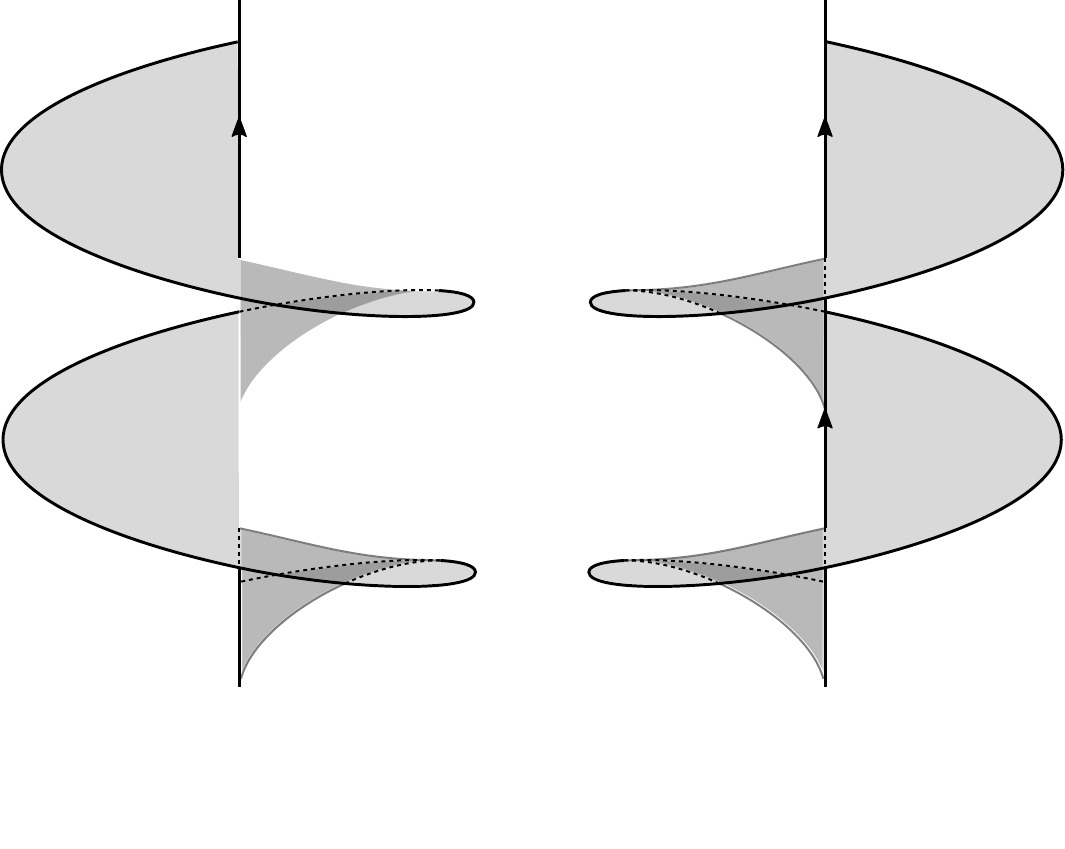}}%
    \put(0.16371587,0.02300281){\color[rgb]{0,0,0}\makebox(0,0)[lt]{\lineheight{1.25}\smash{\begin{tabular}[t]{l}$+$\end{tabular}}}}%
    \put(0.73794245,0.02300874){\color[rgb]{0,0,0}\makebox(0,0)[lt]{\lineheight{1.25}\smash{\begin{tabular}[t]{l}$-$\end{tabular}}}}%
    \put(0,0){\includegraphics[width=\unitlength,page=2]{Birkhoffsectionboundary.pdf}}%
  \end{picture}%
\endgroup%
}
    \caption{A Birkhoff section near a positive/negative boundary component.} 
    \label{fig:Birkhoffsectionboundary}
\end{figure}

Let $\mathcal{C}$ be the set of closed orbits for which some element of $\partial S$ lies along. The \textit{complexity} of $S$ is defined to be $\mathfrak{c}(S) = -\chi_{\top} (S \backslash \mathcal{C})$, where $\chi_{\top}$ denotes the Euler characteristic of the underlying space. In other words, we puncture the immersed surface at the points where it intersects its boundary and compute the negative of its Euler characteristic.

A \textit{Birkhoff section} is an immersed Birkhoff section that is embedded in its interior. Note that the complexity of a Birkhoff section is equals to negative of its Euler characteristic.
\end{defn}

Having an immersed Birkhoff section is essentially as good as having a Birkhoff section, since we have the following resolution trick, introduced by Fried in \cite{Fri83}.

\begin{constr} \label{constr:Friedresolution}
Let $\phi$ be a pseudo-Anosov flow on a closed 3-manifold $M$. Let $S$ be an immersed Birkhoff section. By a slight perturbation, we can assume that $S$ is in general position. In this case, the self-intersection set of $S$ is a graph that can be described as follows:

We call the interior points that are identified with a boundary point the \textit{cut points}, and call the interior points that are identified with two other interior points the \textit{triple points}. Then the self-intersection set of $S$ is a union of some boundary components and some curves and arcs that have endpoints on boundary components and cut points, and which are disjoint except for intersecting three at a time at triple points. We call each such curve or arc a \textit{double curve or arc}, respectively.

For each double curve or arc $l$, we cut and paste along $l$ as in \Cref{fig:Friedresolution} first row. The local picture around a cut point is as in \Cref{fig:Friedresolution} second row, and the local picture around a triple point is as in \Cref{fig:Friedresolution} third row. This results in a surface that is embedded in its interior but may not be immersed along its boundary. We call the boundary points that are not immersed the \textit{turning points}. Inductively, for each turning point, we cut and paste along the arcs that are identified on either side of the turning point as in \Cref{fig:Friedresolution} last row. We eventually get rid of all the turning points and get a Birkhoff section $S'$. We call $S'$ the \textit{Fried resolution} of $S$.

We caution that a closed orbit $\gamma$ in $\partial S$ may not be in $\partial S'$. This disappearance of boundary orbits happens exactly when the homology class of $S \cap \partial N(\gamma)$ is a multiple of the meridian, where in the last step of getting rid of the turning points, we end up collapsing the boundary components that lie on $\gamma$.

We also note that if $\partial S$ is embedded along a closed orbit $\gamma$, then the same is true for $\partial S'$. In general, the (signed) intersection number of $S \cap \partial N(\gamma)$ with the meridian at $\gamma$ is preserved under Fried resolution, since the effect of the operation on the homology class of $S \cap \partial N(\gamma)$ is summing with multiples of the meridian. 
\end{constr}

\begin{figure} 
    \centering
    \resizebox{!}{13cm}{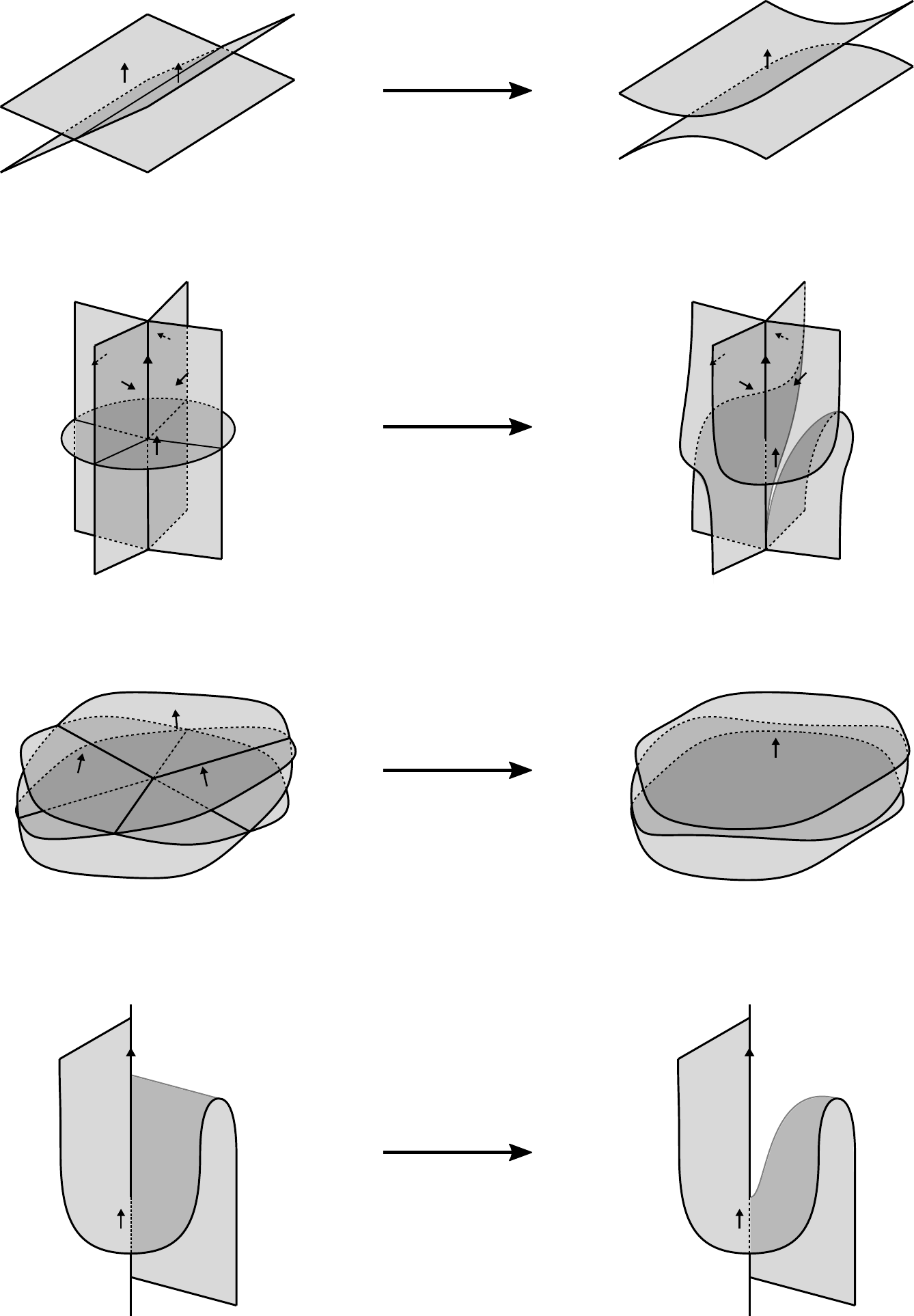}
    \caption{Performing Fried resolution on an immersed Birkhoff section.} 
    \label{fig:Friedresolution}
\end{figure}

\begin{lemma} \label{lemma:Friedresolutioncomplexity}
Let $S$ be an immersed Birkhoff section and $S'$ be its Fried resolution. Then $\mathfrak{c}(S) \geq \mathfrak{c}(S')$
\end{lemma}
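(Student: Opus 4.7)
The strategy is to track the abstract Euler characteristic and the number of cut points through the sequence of local moves in the Fried resolution. Let $n_c(S)$ denote the number of cut points of $S$. Since puncturing a single interior point lowers $\chi_\top$ by $1$, one has $\chi_\top(S \setminus \mathcal{C}) = \chi(S) - n_c(S)$, giving $\mathfrak{c}(S) = -\chi(S) + n_c(S)$. Because $S'$ is embedded in its interior it has no cut points, so $\mathfrak{c}(S') = -\chi(S')$. The desired inequality is therefore equivalent to
\[
\chi(S) - \chi(S') \leq n_c(S),
\]
and the plan is to bound the total decrease in $\chi$ during the resolution by $n_c(S)$.

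I would first verify that each local move in \Cref{fig:Friedresolution} that does not involve a cut point preserves $\chi$. Cut-and-paste along a double curve or arc at a generic point is cutting the abstract surface along two parallel copies of the curve or arc and then regluing, and each operation preserves $\chi$. At a triple point (\Cref{fig:Friedresolution}, third row) the three simultaneous cut-and-pastes along the three meeting double arcs locally replace three transverse sheets by three parallel sheets, again with the same cell count. At a turning point (\Cref{fig:Friedresolution}, bottom row) the additional cut-and-paste only rearranges boundary. None of these moves affects $\chi$ or the set of cut points.

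The cut-point moves (\Cref{fig:Friedresolution}, second row) are therefore the only ones that can alter $\chi$. At each such move a single cut point is eliminated, and direct inspection of the local model --- in which a disk meets the boundary of a half-disk transversely along a double arc ending at the cut point --- shows that $\chi$ changes by at most $1$. Summing over all $n_c(S)$ cut points yields the required bound $\chi(S) - \chi(S') \leq n_c(S)$.

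The main subtlety I expect is the boundary-collapse phenomenon noted in \Cref{constr:Friedresolution}: when the homology class of $S \cap \partial N(\gamma)$ is a multiple of the meridian, an entire boundary component of $S$ is capped off during the turning-point step. Capping off a boundary component increases $\chi$ by $1$ and therefore only helps the inequality, but one must check that this global phenomenon is consistent with the local per-cut-point book-keeping. I would handle this by performing the cap-off at the very end of the analysis, after all other moves have been accounted for, so that it can be treated as a single operation that can only decrease the complexity further.
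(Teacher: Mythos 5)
Your proposal is correct and follows essentially the same two-stage accounting as the paper: the cut-and-paste along the double locus changes $\chi$ only at the cut points (by exactly $-1$ at each, so the drop is precisely $n_c(S)$), and the turning-point stage can only cap off boundary components, which increases $\chi$ and strengthens the inequality. The paper makes your ``direct inspection of the local model'' rigorous by puncturing $S$ at the cut and triple points and invoking additivity of the index of punctured surfaces with corners under cutting along $I$, which yields the first stage as the exact identity $\mathfrak{c}(S)=-\chi_{\top}(S_1)$; if you carry out your version, organize the count per double arc (each arc contributes $0$, $-1$, or $-2$ according to how many of its endpoints are cut points) rather than treating the cut-point move as a separate local move.
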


To explain this proof, we make the following definition.

\begin{defn} \label{defn:surfacewithcorners}
A \textit{surface with corners} is a surface with boundary $S$ along with a finite collection of points on $\partial S$, which we call the \textit{corners}. The complementary regions of the corners in $\partial S$ are called the \textit{sides}. A \textit{punctured surface with corners} is a surface with corners $S$ with a finite collection of interior points and corners removed.

The \textit{index} of a surface with corners $S$ is defined to be $\ind(S) = \chi_{\top}(S) - \frac{1}{4} \text{\# corners}$. If we remove $x$ interior points and $y$ corners from $S$ to get a punctured surface with corners $S'$, then the \textit{index} of $S'$ is defined to be $\ind(S') = \ind(S)-x-\frac{1}{4} y$.
\end{defn}

Note that the index is additive, in the sense that if a punctured surface with corners $S$ is divided into $S'$ by a collection of disjoint curves and properly embedded arcs, then $\ind(S)=\ind(S')$.

\begin{rmk} \label{rmk:indexhypmetric}
A good motivation and mnemonic for the definition of the index comes from the Gauss-Bonnet formula: If a surface with corners $S$ can be endowed with a hyperbolic metric such that the sides are geodesic and the corners form right angles, then $\ind(S)=-\frac{\text{area($S$)}}{2\pi}$. Similarly, if a punctured surface with corners $S'$ is obtained by removing interior points and corners from $S$, and if $S'$ admits a hyperbolic metric such that the sides are geodesics, the unremoved corners form right angles, and the removed points form cusps, then $\ind(S)=-\frac{\text{area($S$)}}{2\pi}$.
\end{rmk}

\begin{proof}[Proof of \Cref{lemma:Friedresolutioncomplexity}]
Let $I$ be the self-intersection set of $S$ and let $V$ be the set of cut points and triple points of $S$. Let $S_1$ be the surface obtained after cutting and pasting along the double curves and arcs of $S$, and let $I_1$ and $V_1$ be the images of $I$ and $V$ in $S_1$ respectively. Notice that $S_1$ can be obtained from $S$ by cutting along $I$ and gluing back the pieces in a different way.

Now observe that $I \backslash V$ is a collection of curves and properly embedded arcs in the punctured surface $S \backslash V$ that divides it into punctured surface with corners $(S \backslash V) \cut (I \backslash V)$. Similarly, $I_1 \backslash V_1$ is a collection of curves and properly embedded arcs in the punctured surface $S_1 \backslash V_1$ that divides it into punctured surface with corners $(S_1 \backslash V_1) \cut (I_1 \backslash V_1)$, and we have $(S \backslash V) \cut (I \backslash V)=(S_1 \backslash V_1) \cut (I_1 \backslash V_1)$. Hence $\ind(S \backslash V)=\ind(S_1 \backslash V_1)$, and $$\mathfrak{c}(S)=-\ind(S \backslash V)-\text{\# triple points} = -\ind(S_1 \backslash V_1)-\text{\# triple points}=-\chi_{\top}(S_1).$$

Meanwhile, $S'$ is obtained from $S_1$ by resolving the turning points on $\partial S_1$. This changes the topology of $S_1$ by possibly collapsing some boundary components. This collapsing happens when a boundary component of $S_1$ is immersed along a closed orbit via a null-homotopic map; after resolving turning points on this boundary component, it is mapped to a single point in $M$. Topologically, this is equivalent to filling in certain boundary components of $S_1$ by discs, hence 
$$\mathfrak{c}(S)=-\chi_{\top}(S_1) \geq -\chi_{\top}(S') = \mathfrak{c}(S').$$
\end{proof}

As mentioned in the introduction, the way we will construct Birkhoff sections in this paper is to assemble multiple pieces of surfaces. To that end, we define the notion of broken transverse surfaces.

\begin{defn} \label{defn:brokentransversesurface}
Let $\phi$ be a pseudo-Anosov flow on a closed 3-manifold $M$. A \textit{broken transverse surface} is an immersed cooriented surface with corners $S$ such that:
\begin{itemize}
    \item The interior of $S$ is positively transverse to the orbits of $\phi$.
    \item Every boundary component $\alpha$ of $S$ has an even number of corners. The sides along $\alpha$ lie along closed orbits or are transverse to the orbits of $\phi$ alternatingly. 
\end{itemize}
We denote the union of sides of $S$ that lie along closed orbits by $\partial_v S$ and call it the \textit{vertical boundary} of $S$. We denote the union of sides of $S$ that are transverse to orbits of $\phi$ by $\partial_h S$ and call it the \textit{horizontal boundary} of $S$.

When $M$ is oriented, we orient the sides of $S$ using the induced orientation on $S$. A side in $\partial_v S$ is said to be \textit{positive} if its orientation agrees with the flow direction, otherwise it is \textit{negative}.
\end{defn}

If $S$ is a broken transverse surface whose sides in $\partial_h S$ can be grouped together in pairs $(e_1,e_2)$ such that $e_1=-e_2$ as paths, and if every orbit of $\phi$ intersects $S$ in finite forward and backward time, then $S$ can be glued along each pair of sides to give a surface immersed in its interior. The surface may not be immersed along its boundary if the signs of the sides in $\partial_v S$ do not match up, but we can apply the last step of Fried resolution to resolve any turning points and get an immersed Birkhoff section $S'$.

Let $\{ \gamma_1,...,\gamma_k \}$ be the set of orbits for which some element of $\partial S'$ lies along. Using additivity of the index, we get the bound $\mathfrak{c}(S') \leq - \ind(S) + \sum_{i=1}^k \langle S, \gamma_i \rangle$. As in \Cref{lemma:Friedresolutioncomplexity}, strict inequality holds if we have to collapse a boundary component while resolving turning points.

\subsection{Veering triangulations} \label{subsec:vt}

We recall the definition of a veering triangulation.

\begin{defn} \label{defn:tautstructure}
An \textit{ideal tetrahedron} is a tetrahedron with its 4 vertices removed. The removed vertices are called the \textit{ideal vertices}. An \textit{ideal triangulation} of a 3-manifold $M$ is a decomposition of $M$ into ideal tetrahedra glued along pairs of faces.

A \textit{taut structure} on an ideal triangulation is a labelling of the dihedral angles by $0$ or $\pi$, such that 
\begin{itemize}
    \item Each tetrahedron has exactly two dihedral angles labelled $\pi$, and they are opposite to each other.
    \item The angle sum around each edge in the triangulation is $2\pi$.
\end{itemize}

A \textit{transverse taut structure} is a taut structure along with a coorientation on each face, such that for any 0-labelled edge in a tetrahedron, exactly one of the faces adjacent to it is cooriented inwards. A \textit{transverse taut ideal triangulation} is an ideal triangulation with a transverse taut structure.

In this paper, we will take the convention that the coorientations are always pointing upwards.
\end{defn}

\begin{defn} \label{defn:vt}
A \textit{veering structure} on a transverse taut ideal triangulation of an oriented 3-manifold is a coloring of the edges by red or blue, so that if we look at each tetrahedron with a $\pi$-labelled edge in front, the four outer $0$-labelled edges, starting from an end of the front edge and going counter-clockwise, are colored red, blue, red, blue, respectively. See \Cref{fig:veertet}.

A \textit{veering triangulation} is a transverse taut ideal triangulation with a veering structure.
\end{defn}

\begin{rmk} \label{rmk:redblue}
In this paper, as the reader would have noticed in \Cref{subsec:pAflow}, we take the convention of drawing stable foliations in red and unstable foliations in blue, which is common in literature. The reader is cautioned that this usage of red/blue has no relation with using the same colors for the edges of veering triangulations; it is simply an unfortunate coincidence, perhaps due to the fact that there are only so many conspicuous colors to the human eye. To avoid confusion, we will reserve the colors red/blue in our figures for the latter usage when both contexts are present.
\end{rmk}

\begin{figure} 
    \centering
    \fontsize{14pt}{14pt}\selectfont
    \resizebox{!}{3.5cm}{
\begingroup%
  \makeatletter%
  \providecommand\color[2][]{%
    \errmessage{(Inkscape) Color is used for the text in Inkscape, but the package 'color.sty' is not loaded}%
    \renewcommand\color[2][]{}%
  }%
  \providecommand\transparent[1]{%
    \errmessage{(Inkscape) Transparency is used (non-zero) for the text in Inkscape, but the package 'transparent.sty' is not loaded}%
    \renewcommand\transparent[1]{}%
  }%
  \providecommand\rotatebox[2]{#2}%
  \newcommand*\fsize{\dimexpr\f@size pt\relax}%
  \newcommand*\lineheight[1]{\fontsize{\fsize}{#1\fsize}\selectfont}%
  \ifx\svgwidth\undefined%
    \setlength{\unitlength}{325.82965819bp}%
    \ifx\svgscale\undefined%
      \relax%
    \else%
      \setlength{\unitlength}{\unitlength * \real{\svgscale}}%
    \fi%
  \else%
    \setlength{\unitlength}{\svgwidth}%
  \fi%
  \global\let\svgwidth\undefined%
  \global\let\svgscale\undefined%
  \makeatother%
  \begin{picture}(1,0.40856788)%
    \lineheight{1}%
    \setlength\tabcolsep{0pt}%
    \put(0,0){\includegraphics[width=\unitlength,page=1]{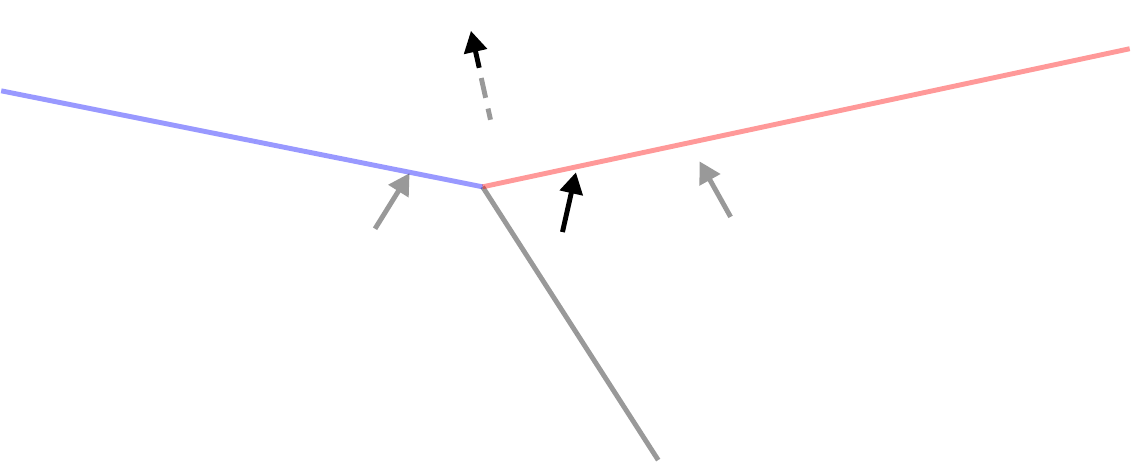}}%
    \put(0.45702311,0.36605622){\color[rgb]{0,0,0}\makebox(0,0)[lt]{\lineheight{1.25}\smash{\begin{tabular}[t]{l}$\pi$\end{tabular}}}}%
    \put(0.49904816,0.14324942){\color[rgb]{0,0,0}\transparent{0.40000001}\makebox(0,0)[lt]{\lineheight{1.25}\smash{\begin{tabular}[t]{l}$\pi$\end{tabular}}}}%
    \put(0.76168485,0.11769163){\color[rgb]{0,0,1}\makebox(0,0)[lt]{\lineheight{1.25}\smash{\begin{tabular}[t]{l}$0$\end{tabular}}}}%
    \put(0.31075696,0.09433859){\color[rgb]{1,0,0}\makebox(0,0)[lt]{\lineheight{1.25}\smash{\begin{tabular}[t]{l}$0$\end{tabular}}}}%
    \put(0.25420744,0.29133508){\color[rgb]{0,0,1}\transparent{0.40000001}\makebox(0,0)[lt]{\lineheight{1.25}\smash{\begin{tabular}[t]{l}$0$\end{tabular}}}}%
    \put(0.62556736,0.30455855){\color[rgb]{1,0,0}\transparent{0.40000001}\makebox(0,0)[lt]{\lineheight{1.25}\smash{\begin{tabular}[t]{l}$0$\end{tabular}}}}%
    \put(0,0){\includegraphics[width=\unitlength,page=2]{veertet.pdf}}%
  \end{picture}%
\endgroup%
}
    \caption{A tetrahedron in a veering triangulation. There are no restrictions on the colors of the top and bottom edges.} 
    \label{fig:veertet}
\end{figure}

We recall some basic combinatorial facts about veering triangulations.

An edge $e$ in a veering triangulation $\Delta$ is the bottom edge of a unique tetrahedron and the top edge of a unique tetrahedron. We say that these tetrahedra lie \textit{above} and \textit{below} $e$ respectively. In between these tetrahedra, on either side of $e$, there is an (a priori, possible empty) collection of tetrehedra incident to $e$, each having $e$ as a side edge. We refer to this collection as a \textit{stack} of tetrahedra on a side of $e$. \Cref{defn:fantet} and \Cref{prop:fantet} below describe the structure of these stacks.

\begin{defn} \label{defn:fantet}
A tetrahedron in $\Delta$ is called a \textit{toggle tetrahedron} if the colors on its top and bottom edges differ. It is called a \textit{red/blue fan tetrahedron} if both its top and bottom edges are red/blue respectively.

Note that some authors call toggle and fan tetrahedra \textit{hinge} and \textit{non-hinge} respectively.
\end{defn}

\begin{prop}[{\cite[Observation 2.6]{FG13}}] \label{prop:fantet}
Let $e$ be an edge in a veering triangulation $\Delta$. The stacks of tetrahedra on each side of $e$ must be nonempty. Suppose $e$ is blue/red. If there is exactly one tetrahedron in one stack, then that tetrahedron is a blue/red fan tetrahedron respectively. If there are $n>1$ tetrahedron in one stack, then going from bottom to top in that stack, the tetrahedra are: one toggle tetrahedron, $n-2$ red/blue fan tetrahedra, and one toggle tetrahedron. 
\end{prop}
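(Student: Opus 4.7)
My plan is a combinatorial analysis of the veering and taut structures around $e$, in three steps.

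First, I would observe that since the dihedral angles around $e$ sum to $2\pi$ and each is either $0$ or $\pi$, exactly two tetrahedra incident to $e$ contribute the $\pi$ angles: these are $T_{\mathrm{above}}$ (with $e$ as bottom edge) and $T_{\mathrm{below}}$ (with $e$ as top edge). The remaining tetrahedra have $e$ as a $0$-labelled side edge and form the two stacks.

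For nonemptiness of each stack, I would argue by contradiction. If a stack were empty, $T_{\mathrm{above}}$ and $T_{\mathrm{below}}$ would share a face $F$ through $e$. The R-B-R-B veering pattern on the side edges of $T_{\mathrm{above}}$, read counterclockwise with $e$ in front from below, assigns specific colors to the two non-$e$ edges of $F$ relative to the endpoints of $e$; the same pattern read in $T_{\mathrm{below}}$ with $e$ in front from above inverts the planar orientation and therefore assigns the opposite colors to those edges. This contradicts the coherence of the coloring on the shared edges, proving both stacks must be nonempty.

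For the structural description, I would order the stack members $T_1, \ldots, T_n$ from adjacent to $T_{\mathrm{below}}$ up to adjacent to $T_{\mathrm{above}}$. The taut coorientations force the shared face between consecutive stack members to be the top face of the lower one (containing its top edge) and the bottom face of the upper one; in particular the top edge of $T_i$ equals the bottom edge of $T_{i+1}$. Examining the face shared with $T_{\mathrm{below}}$: its two non-$e$ edges are side edges of $T_{\mathrm{below}}$ of differing colors, and the R-B-R-B pattern in $T_1$ pins the bottom edge of $T_1$ as the one with the \emph{same} color as $e$. Symmetrically, the top edge of $T_n$ has the same color as $e$. Each intermediate $\pi$-edge (the top of $T_i$ = bottom of $T_{i+1}$ for $1 \le i \le n-1$) is then forced to be of the \emph{opposite} color to $e$ by applying the nonemptiness step to the stacks around that edge itself: if the intermediate edge had the same color as $e$, a direct computation of the shared face would make $T_i$ and $T_{i+1}$ adjacent around it, emptying a stack. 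Combined, these give the toggle/fan/toggle pattern when $n > 1$, while in the $n = 1$ case both $\pi$-edges of the single tetrahedron are forced to be the same color as $e$, making it a fan of that color.

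The main obstacle is the orientation-sensitive bookkeeping in the last two steps: one must carefully track what `counterclockwise' means when viewpoints switch between $T_{\mathrm{above}}$ and $T_{\mathrm{below}}$ and their neighbors, and keep track of which endpoint of $e$ is connected to which top or bottom edge of each tetrahedron. Once these conventions are set up cleanly, the inductive argument for the intermediate $\pi$-edges reduces to applying the nonemptiness result at each such edge.
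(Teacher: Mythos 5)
First, a remark on the comparison: the paper does not prove this proposition at all --- it is quoted directly from \cite[Observation 2.6]{FG13} --- so your argument is being measured against the standard combinatorial proof rather than anything in the text. Your first two steps are sound in outline: the two $\pi$-angles at $e$ do come from the tetrahedra above and below $e$, and non-emptiness of the stacks does follow from the fact that a face identification must reverse orientation while the veering coloring of the two non-$e$ edges of a face through $e$ is orientation-sensitive. The bookkeeping you defer is genuinely the only content of that step, and it does close up.

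The genuine gap is in your third step, in the sentence ``in particular the top edge of $T_i$ equals the bottom edge of $T_{i+1}$.'' This is false, and in fact it contradicts the non-emptiness statement you have just established: if an edge $a$ were simultaneously the top edge of $T_i$ and the bottom edge of $T_{i+1}$, then $T_i$ and $T_{i+1}$ would be the tetrahedra below and above $a$, and the face they share --- which, being a top face of $T_i$, contains $a$ --- would leave one of the two stacks around $a$ empty. What is true is that the shared face $F_i$ has three edges: $e$, the top edge of $T_i$, and the bottom edge of $T_{i+1}$, and the latter two are \emph{distinct}; they are the two edges of $F_i$ joining its third vertex to the two different endpoints of $e$. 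Consequently there is no single ``intermediate $\pi$-edge'' to which your non-emptiness step can be applied, and the argument for the interior of the stack collapses. The repair is simpler than what you attempt: the top edge of $T_i$ is a \emph{side} edge of $T_{i+1}$ sharing an ideal vertex with $e$, and two side edges of a veering tetrahedron sharing an ideal vertex are consecutive in the red-blue-red-blue cyclic order, hence differently colored; so the top edge of $T_i$ has the color opposite to $e$ for all $i<n$, and symmetrically the bottom edge of $T_{i+1}$ (a side edge of $T_i$ adjacent to $e$) has the opposite color for all $i\geq 1$. Combined with your correct analysis of the two end faces --- which shows the bottom edge of $T_1$ and the top edge of $T_n$ have the same color as $e$ --- this gives exactly the toggle/fan/$\cdots$/fan/toggle pattern for $n>1$ and a fan of $e$'s color for $n=1$.
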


The number of tetrahedra in the stack of tetrahedra to a side of an edge is referred to as the \textit{length} of the stack.

For any veering triangulation $\Delta$, one can complete the ideal triangulation by adding in the removed vertices of the ideal tetrahedra. The resulting space will not be a manifold since the link of each vertex is never a ball. Regardless, we will denote this completed triangulation as $\overline{\Delta}$. Let $T$ be a vertex of $\overline{\Delta}$ and let $N(T)$ be a small neighborhood of $T$ in $\overline{\Delta}$. $\partial N(T)$ inherits a \textit{boundary triangulation} $\partial \Delta$, where the vertices/edges/faces of $\partial \Delta$ correspond to vertices of edges/faces/tetrahedra of $\overline{\Delta}$ at $T$ respectively. In particular, each vertex of $\partial \Delta$ inherits the color of the corresponding edge of $\Delta$, and each edge of $\partial \Delta$ inherits the coorientation of the corresponding face of $\Delta$.

As a consequence of the veering structure, the boundary triangulation has to take on a particular form. Namely, there exist edge paths $\{l_i\}_{i \in \mathbb{Z}}$ in $\partial \Delta$ such that:
\begin{itemize}
    \item The vertices along $l_{2i}$ are all colored blue, while the vertices along $l_{2i+1}$ are all colored red.
    \item The faces between $l_{2i}$ and $l_{2i+1}$ form a stack of upward pointing triangles, while the faces between $l_{2i}$ and $l_{2i+1}$ form a stack of downward pointing triangles.
\end{itemize}
See \Cref{fig:boundarytriangulation}, and see \cite[Section 2]{FG13} for explanations. In this paper, we will take the convention that the indices are increasing from left to right, when we look at $\partial \Delta$ from inside $N(T)$.

\begin{figure}
    \centering
    \resizebox{!}{5.5cm}{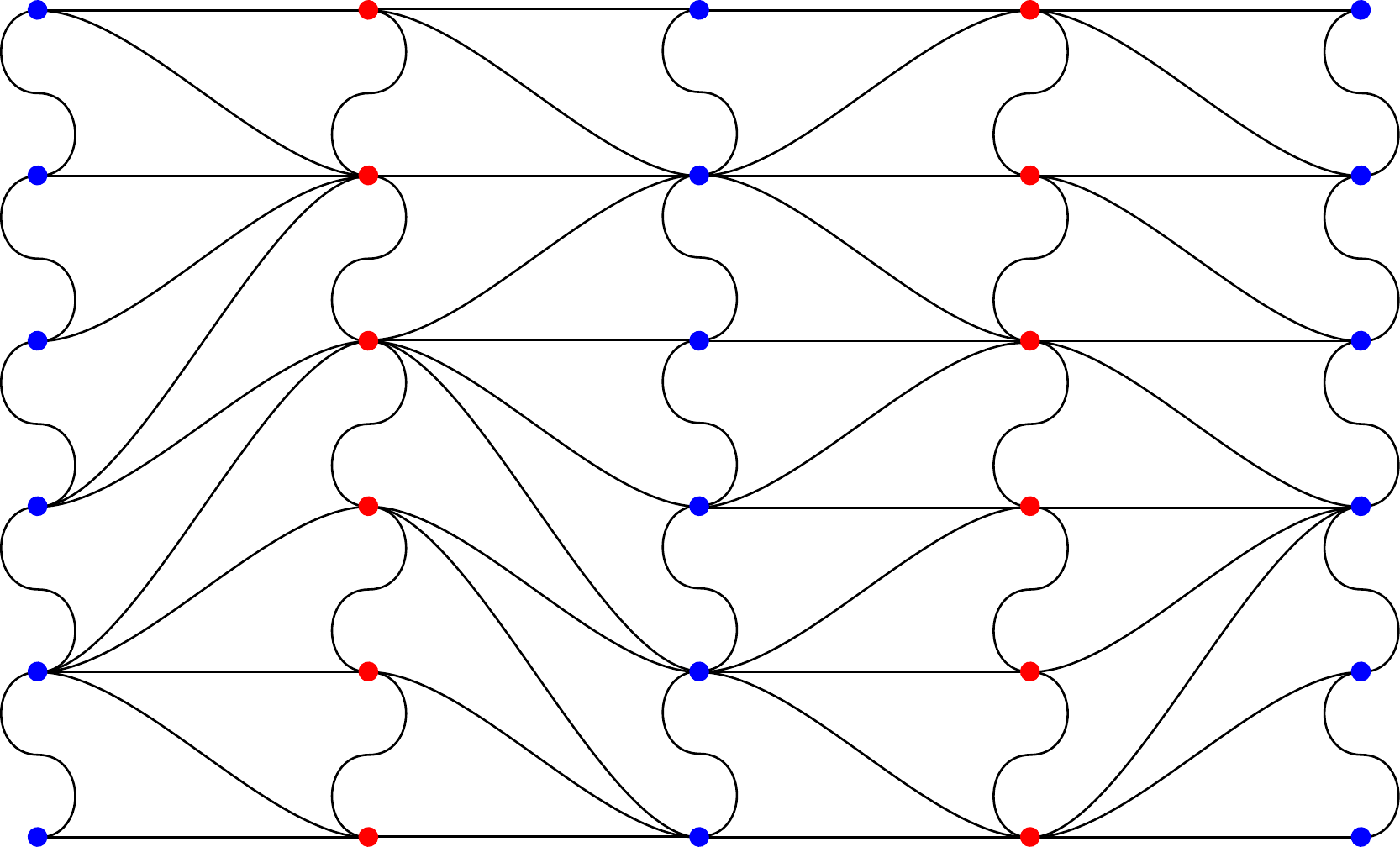}
    \caption{The boundary triangulation at a vertex of a veering triangulation.}
    \label{fig:boundarytriangulation}
\end{figure}

We orient each $l_i$ to go upwards, that is, in the direction that agrees with the coorientations on the edges of $\partial \Delta$. We call these oriented curves the \textit{ladderpole curves} at $T$. If we want to be more specific, we will call $l_i$ a \textit{blue/red ladderpole curve} if it contains only blue/red vertices respectively.

When $\overline{\Delta}$ is compact, $\partial \Delta$ is a triangulation of the torus $\partial N(T)$. In particular, there are only a finite, even number of ladderpole curves, that is, we have $l_i=l_{i+2n}$ for some $n \geq 1$, and $l_1,...,l_{2n}$ are distinct. In this case we say that $n$ is the \textit{ladderpole multiplicity} of $T$. We can complete a ladderpole curve into a basis for $H_1(\partial N(T))$ by defining a \textit{ladderpole transversal} at $T$ to be an embedded edge path $t$ of $\partial \Delta$ such that $\langle t,l_1 \rangle = 1$. Ladderpole transversals always exist: starting from $l_1$, one can move from each $l_i$ to $l_{i+1}$ (indices taken mod $n$) using one edge, and when one returns to $l_1$, one can complete the path by going up or down along $l_1$. We call a choice of a ladderpole transversal at each vertex of $\overline{\Delta}$ a \textit{system of ladderpole transversals}.

For convenience, we use the following shorthand in the rest of the paper:
\begin{itemize}
    \item We refer to a vertex of $\overline{\Delta}$ as a vertex of $\Delta$.
    \item We refer to a vertex of $\partial \Delta$ by the same name as the edge of $\overline{\Delta}$ that it corresponds to.
\end{itemize}

We make one more definition that will play a role in \Cref{subsec:veeringsolidtori}.

\begin{defn} \label{defn:quad}
Let $t$ be a tetrahedron in $\Delta$. An \textit{equatorial square} (or \textit{square} in short) in $t$ is a quadrilateral-with-4-ideal-vertices properly embedded in $t$ with its 4 sides along the side edges of $t$.
\end{defn}

To establish the bounds in \Cref{thm:cuspedBirkhoffsection} and \Cref{thm:closedBirkhoffsection}, we will need to compute bounds on the complexity of various objects along the way. To that end, we fix the notation for the following parameters of a veering triangulation $\Delta$:
\begin{itemize}
    \item $N$ will denote the number of tetrahedra in $\Delta$, which equals to the number of edges in $\Delta$.
    \item $\delta$ will denote the maximum length of the stack of tetrahedra to a side of an edge.
    \item $\nu$ will denote the maximum ladderpole multiplicity over all vertices of $\Delta$.
    \item $\lambda$ will denote the maximum length of all ladderpole curves (as edge paths of $\partial \Delta$).
\end{itemize}

\section{From pseudo-Anosov flows to veering triangulations} \label{sec:LMT}

The following theorem is one of the main results in \cite{LMT21}.

\begin{thm}[{\cite[Theorem 5.1]{LMT21}}] \label{thm:LMT}
Let $\phi$ be a pseudo-Anosov flow on an oriented closed 3-manifold $M$ without perfect fits relative to a collection of closed orbits $\mathcal{C}$. Then there exists a veering triangulation $\Delta$ on $M \backslash \bigcup \mathcal{C}$ whose 2-skeleton is positively transverse to $\phi$.
\end{thm}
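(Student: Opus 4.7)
My plan is to work in the orbit space $\mathcal{O}$ of the lifted flow on $\widetilde{M}$ and use the Agol--Gu\'eritaud construction to produce the combinatorial veering triangulation, then separately establish transversality of the $2$-skeleton. After removing $\widetilde{\mathcal{C}}$, the restrictions of $\mathcal{O}^s$ and $\mathcal{O}^u$ to $\mathcal{O} \backslash \widetilde{\mathcal{C}}$ are honestly nonsingular foliations, since $\mathcal{C}$ contains all the singular orbits. The no-perfect-fits hypothesis relative to $\mathcal{C}$ is then exactly what is needed to control the large-scale behavior of these foliations.

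The combinatorial step goes as follows. Call a \emph{rectangle} in $\mathcal{O}$ a subset product-foliated by $\mathcal{O}^s$ and $\mathcal{O}^u$; call it \emph{maximal} if each of its four open sides touches a point of $\widetilde{\mathcal{C}}$ (a ``tip'') that blocks further expansion in the perpendicular direction. No perfect fits relative to $\mathcal{C}$ guarantees that every rectangle is contained in a maximal one with tips in $\widetilde{\mathcal{C}}$, since otherwise one could complete a sub-rectangle to a perfect fit rectangle missing $\widetilde{\mathcal{C}}$. I then declare a tetrahedron for each maximal rectangle, with its four ideal vertices corresponding to the tips, its two $\pi$-edges corresponding to the pair of opposite tips on stable versus unstable sides, and its face pairings determined by the diagonal flip move that slides the interior through a new tip. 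The red/blue labeling of an edge records whether its tips lie on stable or unstable sides of the associated rectangles, and the alternating pattern around a tetrahedron is forced by the side pattern. This construction is $\pi_1(M \backslash \bigcup \mathcal{C})$-equivariant and descends to a veering triangulation $\Delta$ of the cusped manifold.

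The second, harder step is to realize $\Delta$ inside $M \backslash \bigcup \mathcal{C}$ with its $2$-skeleton positively transverse to $\phi$. Combinatorially, each edge of $\Delta$ is specified only by a pair of tips (cusps joined by a rectangle side in $\mathcal{O}$), and each face by a triple of tips bounding a sub-rectangle. My plan is to fix a concrete realization of each edge as an arc in the flow box over its pair of tips, then realize each face by a disk that projects homeomorphically onto its rectangle; since the boundary of such a disk lies in three flow boxes over the rectangle's sides, the disk can be chosen transverse to $\phi$. The obstruction is global coherence: the two realizations of a face coming from its two adjacent tetrahedra must agree as subsets of $M \backslash \bigcup \mathcal{C}$, which forces the edge placements to be chosen compatibly within each cusp neighborhood. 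I expect this to be the main difficulty; it is resolved in \cite{LMT21} by a careful inductive placement of edges along each cusp. The freedom remaining in this placement is exactly what the rest of the paper will exploit to produce winding edge paths.
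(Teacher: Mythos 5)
Your proposal is correct and follows essentially the same route as the paper's own outline in \Cref{subsec:rectangles}: the Agol--Gu\'eritaud dictionary between maximal rectangles and taut tetrahedra gives the combinatorial triangulation, and positive transversality of the $2$-skeleton is reduced to the careful, coherent choice of veering diagonals (edge placements) carried out in \cite{LMT21}, to which both you and the paper defer. The one technical adjustment is that the construction should be performed in the completed flow space $\mathcal{P}$ (the branched cover of $\mathcal{O}$ over $\widetilde{\mathcal{C}}$ restricting to the universal cover of $\mathcal{O} \backslash \widetilde{\mathcal{C}}$) rather than in $\mathcal{O}$ itself, since only $\pi_1(M)$ acts on $\mathcal{O}$, whereas one needs the genuine $\pi_1(M \backslash \bigcup \mathcal{C})$-action on $\mathcal{P}$ to identify the quotient complex with the cusped manifold and to carry out the transversality argument in $\widetilde{M^\circ}$.
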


In the sequel we will refer to such a $\Delta$ as a veering triangulation \textit{associated to $\phi$} on $M \backslash \bigcup \mathcal{C}$. In this section, we will recall the proof of the theorem, with the goal of pointing out how the technical property of having winding edge paths, as briefly explained in the introduction, can be arranged for in the proof.

\subsection{Rectangles} \label{subsec:rectangles}

We fix the following setting: Let $\phi$ be a pseudo-Anosov flow on an oriented closed 3-manifold $M$ without perfect fits relative to a collection of closed orbits $\mathcal{C}$. Let $\widetilde{\phi}$ be the lift of $\phi$ to the universal cover $\widetilde{M}$ and let $\widetilde{\mathcal{C}}$ be the set of orbits of $\widetilde{\phi}$ which cover the orbits in $\mathcal{C}$. Let $\mathcal{O}$ be the orbit space of $\phi$. 

We define the \textit{completed flow space} $\mathcal{P}$ to be the infinite branched cover of $\mathcal{O}$ over $\widetilde{\mathcal{C}}$ which restricts to the universal cover of $\mathcal{O} \backslash \widetilde{\mathcal{C}}$. We denote the branch points on $\mathcal{P}$ by $\mathcal{S}$. The foliations $\mathcal{O}^s$ and $\mathcal{O}^u$ lift to foliations $\mathcal{P}^s$ and $\mathcal{P}^u$ respectively. 

Finally, let $M^\circ = M \backslash \bigcup \mathcal{C}$. Lift the restricted flow $\phi|_{M^\circ}$ to $\widehat{\phi}$ on $\widetilde{M^\circ}$. Note that $\mathcal{P} \backslash \mathcal{S}$ is the space of orbits of $\widehat{\phi}$. In particular, $\pi_1(M^\circ)$ acts on $\mathcal{P}$, preserving the foliations $\mathcal{P}^s$ and $\mathcal{P}^u$. Let $q: \widetilde{M^\circ} \to \mathcal{P} \backslash \mathcal{S}$ be the quotient map. 

\begin{defn} \label{defn:rectangles}
A \textit{rectangle} $R$ in $\mathcal{P}$ is a rectangle embedded in $\mathcal{P}$ such that the restrictions of $\mathcal{P}^s$ and $\mathcal{P}^u$ foliate the rectangle as a product, i.e. conjugate to the foliations of $[0,1]^2$ by vertical and horizontal lines, and such that no element of $\mathcal{S}$ lies in the interior of $R$.

Let $R_1$ and $R_2$ be rectangles in $\mathcal{P}$. $R_1$ is said to be \textit{taller} than $R_2$ if every leaf of $\mathcal{P}^u$ that intersects $R_2$ intersects $R_1$. $R_1$ is said to be \textit{wider} than $R_2$ if every leaf of $\mathcal{P}^s$ that intersects $R_2$ intersects $R_1$. 

An \textit{edge rectangle} in $\mathcal{P}$ is a rectangle with two opposite corners on $\mathcal{S}$. A \textit{face rectangle} in $\mathcal{P}$ is a rectangle with one corner on $\mathcal{S}$ and the two opposite sides to the corner containing elements of $\mathcal{S}$ in their interior. A \textit{tetrahedron rectangle} in $\mathcal{P}$ is a rectangle all of whose sides contain elements of $\mathcal{S}$ in their interior. See \Cref{fig:rectangles}.

\begin{figure}
    \centering
    \resizebox{!}{3cm}{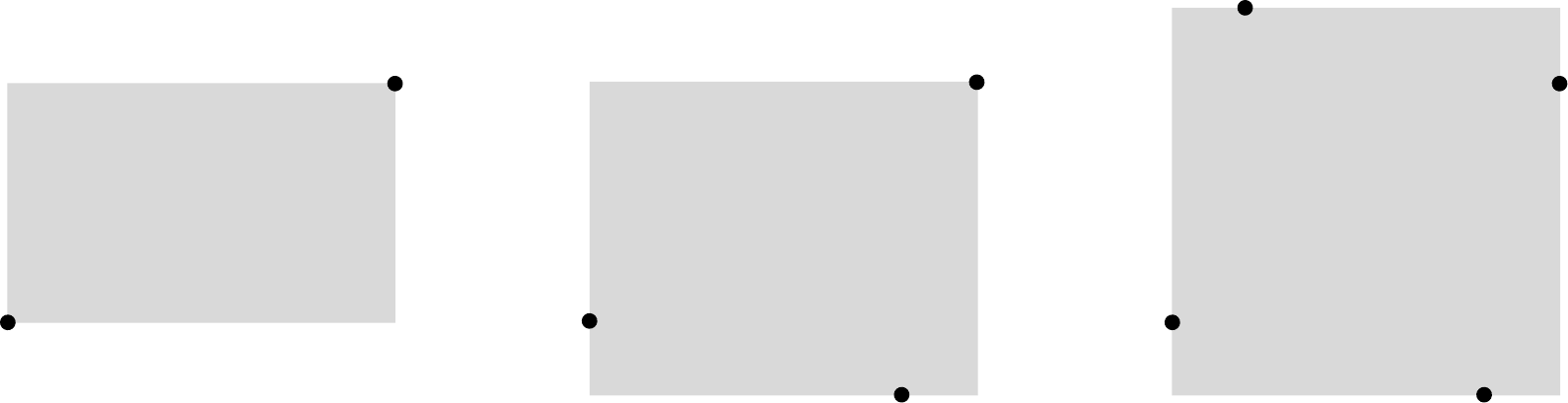}
    \caption{From left to right: an edge rectangle, a face rectangle, and a tetrahedron rectangle.}
    \label{fig:rectangles}
\end{figure}

In this paper, we will take the convention of drawing leaves of $\mathcal{P}^s$ as vertical lines and leaves of $\mathcal{P}^u$ as horizontal lines. As such, we will call the sides of a rectangle $R$ that lie along leaves of $\mathcal{P}^s$ as the \textit{vertical sides} of $R$, and the sides that lie along leaves of $\mathcal{P}^u$ as the \textit{horizontal sides} of $R$.

Note that by our assumption of no perfect fits, we could have alternatively defined a face rectangle to be a rectangle maximal with respect to the property that one corner lies on $\mathcal{S}$, and a tetrahedron rectangle to be a maximal rectangle.
\end{defn}

Recall that $M$ is oriented. This determines an orientation on $\mathcal{O}$, hence on $\mathcal{P}$, and allows us to make the following definition.

\begin{defn} \label{defn:edgerectcolor}
Let $R$ be a rectangle in $\mathcal{P}$ and let $p$ be a corner of $R$. Let $s$ be the vertical side of $R$ which contains $p$. Orient $s$ to point inwards at $p$. Similarly, let $u$ be the horizontal side of $R$ which contains $p$ and orient $u$ to point inwards at $p$. $p$ is said to be \textit{the SW or NE corner} of $R$ if (orientation of $u$, orientation of $s$) agrees with the orientation of $\mathcal{P}$ at $\gamma$. Otherwise $p$ is said to be \textit{the NW or SE corner} of $R$.

Let $R$ be an edge rectangle. We say that $R$ is \textit{red} if its SW and NE corners lie on $\mathcal{S}$. We say that $R$ is \textit{blue} if its NW and SE corners lie on $\mathcal{S}$. 
\end{defn}

\begin{defn}
Let $R$ be an edge rectangle in $\mathcal{P}$, let $p$ and $q$ be the two corners of $R$ that lie in $\mathcal{S}$. A \textit{veering diagonal} is an arc in $R$ between $p$ and $q$ that is transverse to $\mathcal{P}^s$ and $\mathcal{P}^u$.
\end{defn}

With these definitions in place, we can briefly discuss the history and recall an outline of the proof of \Cref{thm:LMT}, as a warm up to the more technical work we will do in the rest of this section. 

It was first discovered by Agol and Gu\'{e}ritaud that if $\phi$ is a pseudo-Anosov flow on an orientable closed 3-manifold $M$ without perfect fits relative to $\mathcal{C}$, then there is a veering triangulation on $M \backslash \bigcup \mathcal{C}$.

The idea of the proof is simple: Associate to each tetrahedron rectangle $R$ a taut ideal tetrahedron $t_R$. Identify the ideal vertices of $t_R$ with the elements of $\mathcal{S}$ on the sides of $R$ so that the edge rectangles contained in $R$ correspond to edges of $t_R$ and the face rectangles contained in $R$ correspond to faces of $t_R$. The upper/lower $\pi$-labelled edges of $t_R$ are the ones that correspond to edge rectangles as tall as, and as wide as $R$, respectively, and we color the edges of $t_R$ by the same color as the corresponding edge rectangle. See \Cref{fig:tetrectangle}.

\begin{figure}
    \centering
    \resizebox{!}{3.5cm}{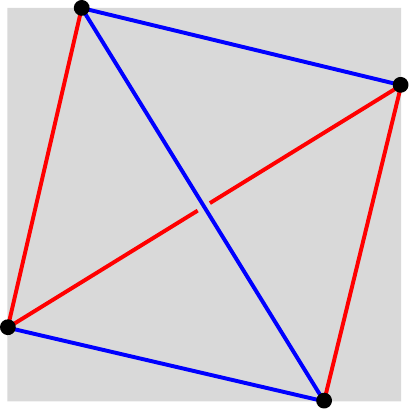}
    \caption{The boundary triangulation at a vertex of a veering triangulation.}
    \label{fig:tetrectangle}
\end{figure}

We glue along two faces of two tetrahedra whenever they correspond to the same face rectangle. This gives a 3-manifold $\widetilde{N}$ with a veering triangulation. Notice that there is a natural properly discontinuous action of $\pi_1(M^\circ)$ on $\widetilde{N}$. Let $N$ be the quotient of this action. The veering triangulation on $\widetilde{N}$ quotients down to a veering triangulation on $N$. Meanwhile, by a theorem of Waldhausen (\cite[Corollary 6.5]{Wal68}), $N$ is homeomorphic to $M \backslash \bigcup \mathcal{C}$, hence there is a veering triangulation on the latter.

An unsatisfying feature of this construction, however, is that it is unclear how the veering triangulation interacts with the flow $\phi$. This was rectified by Landry, Minsky, and Taylor in \cite{LMT21}, where they explicitly construct a homeomorphism $N \to M^\circ$ for which one can compare the image of the veering triangulation with the flow on $M$. In particular, they show that the image of the 2-skeleton is positively transverse to the flow, i.e. the flow lines are transverse to the faces of the triangulation and the flow direction agrees with the coorientation on the faces.

The idea of their construction is to first choose a veering diagonal for each edge rectangle, such that for every face rectangle $R$, the three veering diagonals in the three edge rectangles in $R$ are disjoint. This choice of veering diagonal is quite technical, and our task is to show that there is enough room in the construction so that a prescribed finite collection of edge paths can be made to be winding. Recall that this is in turn necessary for constructing the helicoidal broken transverse surfaces mentioned in the introduction.

Once this is accomplished, one defines a fibration $p:\widetilde{N} \to \mathcal{P} \backslash \mathcal{S}$ by sending the edges of the veering triangulation to the veering diagonals in the corresponding edge rectangles, then sending the faces to the regions bounded by the three veering diagonals of the corresponding face rectangles, and the tetrahedra to the regions bounded by the four outer veering diagonals of the corresponding tetrahedron rectangles. The preimages of $p$ are lines that are transverse to the 2-skeleton of the triangulation.

One then constructs a $\pi_1(M^\circ)$-equivariant map $h: \widetilde{N} \to \widetilde{M^\circ}$ such that $p = q \circ h$. In particular $h$ sends the preimages of $p$ to flow lines of $\widetilde{\phi}$, but not necessarily by homeomorphisms. Nonetheless, one can straighten out $h$ on these preimages to get a $\pi_1(M^\circ)$-equivariant homeomorphism $\widetilde{f}: \widetilde{N} \to \widetilde{M^\circ}$ such that $p = q \circ \widetilde{f}$. Quotient by $\pi_1(M^\circ)$ to get a homeomorphism $f: N \to M^\circ$. The image of the veering triangulation in $M^\circ = M \backslash \bigcup \mathcal{C}$ is the desired veering triangulation in \Cref{thm:LMT}.

\subsection{Winding edge paths} \label{subsec:edgepath}

In this subsection, we will define the technical property of winding edge paths that we want to arrange for.

\begin{defn} \label{defn:quadrant}
Let $\gamma$ be a point on $\mathcal{O}$. Recall \Cref{defn:phorbit}. There exists a neighborhood of $\gamma$ and a map from the neighborhood to $\mathbb{R}^2$ sending $\gamma$ to $0$ and sending the foliations $\mathcal{O}^s$ and $\mathcal{O}^u$ to $l^s_n$ and $l^u_n$ respectively. We call the preimage of a quadrant under the map a \textit{quadrant} at $\gamma$. 

Let $p$ be a point on $\mathcal{P}$. Suppose $p$ maps to $\gamma$ in $\mathcal{O}$. Then we call the lift of a quadrant at $\gamma$ to $p$ a \textit{quadrant} at $p$.

Recall again that $\mathcal{P}$ is oriented. If $q_1$ and $q_2$ are quadrants at $p$, we say that $q_1$ is \textit{on the left} of $q_2$ if there exists an orientation preserving local homeomorphism $\mathbb{R} \times [0,\infty)$ mapping $(0,0)$ to $p$, $(-\infty,0] \times [0,\infty)$ to $q_1$, and $[0,\infty) \times [0,\infty)$ to $q_2$. If there exists quadrants $q_1,...,q_{n+1}$ at $p$ such that $q_i$ is on the left of $q_{i+1}$ for each $i$, then we say that $q_1$ is \textit{$n$ quadrants to the left} of $q_{n+1}$.
\end{defn}

Note that if $p$ is a corner of a rectangle $R$, then $R$ determines a quadrant at $p$.

\begin{defn} \label{defn:staircase}
Let $R_1,...,R_k$ be edge rectangles such that:
\begin{itemize}
    \item There are elements $s_0,...,s_k \in \mathcal{S}$ such that $R_i$ has corners at $s_{i-1}$ and $s_i$, for each $i$.
    \item The quadrant determined by $R_{i+1}$ at $s_i$ is $2$ quadrants to the left of that determined by $R_i$, for each $i$.
\end{itemize}

Consider the set $S_k=\bigcup_{i=1}^k [i-1,i] \times [0,i]$ in $\mathbb{R}^2$. If there exists an orientation preserving embedding of $S_k$ into $\mathcal{P}$ sending $[i-1,i] \times [i-1,i]$ to $R_i$ for each $i$ and sending the foliations of $\mathbb{R}^2$ by vertical and horizontal lines to $\mathcal{P}^s$ and $\mathcal{P}^u$, we call the image of the embedding a \textit{staircase} for $R_1,..,R_k$. See \Cref{fig:staircase} left.

\begin{figure} 
    \centering
    \fontsize{14pt}{14pt}\selectfont
    \resizebox{!}{4.5cm}{
\begingroup%
  \makeatletter%
  \providecommand\color[2][]{%
    \errmessage{(Inkscape) Color is used for the text in Inkscape, but the package 'color.sty' is not loaded}%
    \renewcommand\color[2][]{}%
  }%
  \providecommand\transparent[1]{%
    \errmessage{(Inkscape) Transparency is used (non-zero) for the text in Inkscape, but the package 'transparent.sty' is not loaded}%
    \renewcommand\transparent[1]{}%
  }%
  \providecommand\rotatebox[2]{#2}%
  \newcommand*\fsize{\dimexpr\f@size pt\relax}%
  \newcommand*\lineheight[1]{\fontsize{\fsize}{#1\fsize}\selectfont}%
  \ifx\svgwidth\undefined%
    \setlength{\unitlength}{542.84997692bp}%
    \ifx\svgscale\undefined%
      \relax%
    \else%
      \setlength{\unitlength}{\unitlength * \real{\svgscale}}%
    \fi%
  \else%
    \setlength{\unitlength}{\svgwidth}%
  \fi%
  \global\let\svgwidth\undefined%
  \global\let\svgscale\undefined%
  \makeatother%
  \begin{picture}(1,0.38385025)%
    \lineheight{1}%
    \setlength\tabcolsep{0pt}%
    \put(0,0){\includegraphics[width=\unitlength,page=1]{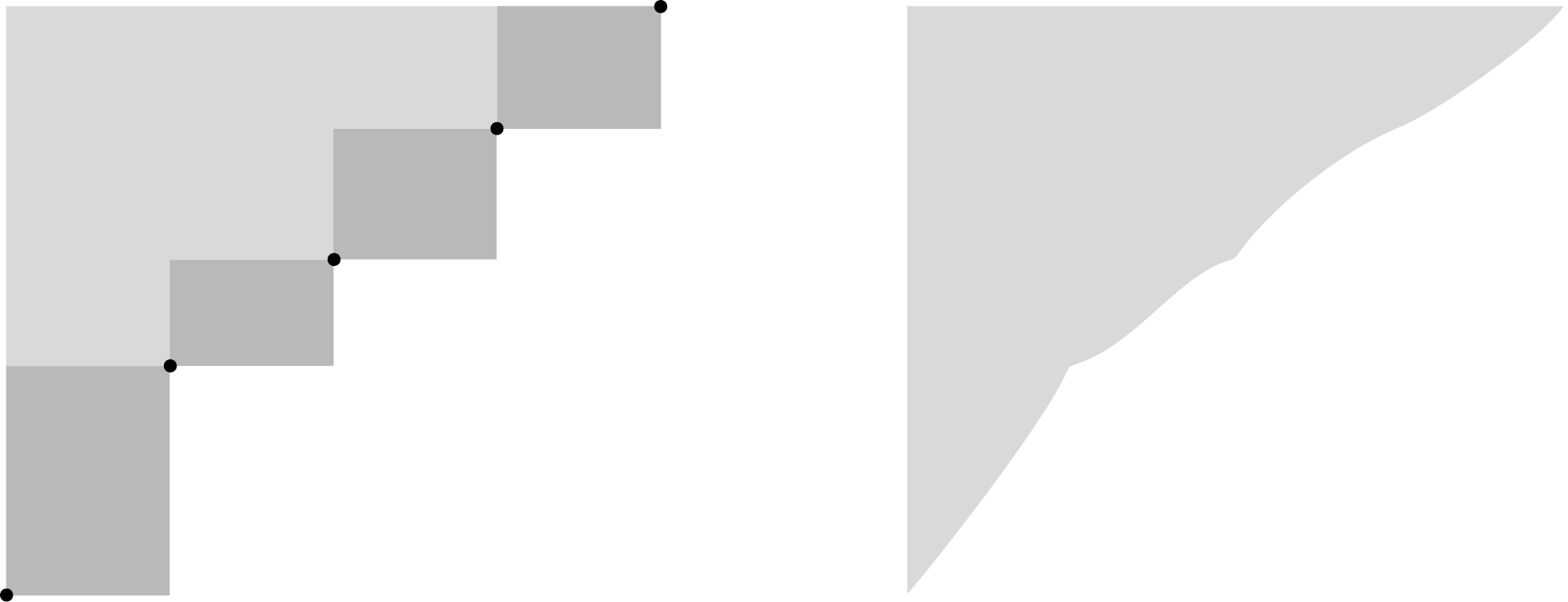}}%
    \put(0.3512037,0.33355356){\color[rgb]{0,0,0}\makebox(0,0)[lt]{\lineheight{1.25}\smash{\begin{tabular}[t]{l}$R_1$\end{tabular}}}}%
    \put(0.24676801,0.25261588){\color[rgb]{0,0,0}\makebox(0,0)[lt]{\lineheight{1.25}\smash{\begin{tabular}[t]{l}$R_2$\end{tabular}}}}%
    \put(0.14233231,0.17690002){\color[rgb]{0,0,0}\makebox(0,0)[lt]{\lineheight{1.25}\smash{\begin{tabular}[t]{l}$R_3$\end{tabular}}}}%
    \put(0.03789662,0.06985344){\color[rgb]{0,0,0}\makebox(0,0)[lt]{\lineheight{1.25}\smash{\begin{tabular}[t]{l}$R_4$\end{tabular}}}}%
    \put(0,0){\includegraphics[width=\unitlength,page=2]{staircase.pdf}}%
  \end{picture}%
\endgroup%
}
    \caption{Left: A staircase for $R_1,..,R_k$. Right: A slope for $R_1,...,R_k$.} 
    \label{fig:staircase}
\end{figure}

Note that if $k=1$, then the edge rectangle $R_1$ is a staircase for itself.

Extending the definition for rectangles, for staircases $S_1$ and $S_2$, $S_1$ is said to be \textit{taller} than $S_2$ if every leaf of $\mathcal{P}^u$ that intersects $S_2$ intersects $S_1$; $S_1$ is said to be \textit{wider} than $S_2$ if every leaf of $\mathcal{P}^s$ that intersects $S_2$ intersects $S_1$. 

Suppose we have chosen a veering diagonal $e_i$ for each $R_i$, then the complementary region of $\bigcup_i e_i$ in the staircase which contains the image of $(k,0)$ is called a \textit{slope} for $R_1,..,R_k$ with respect to the choice of veering diagonals. See \Cref{fig:staircase} right.

Note that if $k=1$, then there are two slopes for $R_1$. We will specify which one we are referring to in such a case.
\end{defn}

\begin{defn} \label{defn:edgepath}
Let $p$ be a point on $\mathcal{P}$ that is fixed by $g \in \pi_1(M^\circ)$. We define a \textit{red $g$-edge path} to be an infinite sequence of red edge rectangles $(R_i)_{i \in \mathbb{Z}}$ satisfying:
\begin{itemize}
    \item There is a collection of elements $(s_i)_{i \in \mathbb{Z}}$ of $\mathcal{S}$ such that each $R_i$ has corners at $s_{i-1}$ and $s_i$.
    \item For each $i$, we have one of the following two cases
    \begin{enumerate}
        \item The quadrants determined by $R_i$ and $R_{i+1}$ at $s_i$ are the same and $R_{i+1}$ is taller than $R_i$
        \item The quadrant determined by $R_{i+1}$ at $s_i$ is two quadrants to the left of the quadrant determined by $R_i$ 
    \end{enumerate}
    and case (1) occurs for at least one value of $i$.
    \item If $R_j, ..., R_k$ is a maximal subsequence such that $R_i$ and $R_{i+1}$ do not determine the same quadrant at $s_i$ for $j \leq i < k$, then there is a staircase $S_{j,k}$ for $R_j, ..., R_k$, such that $p$ lies in $S_{j,k}$.
    \item There exists some positive integer $P$ such that $g \cdot R_i=R_{i+P}$ for every $i$.
\end{itemize}

See \Cref{fig:edgepath}. We call $P$ the \textit{period} of the red $g$-edge path.

\begin{figure} 
    \centering
    \fontsize{16pt}{16pt}\selectfont
    \resizebox{!}{5.5cm}{
\begingroup%
  \makeatletter%
  \providecommand\color[2][]{%
    \errmessage{(Inkscape) Color is used for the text in Inkscape, but the package 'color.sty' is not loaded}%
    \renewcommand\color[2][]{}%
  }%
  \providecommand\transparent[1]{%
    \errmessage{(Inkscape) Transparency is used (non-zero) for the text in Inkscape, but the package 'transparent.sty' is not loaded}%
    \renewcommand\transparent[1]{}%
  }%
  \providecommand\rotatebox[2]{#2}%
  \newcommand*\fsize{\dimexpr\f@size pt\relax}%
  \newcommand*\lineheight[1]{\fontsize{\fsize}{#1\fsize}\selectfont}%
  \ifx\svgwidth\undefined%
    \setlength{\unitlength}{285.59540433bp}%
    \ifx\svgscale\undefined%
      \relax%
    \else%
      \setlength{\unitlength}{\unitlength * \real{\svgscale}}%
    \fi%
  \else%
    \setlength{\unitlength}{\svgwidth}%
  \fi%
  \global\let\svgwidth\undefined%
  \global\let\svgscale\undefined%
  \makeatother%
  \begin{picture}(1,0.90755962)%
    \lineheight{1}%
    \setlength\tabcolsep{0pt}%
    \put(0,0){\includegraphics[width=\unitlength,page=1]{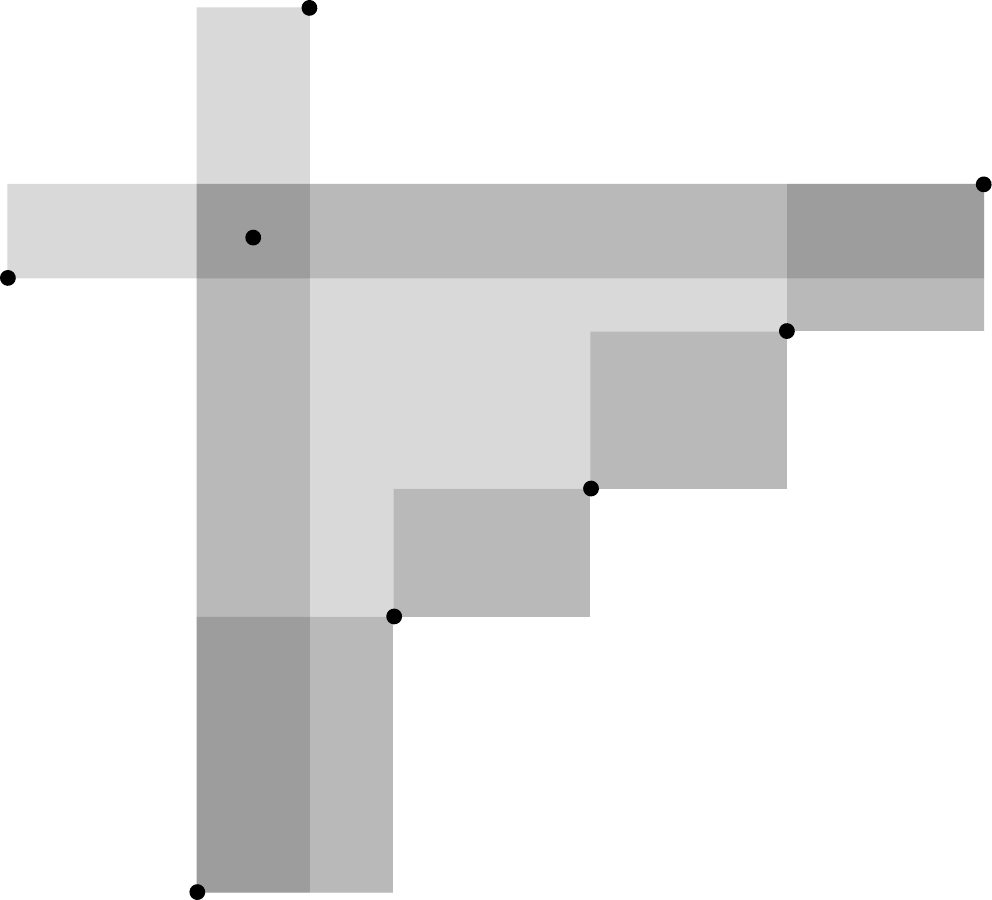}}%
    \put(0.23414441,0.60166018){\color[rgb]{0,0,0}\makebox(0,0)[lt]{\lineheight{1.25}\smash{\begin{tabular}[t]{l}$p$\end{tabular}}}}%
  \end{picture}%
\endgroup%
}
    \caption{A red $g$-edge path.} 
    \label{fig:edgepath}
\end{figure}

A \textit{blue $g$-edge path} is defined similarly. When $g$ and the color red/blue is clear from context, we will abbreviate these as \textit{edge paths}.

Intuitively, one can think of an edge path as a directed path obtained by transversing some choice of veering diagonals $e_i$ in $R_i$ in the direction of increasing $i$. Such a directed path is $g$-invariant and `winds around' $p$ with increasing height and decreasing width. The desirable case is when this intuitive picture holds.

Suppose we have chosen a veering diagonal $e_i$ for each $R_i$. Then the red $g$-edge path $(R_i)$ is said to be \textit{winding} with respect to the choice of veering diagonals if for every maximal subsequence $R_j, ..., R_k$ such that $R_i$ and $R_{i+1}$ do not determine the same quadrant at $s_i$ for $j \leq i < k$, $p$ lies in the slope for $R_j, ..., R_k$ on the side of $R_{j-1}$ and $R_{k+1}$.

See \Cref{fig:windingedgepath} left for a choice of veering diagonals for which the edge path in \Cref{fig:edgepath} is winding, and \Cref{fig:windingedgepath} right for a choice of veering diagonals for which the edge path is not winding.

Notice that the condition is automatically true when $k-j \geq 1$, since $p$ must lie outside of $R_j,...,R_k$ in this case. So one only needs to check the condition for $R_i$ where $R_{i-1}$ and $R_i$ determine the same quadrant at $s_{i-1}$ and $R_i$ and $R_{i+1}$ determine the same quadrant at $s_i$.

\begin{figure} 
    \centering
    \resizebox{!}{5cm}{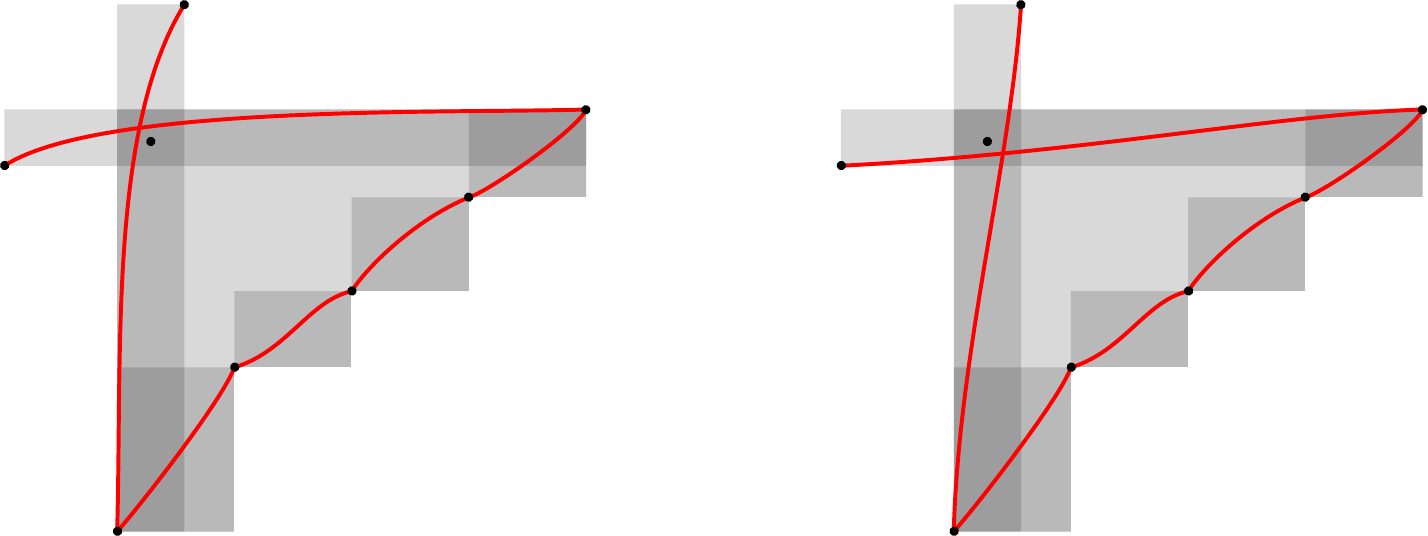}
    \caption{Left: A choice of veering diagonals for which the edge path in \Cref{fig:edgepath} is winding. Right: A choice of veering diagonals for which the edge path in \Cref{fig:edgepath} is not winding.} 
    \label{fig:windingedgepath}
\end{figure}

\end{defn}

The technical property we want is for specified edge paths to be winding, with respect to the choice of veering diagonals determined by where the edges of the veering triangulation $\widetilde{\Delta}$ project to. We are able to satisfy this technical property under an extra condition on the edge paths, which we now define.

\begin{defn}[{\cite[Section 5.1.2]{LMT21}}] \label{defn:corepoint}
Let $R$ be an edge rectangle. Set $R_0=R$. We define $R_i$ for $i \geq 1$ inductively in the following way: There exists a unique tetrahedron rectangle $Q_i$ whose two elements of $\mathcal{S}$ on its vertical sides are the corners of $R_{i-1}$. Consider the sub-rectangle of $Q_i$ which has corners at the elements of $\mathcal{S}$ on its horizontal sides. This is an edge rectangle which we call $R_i$. Similarly, for each $i \leq -1$, there exists a unique tetrahedron rectangle $Q_{i+1}$ whose two elements of $\mathcal{S}$ on its horizontal sides are corners of $R_{i+1}$. Consider the sub-rectangle of $Q_{i+1}$ which has corners at the elements of $\mathcal{S}$ on its vertical sides. This is an edge rectangle which we call $R_i$.

We call the bi-infinite sequence $(R_i)$ the \textit{core sequence} of $R$. The intersection $\bigcap_{i=-\infty}^\infty R_i$ is a single point, which we call the \textit{core point} of $R$ and denote as $c(R)$. See \Cref{fig:corepoint}.

\begin{figure} 
    \centering
    \fontsize{16pt}{16pt}\selectfont
    \resizebox{!}{6cm}{
\begingroup%
  \makeatletter%
  \providecommand\color[2][]{%
    \errmessage{(Inkscape) Color is used for the text in Inkscape, but the package 'color.sty' is not loaded}%
    \renewcommand\color[2][]{}%
  }%
  \providecommand\transparent[1]{%
    \errmessage{(Inkscape) Transparency is used (non-zero) for the text in Inkscape, but the package 'transparent.sty' is not loaded}%
    \renewcommand\transparent[1]{}%
  }%
  \providecommand\rotatebox[2]{#2}%
  \newcommand*\fsize{\dimexpr\f@size pt\relax}%
  \newcommand*\lineheight[1]{\fontsize{\fsize}{#1\fsize}\selectfont}%
  \ifx\svgwidth\undefined%
    \setlength{\unitlength}{451.35609041bp}%
    \ifx\svgscale\undefined%
      \relax%
    \else%
      \setlength{\unitlength}{\unitlength * \real{\svgscale}}%
    \fi%
  \else%
    \setlength{\unitlength}{\svgwidth}%
  \fi%
  \global\let\svgwidth\undefined%
  \global\let\svgscale\undefined%
  \makeatother%
  \begin{picture}(1,0.66429981)%
    \lineheight{1}%
    \setlength\tabcolsep{0pt}%
    \put(0,0){\includegraphics[width=\unitlength,page=1]{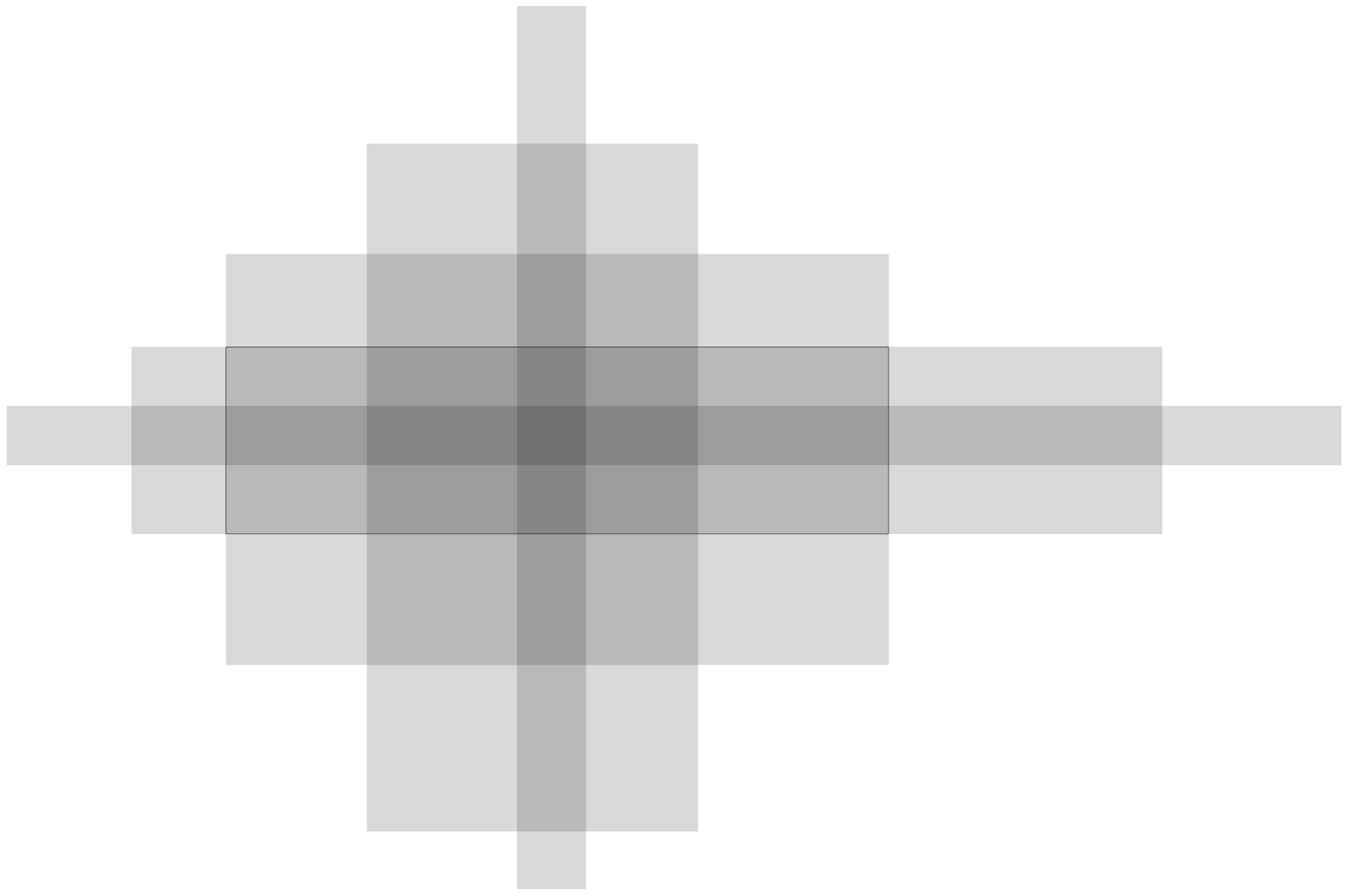}}%
    \put(0.59695626,0.23074079){\color[rgb]{0,0,0}\makebox(0,0)[lt]{\lineheight{1.25}\smash{\begin{tabular}[t]{l}$R$\end{tabular}}}}%
    \put(0,0){\includegraphics[width=\unitlength,page=2]{corepoint.pdf}}%
    \put(0.38334409,0.29480443){\color[rgb]{0,0,0}\makebox(0,0)[lt]{\lineheight{1.25}\smash{\begin{tabular}[t]{l}$c(R)$\end{tabular}}}}%
    \put(0,0){\includegraphics[width=\unitlength,page=3]{corepoint.pdf}}%
  \end{picture}%
\endgroup%
}
    \caption{The core sequence and core point of an edge rectangle $R$.} 
    \label{fig:corepoint}
\end{figure}

\end{defn}

\begin{defn} \label{defn:niceedgepath}
A red $g$-edge path is said to be \textit{nice} if whenever $R_{i-1}$ and $R_i$ determine the same quadrant at $s_{i-1}$ and $R_i$ and $R_{i+1}$ determine the same quadrant at $s_i$, $p$ lies closer to the vertical side of $R_i$ containing $s_i$ than the core point of $R_i$.

A nice blue $g$-edge path is defined similarly.
\end{defn}

Let $\gamma$ be a closed orbit of $\phi$ that is not an element of $\mathcal{C}$. Suppose for every $p \in \mathcal{P}$ that corresponds to an orbit of $\widehat{\phi}$ covering $\gamma$ and which is fixed by $h [\gamma] h^{-1} \in \pi_1(M^\circ)$, we are given a nice red $h [\gamma] h^{-1}$-edge path $(R_{i,h})$. If these nice red edge paths are $\pi_1(M^\circ)$-equivariant, or more preicisely, if $R_{i,gh}=g \cdot R_{i,h}$ for every $g \in \pi_1(M^\circ)$, then we call this collection a \textit{nice red $\gamma$-edge path}. A \textit{nice blue $\gamma$-edge path} is defined similarly.

We can finally state our result:

\begin{prop} \label{prop:windingedgepath}
In the setting of \Cref{thm:LMT}, suppose we are given a finite collection of closed orbits of $\phi$ which is disjoint from $\mathcal{C}$, and a nice $\gamma$-edge path for every $\gamma$ in the collection. Then there exists a veering triangulation $\Delta$ on $M \backslash \bigcup \mathcal{C}$ with 2-skeleton positively transverse to the flow $\phi$ and such that each given edge path is winding, with respect to the choice of veering diagonals determined by where the edges of $\widetilde{\Delta}$ project to.
\end{prop}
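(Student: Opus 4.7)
The plan is to reprise the construction of \cite{LMT21} that proves \Cref{thm:LMT}, keeping careful track of the freedom we have in placing the veering diagonal within each edge rectangle, and then showing that the niceness hypothesis is precisely what is needed to arrange for each prescribed edge path to be winding. In that construction one must choose, for each edge rectangle $R$, a veering diagonal $e_R$, subject only to the compatibility constraint that the three diagonals in the three edge rectangles of any face rectangle can be taken to be pairwise disjoint. Once such a coherent system of diagonals is fixed, the associated veering triangulation $\widetilde{\Delta}$ is determined, and each edge of $\widetilde{\Delta}$ projects to the veering diagonal in its corresponding edge rectangle.

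First I would translate the winding condition into an explicit constraint on these choices. For each index $i$ in a given edge path where both $R_{i-1}, R_i$ and $R_i, R_{i+1}$ determine the same quadrant at $s_{i-1}$ and $s_i$ respectively, the winding condition amounts to requiring that $e_{R_i}$ meet the leaf of $\mathcal{P}^s$ through $p$ on the side of $c(R_i)$ opposite to $s_i$. The niceness hypothesis places $p$ strictly between $c(R_i)$ and the vertical side of $R_i$ containing $s_i$, so the winding-friendly positions form a nonempty open subinterval of the range of allowed positions for $e_{R_i}$, a range which always contains $c(R_i)$.

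Next I would use $\pi_1(M^\circ)$-equivariance to reduce to finitely many constraints. Since only finitely many closed orbits are given and each prescribed edge path is equivariant with some period $P$, the winding constraints involve only finitely many $\pi_1(M^\circ)$-orbits of edge rectangles. For edge rectangles in these finitely many orbits I would choose $e_R$ inside the winding-friendly subinterval described above; for all other edge rectangles I would use a canonical placement, such as the veering diagonal through $c(R)$. If two distinct given edge paths happen to share an edge rectangle, the two resulting constraints both single out the same side of $p$ determined by the quadrant data, so there is no conflict.

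The main obstacle will be verifying that the face-rectangle compatibility condition of \cite{LMT21} survives the perturbation away from the canonical placement. This is the technical content of a detailed trace through their construction: one has to show that the winding-friendly subintervals, chosen in each affected edge rectangle, remain simultaneously realizable as disjoint diagonals in every face rectangle containing them. Once this is established, the remaining steps of the LMT construction, namely defining the fibration to $\mathcal{P} \setminus \mathcal{S}$, the equivariant map $h$, and straightening it into a homeomorphism $f : N \to M^\circ$, proceed unchanged, and they produce a veering triangulation $\Delta$ whose edges realize the winding behavior along each prescribed $\gamma$-edge path.
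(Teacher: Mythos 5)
Your overall strategy is the same as the paper's: re-run the construction of \cite{LMT21}, exploit the freedom in placing veering diagonals, and use niceness to guarantee that a ``winding-friendly'' placement exists in each relevant edge rectangle. The translation of the winding condition into a constraint on where the diagonal of $R_i$ crosses the leaf through $p$, and the observation that niceness (i.e.\ $p$ lying strictly between $c(R_i)$ and the vertical side of $R_i$ containing $s_i$) makes the constraint satisfiable, are both correct and match the paper.

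However, you explicitly defer the one step that carries the technical weight: you write that ``the main obstacle will be verifying that the face-rectangle compatibility condition survives the perturbation,'' and then assume it is established. That verification is the actual content of the proof, and the way the paper handles it is not by perturbing a canonical system of diagonals and re-checking disjointness by hand. Instead it feeds the data into the existing machinery of \cite{LMT21}: (a) the anchor system is built from core boxes that can be taken arbitrarily small, so the anchor $\alpha(R_i)$ can be placed close enough to $c(R_i)$ that niceness gives $p$ on the correct side of $\alpha(R_i)$ (note that for pinched rectangles $\alpha(R)$ is \emph{not} $c(R)$, so ``canonical placement through $c(R)$'' is not available in general); and (b) the points $p$ corresponding to the given closed orbits are added to the buoy set $B$, so that the $g$-equivariant paths from $s$ to the anchors — required to be homotopic rel endpoints to the first-horizontal-then-vertical path in $Q\setminus B$, mutually disjoint, and transverse to $\mathcal{P}^s$ and $\mathcal{P}^u$ — automatically pass on the prescribed side of $p$. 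Disjointness of the three diagonals in each face rectangle is then exactly \cite[Lemma 5.3]{LMT21}, unchanged, rather than a new compatibility check. Without identifying this mechanism, your argument has a genuine gap at its central step; with it, the rest of your outline (fibration, the map $h$, straightening to $f$) goes through verbatim as you say.
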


\subsection{Proof of \Cref{prop:windingedgepath}} \label{subsec:proofofwindingedgepath}

As remarked before, we only need to modify the first part of the proof in \cite{LMT21}, namely their Proposition 5.2. So we will only explain in depth the parts that we need to modify, and refer the reader to \cite{LMT21} for details concerning the rest of the proof.

We recall some of the definitions found in \cite{LMT21}.

\begin{defn} \label{defn:anchorsystem}
An \textit{anchor system} is a pair $(A,\alpha)$ where $\alpha$ is a bijection from the set of edge rectangles onto a subset $A \subset \mathcal{P}$ satisfying:
\begin{itemize}
    \item For each edge rectangle $R$, $\alpha(R)$ lies in the interior of $R$.
    \item $g \cdot \alpha(R) = \alpha(g \cdot R)$ for each edge rectangle $R$ and each $g \in \pi_1(M^\circ)$
    \item For edge rectangles $R_1$ and $R_2$ sharing a corner $s \in \mathcal{S}$ and determining the same quadrant at $s$, if $R_1$ is wider than $R_2$, then the rectangle with corners at $s$ and $\alpha(R_1)$ is wider and no taller than the rectangle with corners at $s$ and $\alpha(R_2)$.
\end{itemize}

Let $(A,\alpha)$ be an anchor system. Let $F$ be a face rectangle. Let $s$ be the corner of $F$ that lies in $\mathcal{S}$, let $x$ be the element of $\mathcal{S}$ that lies in the interior of a vertical side of $F$, and let $y$ be the element of $\mathcal{S}$ that lies in the interior of a horizontal side of $F$. Let $a_x$ be the image of the edge rectangle with corners at $s$ and $x$ under $\alpha$, and $a_y$ be the image of the edge rectangle with corners at $s$ and $y$ under $\alpha$. If the rectangle $R$ with corners at $x$ and $a_x$ intersects the rectangle $Q$ of $F$ with corners at $y$ and $a_y$, then $F$ is said to be \textit{busy}. If $F$ is busy, let $R'$ be the maximal sub-rectangle of $R$ with the property that the stable and unstable leaves through each point in $R'$ do not intersect the interior of $Q$. A \textit{$F$-buoy} is a point in $R'$ which corresponds to an orbit of $\widehat{\phi}$ covering a closed orbit of $\phi$.
\end{defn}

\begin{defn} \label{defn:pinched}
Let $s$ be an element of $\mathcal{S}$, and let $q$ be a quadrant at $\gamma$. A \textit{$s$-staircase} is the union of all edge rectangles that have a corner at $s$ and determine the quadrant $q$. Generally, a \textit{$\mathcal{S}$-staircase} is a $s$-staircase for some $s \in \mathcal{S}$. Notice that these were simply called staircases in \cite{LMT21} but in this paper we need to distinguish them from the objects in \Cref{defn:staircase}. 

\begin{figure} 
    \centering
    \fontsize{14pt}{14pt}\selectfont
    \resizebox{!}{5.5cm}{
\begingroup%
  \makeatletter%
  \providecommand\color[2][]{%
    \errmessage{(Inkscape) Color is used for the text in Inkscape, but the package 'color.sty' is not loaded}%
    \renewcommand\color[2][]{}%
  }%
  \providecommand\transparent[1]{%
    \errmessage{(Inkscape) Transparency is used (non-zero) for the text in Inkscape, but the package 'transparent.sty' is not loaded}%
    \renewcommand\transparent[1]{}%
  }%
  \providecommand\rotatebox[2]{#2}%
  \newcommand*\fsize{\dimexpr\f@size pt\relax}%
  \newcommand*\lineheight[1]{\fontsize{\fsize}{#1\fsize}\selectfont}%
  \ifx\svgwidth\undefined%
    \setlength{\unitlength}{244.62448982bp}%
    \ifx\svgscale\undefined%
      \relax%
    \else%
      \setlength{\unitlength}{\unitlength * \real{\svgscale}}%
    \fi%
  \else%
    \setlength{\unitlength}{\svgwidth}%
  \fi%
  \global\let\svgwidth\undefined%
  \global\let\svgscale\undefined%
  \makeatother%
  \begin{picture}(1,0.86102678)%
    \lineheight{1}%
    \setlength\tabcolsep{0pt}%
    \put(0,0){\includegraphics[width=\unitlength,page=1]{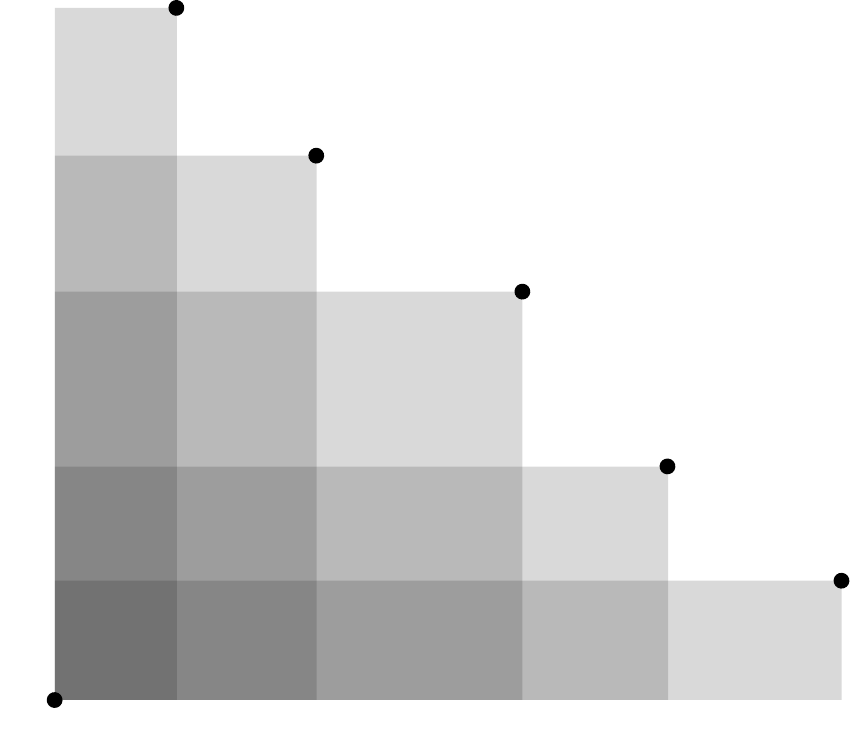}}%
    \put(-0.00556896,0.01297029){\color[rgb]{0,0,0}\makebox(0,0)[lt]{\lineheight{1.25}\smash{\begin{tabular}[t]{l}$s$\end{tabular}}}}%
  \end{picture}%
\endgroup%
}
    \caption{A $s$-staircase.} 
    \label{fig:LMTstaircase}
\end{figure}

An edge rectangle $R$ is \textit{pinched} if there exists another edge rectangle $Q$ such that $R$ and $Q$ lie in the same $\mathcal{S}$-staircase and $c(R)=c(Q)$. A core point $c$ is \textit{pinched} if it is the core point of a pinched edge rectangle.

The \textit{preimage} of a core point $c$ is the union of edge rectangles whose core point is $c$.
\end{defn}

\begin{defn} \label{defn:corebox}
A family of rectangles $\{b(R):\text{edge rectangles $R$}\}$ is a choice of \textit{core boxes} if it satisfies the following properties.
\begin{enumerate}
    \item For every edge rectangle $R$, $b(R)$ lies in the interior of $R$ and contains the core point of $R$.
    \item If $R_1$ and $R_2$ are edge rectangles that lie in the same $\mathcal{S}$-staircase with distinct core points and $R_1$ is taller than $R_2$, then for any points $x_i \in b(R_i)$, the rectangle $R'_1$ with corners at $s$ and $x_1$ is strictly taller than the rectangle $R'_2$ with corners at $s$ and $x_2$, and $R'_2$ is strictly wider than $R'_1$.
    \item $b(g \cdot R) = g \cdot b(R)$ for all edge rectangles $R$ and $g \in \pi_1(M^\circ)$
\end{enumerate}
\end{defn}

We follow \cite{LMT21} to construct an anchor system. First construct a choice of core boxes: From each $\pi_1(M^\circ)$-orbit of $\mathcal{S}$-staircases, choose a particular $s$-staircase $S$, then choose \textit{preliminary boxes} $b_S(R)$ for edge rectangles $R \subset S$ satisfying:
\begin{enumerate}
    \item For every edge rectangle $R$, $b_S(R)$ lies in the interior of $R$ and contains the core point of $R$.
    \item If $R_1$ and $R_2$ are edge rectangles that lie in $S$ with distinct core points and $R_1$ is taller than $R_2$, then for any points $x_i \in b(R_i)$, the rectangle $R'_1$ with corners at $s$ and $x_1$ is strictly taller than the rectangle $R'_2$ with corners at $s$ and $x_2$, and $R'_2$ is strictly wider than $R'_1$.
    \item $b_S(g \cdot R) = g \cdot b_S(R)$ for all edge rectangles $R$ in $S$ and all $g \in \pi_1(M^\circ)$ that preserve $S$.
\end{enumerate}
Then define $b_{g \cdot S}(g \cdot R) = g \cdot b_S(R)$ for each $g \in \pi_1(M^\circ)$. Finally define $b(R)=b_{S_1}(R) \cap b_{S_2}(R)$ for the two $\mathcal{S}$-staircases $S_1$ and $S_2$ in which $R$ lies in.

To proceed, we need the following fact:

\begin{lemma}[{\cite[Claim 5.8]{LMT21}}] \label{lemma:LMTpinchanchor}
Let $c$ be a pinched core point, let $g$ be the primitive element of $\pi_1(M)$ that preserves $c$, and let $P_c$ be the preimage of $c$. Then for any $\lambda>1$, there exists an embedding $\Psi_{c,\lambda}:P_c \to \mathbb{R}^2$ that conjugates the action of $g$ with $\phi_{2,k,\lambda}$ for some $k$.
\end{lemma}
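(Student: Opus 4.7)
The plan is to construct $\Psi_{c,\lambda}$ by first linearizing the action of $g$ in a neighborhood of $c$ using the canonical transverse measures of the pseudo-Anosov structure, and then extending the chart globally using the product structure on the edge rectangles comprising $P_c$.

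First, observe that $c$ sits in the interior of an edge rectangle and rectangles by definition contain no elements of $\mathcal{S}$ in their interior, so $c \notin \mathcal{S}$. Thus $c$ is a regular point of $\mathcal{P}$, a neighborhood of $c$ is a regular bifoliated plane, and the orbit of $\widehat{\phi}$ corresponding to $c$ descends to a closed, non-singular orbit of $\phi$ in $M^\circ$. The element $g$ acts on $\mathcal{P}$ as (a lift of) the Poincar\'e return map of this orbit: hyperbolic, preserving $\mathcal{P}^s$ and $\mathcal{P}^u$, and fixing $c$.

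Let $\mu^s$ (transverse to $\mathcal{P}^s$, i.e.\ integrated along $\mathcal{P}^u$-leaves) and $\mu^u$ (transverse to $\mathcal{P}^u$) denote the invariant transverse measures coming from the pseudo-Anosov structure, which $g$ scales by $\mu^s \mapsto \lambda_g\, \mu^s$ and $\mu^u \mapsto \lambda_g^{-1}\, \mu^u$ for some $\lambda_g > 1$. For $p \in P_c$, choose any edge rectangle $R \subset P_c$ containing $p$; then $R$ has a product structure with $c$ in its interior, so we can project $p$ along its $\mathcal{P}^s$-leaf to the $\mathcal{P}^u$-leaf through $c$ inside $R$. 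Let $x_0(p)$ be the signed $\mu^s$-measure of the resulting arc from $c$, and define $y_0(p)$ analogously. These values do not depend on the choice of $R$, since any two edge rectangles in $P_c$ agree on leafwise holonomy through $c$ and the measures are canonical. In coordinates $(x_0, y_0)$, the map $g$ acts as $(x_0, y_0) \mapsto (\epsilon_1 \lambda_g x_0,\, \epsilon_2 \lambda_g^{-1} y_0)$ where $\epsilon_1, \epsilon_2 \in \{\pm 1\}$ record whether each axis is preserved or reversed; orientation-preservation of $g$ on $\mathcal{P}$ forces $\epsilon_1 \epsilon_2 = +1$, leaving exactly the two cases $(+,+)$ and $(-,-)$, corresponding to $k=0$ and $k=1$ respectively. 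To reach the prescribed expansion factor $\lambda$, post-compose with the homeomorphism $(x_0, y_0) \mapsto (X, Y) := (\mathrm{sgn}(x_0)|x_0|^\alpha,\, \mathrm{sgn}(y_0)|y_0|^\alpha)$ with $\alpha = \log \lambda / \log \lambda_g$; in the new coordinates $g$ acts as $\phi_{2,k,\lambda}$ exactly. Set $\Psi_{c,\lambda}(p) := (X(p), Y(p))$.

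The main obstacle is verifying that $\Psi_{c,\lambda}$ is an embedding. Continuity is routine from continuity of the foliation holonomies and the transverse measures within each edge rectangle, plus compatibility on overlaps. Injectivity is the real content: if $\Psi_{c,\lambda}(p) = \Psi_{c,\lambda}(q)$ for $p, q \in P_c$, then $p$ and $q$ have the same projections to the $\mathcal{P}^u$-leaf and the $\mathcal{P}^s$-leaf through $c$; since $\mathcal{P}^s$ and $\mathcal{P}^u$ are honest transverse foliations on $\mathcal{P} \setminus \mathcal{S}$ and $c \notin \mathcal{S}$, this forces $p$ and $q$ to lie on the same stable and same unstable leaf, and the product structure in an edge rectangle containing both then gives $p=q$. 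The bookkeeping here is a little delicate because edge rectangles in $P_c$ can overlap subtly due to the pinching, but this is controlled by the fact that near $c$ they all restrict to pieces of a single regular bifoliated neighborhood. Equivariance $\Psi_{c,\lambda} \circ g = \phi_{2,k,\lambda} \circ \Psi_{c,\lambda}$ follows directly from the scaling behavior of $\mu^s, \mu^u$ under $g$ and the definition of the power rescaling.
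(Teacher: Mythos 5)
First, a point of reference: the paper does not prove this statement at all --- it is quoted verbatim from \cite[Claim 5.8]{LMT21} and used as a black box. So your proposal can only be judged on its own merits, and while its skeleton (coordinates via leafwise projection onto the two $g$-invariant leaves through $c$, linearization of $g$ on those leaves, injectivity from the fact that a leaf of $\mathcal{P}^s$ and a leaf of $\mathcal{P}^u$ meet at most once) is a viable route, two steps are genuinely gapped. The first is your starting point: there are no ``canonical invariant transverse measures'' on $\mathcal{O}^s,\mathcal{O}^u$ that every deck transformation scales by a constant; a pseudo-Anosov flow in the sense of \Cref{defn:pAflow} is not a suspension, and even for, say, geodesic flows the Margulis conditionals do not descend to the orbit space as measures with constant Jacobian under $\pi_1$. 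What you actually need is much weaker --- a homeomorphism of each $g$-invariant half-leaf through $c$ to $[0,\infty)$ conjugating $g$ (or $g^2$, in the $k=1$ case) to multiplication by $\lambda^{\pm1}$ --- and this exists by elementary one-dimensional dynamics once you know $c$ is the \emph{only} fixed point of $g$ on those leaves. That last fact is not free either: a second fixed point would be a second closed orbit with the same class in $\pi_1(M^\circ)$, which you must rule out via \Cref{lemma:nohomotopicorbits} (i.e.\ via the no-perfect-fits hypothesis). None of this appears in your write-up.

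The second and more serious gap is that the verification that $\Psi_{c,\lambda}$ is an embedding --- which is the entire content of the claim --- is asserted rather than proved. Injectivity is fine provided you justify that distinct leaves of $\mathcal{P}^s$ and $\mathcal{P}^u$ meet at most once (this needs that leaves of $\mathcal{P}$ project injectively to $\mathcal{O}$ and that leaves in $\mathcal{O}$ meet at most once); note that your closing appeal to ``an edge rectangle containing both $p$ and $q$'' is unnecessary and in fact unavailable, since no such common rectangle need exist. But injective plus continuous does not give an embedding on a non-compact set, and continuity itself is not ``routine'': the rectangles of $P_c$ are infinite in number, accumulate at $c$ with widths and heights tending to $0$, overlap each other, and have corners at branch points of $\mathcal{S}$ where $\mathcal{P}$ is not even a manifold. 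You need to show the leafwise projections vary continuously across all of this (local product neighborhoods at points of $\mathcal{P}\setminus\mathcal{S}$ handle the interior; the corners need a separate argument), and then recover openness, e.g.\ by invariance of domain on the manifold part of $P_c$ together with a boundary analysis. Your one-sentence dismissal (``controlled by the fact that near $c$ they all restrict to pieces of a single regular bifoliated neighborhood'') only addresses a neighborhood of $c$ and says nothing about the global structure of $P_c$, which is exactly where the pinching hypothesis has to be used. As it stands the proposal replaces the hard part of the claim with an acknowledgment that it is delicate.
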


From each $\pi_1(M^\circ)$-orbit of pinched core points, choose a particular one $c$. Apply \Cref{lemma:LMTpinchanchor} to obtain a $\Psi_{c,\lambda}$, then define $\Psi_{g \cdot c,\lambda} = g \cdot \Psi_{c,\lambda}$ for every $g \in \pi_1(M^\circ)$. For each pinched edge rectangle $R$, in $\Psi_{c(R),\lambda}$ coordinates, draw the straight line between the corners of $R$ that lie in $\mathcal{S}$, and let the point where this straight line passes through the $x$-axis be $a_\lambda(R)$. 

It is argued in \cite[Claim 5.9]{LMT21} that for small enough $\lambda$, $a_\lambda(R) \in b(R)$ all edge rectangles $R$. Fix such a $\lambda$. Define $\alpha(R)=c(R)$ for non-pinched $R$ and $\alpha(R)=a_\lambda(R)$ for pinched $R$. \cite[Claim 5.10]{LMT21} shows that this defines an anchor system.

In this construction, notice that $b_S(R)$, hence $b(R)$, can be chosen to be arbitrarily small, and so $a_\lambda(R)$, hence $\alpha(R)$, can be chosen to be arbitrarily close to $c(R)$ for each edge rectangle $R$. In particular, we may assume that for a given $g$-edge path $(R_i)$, if $R_{i-1}$ and $R_i$ lie in the same quadrant at $s_{i-1}$ and $R_i$ and $R_{i+1}$ lie in the same quadrant at $s_i$, then $p$ lies closer to the vertical side of $R_i$ containing $s_i$ than $\alpha(R_i)$.

Now with respect to this choice of anchor system, choose a $F$-buoy for each busy face $F$. Let $B$ be the collection of all $F$-buoys and all points that correspond to an orbit of $\widehat{\phi}$ covering a closed orbit of $\phi$ in the given collection. Notice that $B$ is discrete.

For every $\mathcal{S}$-staircase $S$, say $S$ is a $s$-staircase, let $g$ be a primitive element of $\pi_1(M^\circ)$ that preserves $S$. Choose a $g$–equivariant family of paths from $s$ to the anchors of the edge rectangles in $S$ with the following three properties:
\begin{enumerate}
    \item For each edge rectangle $R \subset S$, let $Q$ be the sub-rectangle with corners at $s$ and $\alpha(R)$. The path from $s$ to $\alpha(R)$ is homotopic rel endpoints to the first-horizontal-then-vertical path in $Q \backslash B$.
    \item The paths are disjoint except at $s$
    \item The paths are transverse to $\mathcal{P}^s$ and $\mathcal{P}^u$
\end{enumerate}

See \Cref{fig:anchorsystem}, which is a reproduction of \cite[Figure 21]{LMT21}, for an illustration of what these paths look like.

\begin{figure} 
    \centering
    \fontsize{14pt}{14pt}\selectfont
    \resizebox{!}{5.5cm}{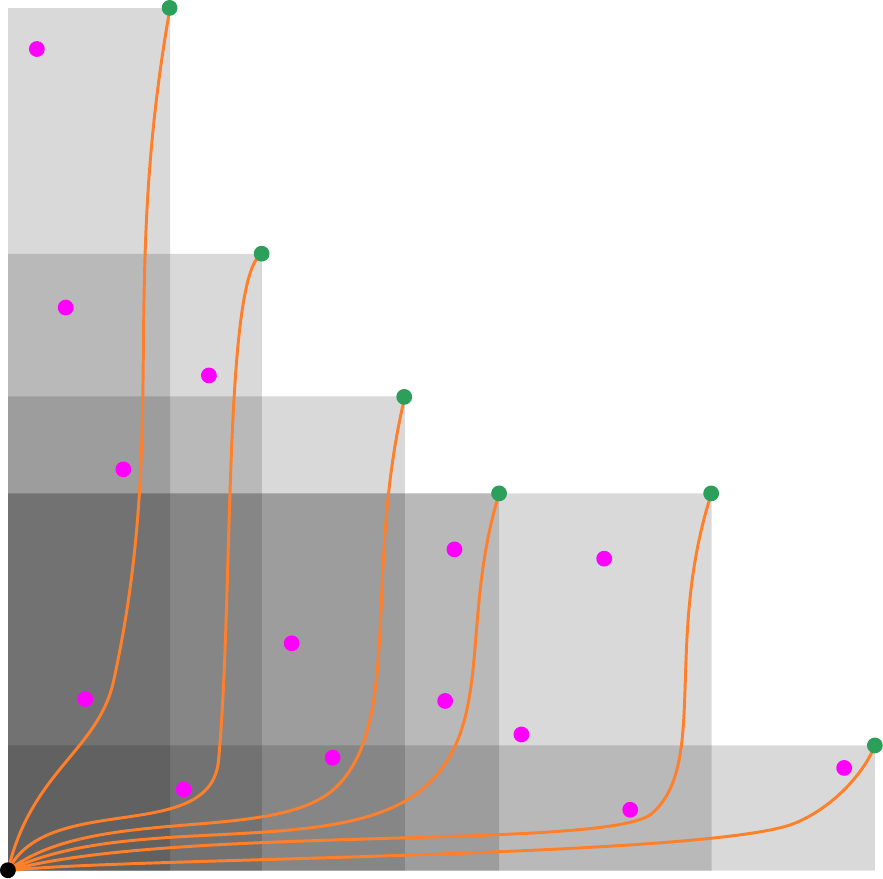}
    \caption{Using the anchor system (green) and the points in $B$ (pink) to choose paths (orange) that will form the veering diagonals. This figure is a reproduction of \cite[Figure 21]{LMT21}.} 
    \label{fig:anchorsystem}
\end{figure}

Construct the veering diagonal for each edge rectangle $R$ by concatenating the paths from each of the corners at $\mathcal{S}$ to $\alpha(R)$. In \cite[Lemma 5.3]{LMT21}, it is argued that the three veering diagonals of every face rectangle have disjoint interiors. We further claim that each of the given edge paths is winding with respect to this choice of veering diagonals.

To see this, fix a given red $g$-edge path $(R_i)$. If $R_{i-1}$ and $R_i$ determine the same quadrant at $s_{i-1}$ and $R_i$ and $R_{i+1}$ determine the same quadrant at $s_i$, then $p \in B$ lies closer to the vertical side of $R_i$ containing $s_i$ than $\alpha(R_i)$. Therefore by property (1) above, $p$ lies in the slope for $R_i$ to the side of $R_{i-1}$ and $R_{i+1}$.

The rest of the proof proceeds exactly as in \cite{LMT21}, as outlined in \Cref{subsec:rectangles}. The edges of the lift of the constructed triangulation $\widetilde{\Delta}$ project down to the chosen veering diagonals, hence each of the given edge paths are winding.

\section{Closed orbits of the flow} \label{sec:closedorbits}

One of the advantages to studying pseudo-Anosov flows using veering triangulations is that one can encode orbits of the flow using discrete, combinatorial objects coming from the triangulation. This is the idea we will be exploring in this section. 

We define the dual graph and the flow graph of a veering triangulation. In the setting of \Cref{thm:LMT}, cycles of these encode the closed orbits of $\phi$. This was described in \cite{LMT21}, but we will recall how this works in \Cref{subsec:dualgraphflowgraph}. Using this fact, we can define the complexity of a closed orbit according to how long of a flow graph cycle we need to use to encode it. This complexity then provides a bound on the number of interactions between the closed orbit and objects of the triangulation, which we explain in \Cref{subsec:orbitcomplexity}.

\subsection{Dual graph and flow graph} \label{subsec:dualgraphflowgraph}

\begin{defn} \label{defn:dualgraph}


Let $\Delta$ be a veering triangulation on a 3-manifold $M$. We define the \textit{stable branched surface} $B$ of $\Delta$: As a $2$-complex, $B$ is the dual cell complex to $\Delta$. The branched surface structure is then determined by declaring that within each tetrahedra, $B$ contains a smooth quadrilateral with vertices on the top and bottom edges and the two side edges of the same color as the top edge. See \Cref{fig:branchsurf} left.

The $2$-cells of $B$ are known as the \textit{sectors}, while the $1$-skeleton of $B$ is known as the \textit{branch locus}. We orient the 1-cells of $B$ to be positively transverse to the faces of $\Delta$. Then the 1-skeleton of $B$ becomes a directed graph embedded in $M$, which we call the \textit{dual graph} of $\Delta$ and denote by $\Gamma$.

Suppose $c$ is a directed path of $\Gamma$, then at a vertex $v$ of $c$, we say that $c$ takes a \textit{branching turn} at $v$ if it can be realized by a smooth arc on $B$ near $v$, otherwise we say that $c$ takes an \textit{anti-branching turn} at $v$. A cycle of $\Gamma$ that only takes branching turns is called a \textit{branch cycle}. A cycle of $\Gamma$ that only takes anti-branching turns is called an \textit{AB cycle}.
\end{defn}

\begin{defn}[{\cite{LMT20}}] \label{defn:flowgraph}
Let $\Delta$ be a veering triangulation on a 3-manifold $M$. Define the \textit{flow graph} $\Phi$ to be a directed graph with the set of vertices equals to the set of edges of $\Delta$, and adding 3 edges for each tetrahedron, going from the bottom edge to the top edge and the two side edges of opposite color to the top edge.

$\Phi$ can be naturally embedded in the stable branched surface $B$, hence in $M$, by placing each vertex at the top corner of the sector of $B$ that its corresponding edge of $\Delta$ meets, and placing the edges that enter that vertex within that sector of $B$. See \Cref{fig:branchsurf} right. We will always consider the flow graph to be embedded in $M$ in this way.
\end{defn}

\begin{figure} 
    \centering
    \resizebox{!}{3.5cm}{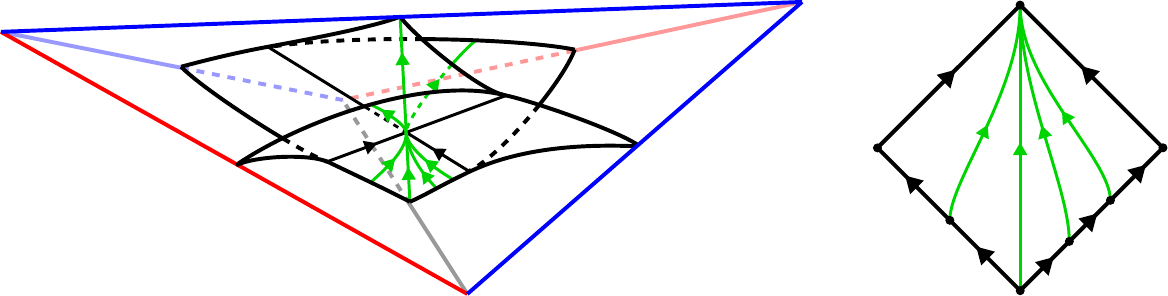}
    \caption{Left: The portion of the stable branched surface and the flow graph within each tetrahedron. Right: The portion of the flow graph on each sector of the stable branched surface.}
    \label{fig:branchsurf}
\end{figure}

We recall the notion of a dynamic plane, which was introduced in \cite{LMT21}.

\begin{defn} \label{defn:dynamicplane}
Let $\Delta$ be a veering triangulation on a 3-manifold $M$. Lift the triangulation, its stable branched surface $B$, its dual graph $\Gamma$ and its flow graph $\Phi$ to the universal cover $\widetilde{M}$ to get $\widetilde{\Delta}$, $\widetilde{B}$, $\widetilde{\Gamma}$, and $\widetilde{\Phi}$ respectively. 

A \textit{descending path} on $\widetilde{B}$ is a path that intersects the branch locus of $\widetilde{B}$ transversely and induces the maw coorientation at each intersection, that is, it goes from a side with more sectors to a side with less sectors. Let $x$ be a point on $\widetilde{B}$. The \textit{descending set} of $x$, denoted by $\Delta(x)$, is the set of points on $\widetilde{B}$ that can be reached from $x$ via a descending path. By \cite[Lemma 3.1]{LMT21}, each descending set is a union of sectors that forms a quarter-plane.

Now let $c$ be a cycle of the dual graph $\Gamma$ that is not a branch cycle. Lift $c$ to a bi-infinite path $\widetilde{c}$ of $\widetilde{\Gamma}$. The \textit{dynamic plane} associated to $\widetilde{c}$, denoted by $D(\widetilde{c})$, is the set of points on $\widetilde{B}$ that can be reached from a point on $\widetilde{c}$ via a descending path. A dynamic plane is a union of sectors that forms a plane. In fact, if the vertices of $\widetilde{c}$ are $(v_i)_{i \in \mathbb{Z}}$, then $D(\gamma) = \bigcup_i \Delta(v_i)$. 

Consider the restriction of $\widetilde{\Phi}$ to a dynamic plane $D$. This is an oriented train track with only converging switches. In particular the forward $\widetilde{\Phi}$-path starting at any given point $x \in \widetilde{\Phi}$ is well-defined. 

In \Cref{fig:dynamicplane}, which is a reproduction of \cite[Figure 6]{LMT21}, we illustrate a descending set in a dynamic plane, and the restriction of $\widetilde{\Phi}$ to it.

\begin{figure} 
    \centering
    \fontsize{14pt}{14pt}\selectfont
    \resizebox{!}{5.5cm}{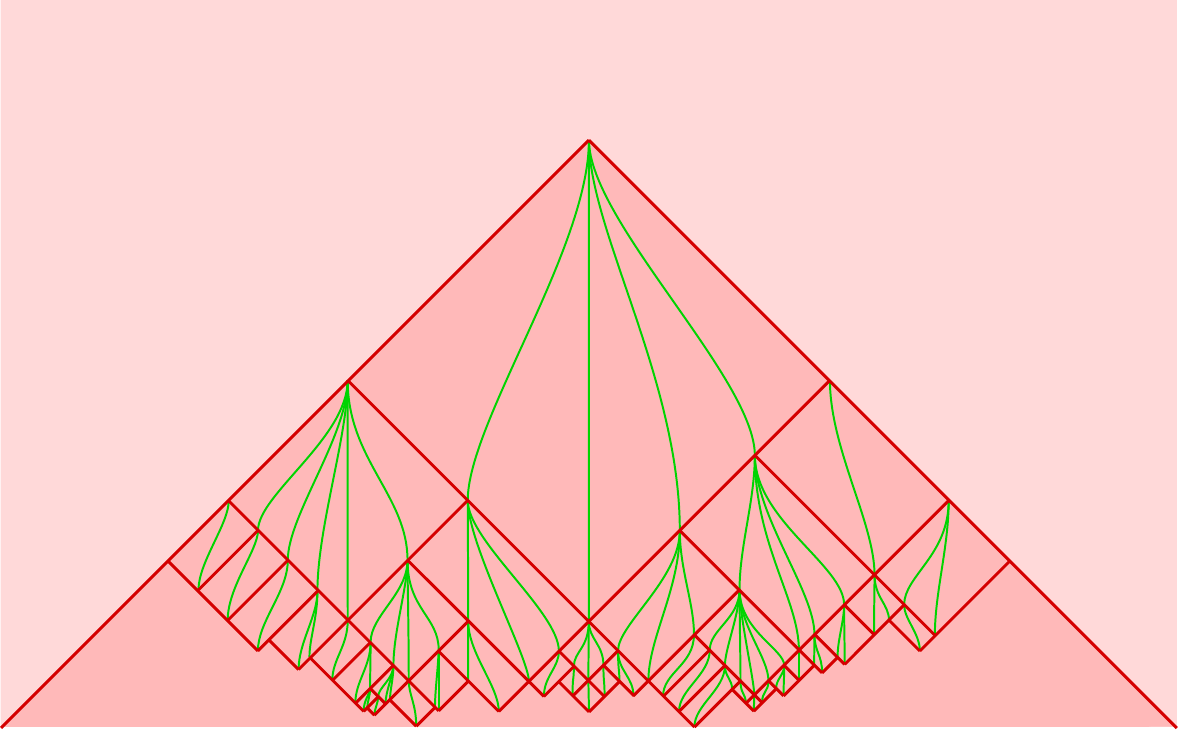}
    \caption{A descending set in a dynamic plane and the restriction of $\widetilde{\Phi}$ (green). This figure is a reproduction of \cite[Figure 6]{LMT21}.} 
    \label{fig:dynamicplane}
\end{figure}

Consider the case when $c$ is an AB cycle. Let $\widetilde{c}$ be a lift of $c$. Then there are two $\widetilde{\Phi}$-paths on the dynamic plane $D(\widetilde{c})$ for which every vertex of $\widetilde{c}$ lies along. The region bounded by the two $\widetilde{\Phi}$-paths is called an \textit{AB strip}. By \cite[Proposition 3.10]{LMT21}, for a fixed dynamic plane $D$, all the AB strips, if any, must be adjacent. We call the union of them the \textit{AB region} of $D$.

In \Cref{fig:ABregion}, which is a reproduction of \cite[Figure 6]{LMT21}, we illustrate a case where the AB region of a dynamic plane contains two AB strips.

\begin{figure} 
    \centering
    \fontsize{10pt}{10pt}\selectfont
    \resizebox{!}{5.5cm}{
\begingroup%
  \makeatletter%
  \providecommand\color[2][]{%
    \errmessage{(Inkscape) Color is used for the text in Inkscape, but the package 'color.sty' is not loaded}%
    \renewcommand\color[2][]{}%
  }%
  \providecommand\transparent[1]{%
    \errmessage{(Inkscape) Transparency is used (non-zero) for the text in Inkscape, but the package 'transparent.sty' is not loaded}%
    \renewcommand\transparent[1]{}%
  }%
  \providecommand\rotatebox[2]{#2}%
  \newcommand*\fsize{\dimexpr\f@size pt\relax}%
  \newcommand*\lineheight[1]{\fontsize{\fsize}{#1\fsize}\selectfont}%
  \ifx\svgwidth\undefined%
    \setlength{\unitlength}{270.41778062bp}%
    \ifx\svgscale\undefined%
      \relax%
    \else%
      \setlength{\unitlength}{\unitlength * \real{\svgscale}}%
    \fi%
  \else%
    \setlength{\unitlength}{\svgwidth}%
  \fi%
  \global\let\svgwidth\undefined%
  \global\let\svgscale\undefined%
  \makeatother%
  \begin{picture}(1,0.57554375)%
    \lineheight{1}%
    \setlength\tabcolsep{0pt}%
    \put(0,0){\includegraphics[width=\unitlength,page=1]{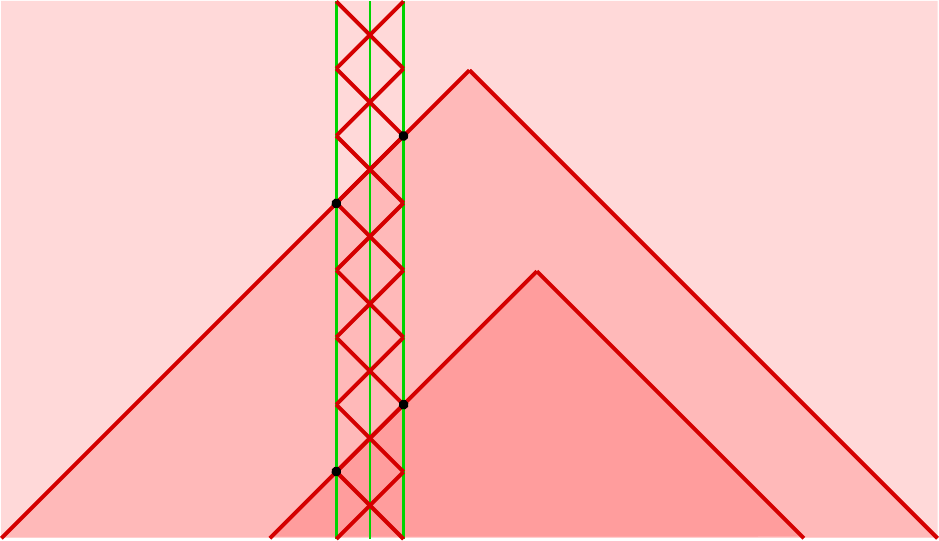}}%
    \put(0.30598609,0.07456086){\color[rgb]{0,0,0}\makebox(0,0)[lt]{\lineheight{1.25}\smash{\begin{tabular}[t]{l}$x_1$\end{tabular}}}}%
    \put(0.30640835,0.36089565){\color[rgb]{0,0,0}\makebox(0,0)[lt]{\lineheight{1.25}\smash{\begin{tabular}[t]{l}$y_1$\end{tabular}}}}%
    \put(0.43738514,0.39451537){\color[rgb]{0,0,0}\makebox(0,0)[lt]{\lineheight{1.25}\smash{\begin{tabular}[t]{l}$y_2$\end{tabular}}}}%
    \put(0.43598438,0.10524562){\color[rgb]{0,0,0}\makebox(0,0)[lt]{\lineheight{1.25}\smash{\begin{tabular}[t]{l}$x_2$\end{tabular}}}}%
  \end{picture}%
\endgroup%
}
    \caption{A dynamic plane containing two AB strips in its AB region. The $\widetilde{\Phi}$-paths starting at $x_1$ and $x_2$ never converge. This figure is a reproduction of \cite[Figure 10]{LMT21}.} 
    \label{fig:ABregion}
\end{figure}

\end{defn}

As promised, we will explain how these objects can be used to study closed orbits of pseudo-Anosov flows. For the rest of this section, we fix the following setting: Let $\phi$ be a pseudo-Anosov flow on an oriented closed 3-manifold $M$ without perfect fits relative to $\mathcal{C}$. Let $\Delta$ be a veering triangulation associated to $\phi$ on $M^\circ = M \backslash \bigcup \mathcal{C}$. We will use the notation in \Cref{sec:LMT}. 

Let $B$, $\Gamma$, $\Phi$ be the stable branched surface, the dual graph, and the flow graph of $\Delta$. Let $\widetilde{\Delta}$, $\widetilde{B}$, $\widetilde{\Gamma}$, and $\widetilde{\Phi}$ be the lift of the corresponding objects to the universal cover $\widetilde{M^\circ}$.

Let $O$ be the set of closed orbits of $\phi$, let $Z_\Gamma$ be the set of cycles of the dual graph $\Gamma$, and let $Z_\Phi$ be the set of cycles of the flow graph $\Phi$. 

\begin{rmk} \label{rmk:primitiveorbits}
In \cite{LMT21}, closed orbits include non-primitive orbits and cycles include non-primitive cycles. We will follow their convention in this section since it is more convenient for the discussion. However, to be consistent with this, we will temporarily abuse notation and include all the multiples of the orbits in $\mathcal{C}$ inside $\mathcal{C}$ as well.
\end{rmk}

Define a map $F_\Gamma : Z_\Gamma \to O$ as follows: Given a cycle $c$ of the dual graph $\Gamma$, let $g=[c] \in \pi_1(M^\circ)$, and let $\widetilde{c}$ be the lift of $c$ that is preserved by $g$. $\widetilde{c}$ is a bi-infinite path in $\widetilde{\Gamma}$. Suppose the vertices of $\widetilde{c}$ are, in order, $t_i$, for $i \in \mathbb{Z}$. Let $R_i$ be the tetrahedron rectangle in $\mathcal{P}$ corresponding to the tetrahedron of $\widetilde{\Delta}$ dual to $t_i$. Then since for each $i$, $R_{i+1}$ is taller than $R_i$ and $R_i$ is wider than $R_{i+1}$, the intersection $\bigcap_i R_i$ is a single point that is invariant under $g$. This point corresponds to an orbit of $\widehat{\phi}$. We take the quotient of this orbit by $g$ to get a closed orbit $\gamma$ of $\phi$ and set $F_\Gamma(c)=\gamma$. 

Similarly, we define a map $F_\Phi : Z_\Phi \to O$ as follows: Given a cycle $c$ of the flow graph $\Phi$, let $g=[c] \in \pi_1(M^\circ)$, and let $\widetilde{c}$ be the lift of $c$ that is preserved by $g$. $\widetilde{c}$ is a bi-infinite path in $\widetilde{\Phi}$. Suppose the vertices of $\widetilde{c}$ are, in order, $e_i$, for $i \in \mathbb{Z}$. Let $R_i$ be the tetrahedron rectangle in $\mathcal{P}$ corresponding to the tetrahedron of $\widetilde{\Delta}$ which has $e_i$ as its bottom edge. Then since for each $i$, $R_{i+1}$ is taller than $R_i$ and $R_i$ is wider than $R_{i+1}$, the intersection $\bigcap_i R_i$ is a single point that is invariant under $g$. This point corresponds to an orbit of $\widehat{\phi}$. We take the quotient of this orbit by $g$ to get a closed orbit $\gamma$ of $\phi$ and set $F_\Phi(c)=\gamma$. 

By construction, $c$ is homotopic to $F_\Gamma(c)$ for every cycle $c$ of $\Gamma$. Similarly for cycles of $\Phi$ and $F_\Phi$. Also note that if $c$ is a branch cycle of $\Gamma$, then $F_\Gamma(c)$ is some multiple of the element of $\mathcal{C}$ corresponding to the vertex of $\Delta$ that $c$ is homotopic into.

\begin{prop}[{\cite[Theorem 6.1]{LMT21}}] \label{prop:flowgraphencode}
We have the following properties of $F_\Phi$:
\begin{enumerate}
    \item If $\gamma \in \mathcal{C}$, then $|F_\Phi^{-1}(\gamma)| \leq 2\nu$
    \item If $\gamma \not\in \mathcal{C}$ and $F_\Gamma^{-1}(\gamma)$ does not contain AB-cycles, then $|F_\Phi^{-1}(\gamma)| = 1$.
    \item If $\gamma \not\in \mathcal{C}$ and $F_\Gamma^{-1}(\gamma)$ contains AB-cycles, then $1 \leq |F_\Phi^{-1}(\gamma^2)| \leq \delta$ and the elements of $F_\Phi^{-1}(\gamma^2)$ have the same length.
\end{enumerate}
\end{prop}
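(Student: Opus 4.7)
The plan is to handle the three cases separately, using the dynamic plane machinery of \Cref{defn:dynamicplane} as the central tool. In all three cases, a cycle $c \in F_\Phi^{-1}(\gamma)$ lifts to a $g$-invariant bi-infinite path $\widetilde{c}$ in $\widetilde{\Phi}$, where $g = [\gamma] \in \pi_1(M^\circ)$, and the associated nested tetrahedron rectangles converge to the orbit point in $\mathcal{P}$ corresponding to the chosen lift of $\gamma$. The strategy is to localize $\widetilde{c}$ inside a well-understood piece of $\widetilde{\Phi}$ determined by $\gamma$, and then count $g$-invariant (or $g^2$-invariant) paths in that piece.

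For part (1), when $\gamma \in \mathcal{C}$, the orbit point is an element $s \in \mathcal{S}$ and the tetrahedra traversed by $\widetilde{c}$ all have an ideal vertex at the cusp $T$ of $\widetilde{\Delta}$ corresponding to $s$. I would translate the problem into the boundary triangulation $\partial \widetilde{\Delta}$ at $T$: vertices of $\widetilde{c}$ are edges of $\widetilde{\Delta}$ with an endpoint at $T$, which are vertices of $\partial \widetilde{\Delta}$, and arrows of $\widetilde{\Phi}$ between them correspond to specific edges in $\partial \widetilde{\Delta}$. Analyzing which edge-paths of $\partial \widetilde{\Delta}$ lift to infinite $\widetilde{\Phi}$-paths asymptotic to $T$, I expect to see that each such path must ultimately track a ladderpole curve, so each of the $2\nu$ ladderpole curves contributes at most one cycle to $F_\Phi^{-1}(\gamma)$.

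For part (2), pick any $c_0 \in F_\Gamma^{-1}(\gamma)$ and form the dynamic plane $D(\widetilde{c_0})$, which is preserved by $g$. I would first show that any $g$-invariant $\widetilde{\Phi}$-path $\widetilde{c}$ mapping to $\gamma$ lies entirely in $D(\widetilde{c_0})$, by comparing descending sets of its vertices with those of $\widetilde{c_0}$. Under the hypothesis that no AB-cycles exist in $F_\Gamma^{-1}(\gamma)$, the restriction of $\widetilde{\Phi}$ to $D(\widetilde{c_0})$ is a train track with only converging switches and no AB strips, so any two forward $\widetilde{\Phi}$-paths in this plane eventually coincide. This forces any two $g$-invariant $\widetilde{\Phi}$-paths in $D(\widetilde{c_0})$ to agree past some vertex, and hence everywhere by $g$-invariance, giving $|F_\Phi^{-1}(\gamma)| = 1$.

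For part (3), the AB region of $D(\widetilde{c_0})$ is a union of adjacent AB strips, each bounded by two $\widetilde{\Phi}$-paths that never converge. The plan is to show that any $g$-invariant $\widetilde{\Phi}$-path mapping to $\gamma$ must lie inside this AB region, since outside of it all paths collapse into a single descending orbit and cannot yield multiple cycles. The total transverse width of the AB region, measured as the number of parallel $\widetilde{\Phi}$-paths it contains, is controlled by the stack length bound $\delta$ via \Cref{prop:fantet}, giving the upper bound $\delta$. The passage from $\gamma$ to $\gamma^2$ is forced because $g$ may interchange the two bounding $\widetilde{\Phi}$-paths of an AB strip, while $g^2$ preserves each individual path; equal lengths of the resulting cycles then follow because $g^2$ acts by the same translation on each parallel path. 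The step I expect to be the main obstacle is this last one: precisely identifying the width of the AB region with the combinatorial quantity $\delta$, and verifying that every $g$-invariant $\widetilde{\Phi}$-path with $F_\Phi$-image $\gamma$ really is captured by the AB region rather than by some other configuration in the dynamic plane.
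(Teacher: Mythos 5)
Your proposal takes a genuinely different route from the paper: you sketch a from-scratch proof via dynamic planes, whereas the paper's proof is essentially a reduction to the statement of \cite[Theorem 6.1]{LMT21}. The paper's only original content is (a) reconciling the map $F_\Phi$ defined here with the map $\mathcal{F}$ of \cite{LMT21}, whose target includes multiples of ladderpole curves rather than the cusp orbits themselves, via a projection $O^+ \to O$; (b) the observation that ``$\gamma$ is homotopic to an AB cycle'' is equivalent to ``$F_\Gamma^{-1}(\gamma)$ contains an AB cycle,'' which is needed to match the hypotheses of the cited theorem and which uses \Cref{lemma:nohomotopicorbits} (hence the no-perfect-fits hypothesis) in an essential way; and (c) extracting from the \emph{proof} of the cited theorem that the elements of $F_\Phi^{-1}(\gamma^2)$ are common multiples of the boundary paths of the AB strips on a single dynamic plane, which gives the equal-lengths claim. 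What you are outlining is, in effect, the proof of the cited theorem itself; your localization into a dynamic plane, the convergence of forward paths in the absence of AB strips, and the identification of the $g$-invariant paths with the boundary paths of the AB region (with $\gamma^2$ handling the case where $g$ reverses the transverse order of those paths) all match the actual argument in \cite{LMT21}.

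As a self-contained proof, however, the sketch has real gaps where the quantitative content lives. In part (2) you only argue uniqueness; existence of at least one $g$-invariant bi-infinite $\widetilde{\Phi}$-path requires the eventual-periodicity argument for forward $\widetilde{\Phi}$-paths on the dynamic plane (the same mechanism used in the proof of \Cref{prop:dualgraphflowgraph}). In part (1) the reduction to ladderpole curves and the count $2\nu$, and in part (3) the bound of $\delta$ on the number of boundary paths of the AB region, both rest on specific combinatorial lemmas about chains of sectors and the boundary triangulation (e.g.\ that chains of sectors in a dynamic plane have length at most $\delta-1$), which you flag as ``expected'' rather than establish. Also, in part (3) it is not that every $g$-invariant path lies \emph{inside} the AB region; the $g^2$-invariant paths are exactly the finitely many boundary paths of the adjacent AB strips, and interior paths converge into them. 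None of these issues indicates a wrong approach, but filling them in amounts to reproving a substantial portion of \cite[Section 6]{LMT21}, which the paper deliberately avoids by citation.
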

\begin{proof}
Let $O^+$ be the set of closed orbits of $\phi$ except for those in $\mathcal{C}$, union the set of multiples of red ladderpole curves at all the vertices of $\Delta$. There is a natural projection $p: O^+ \to O$ defined by sending orbits to themselves and multiples of a red ladderpole curve at a vertex of $\Delta$ to the corresponding element of $\mathcal{C}$ that it is homotopic to. In \cite[Section 6]{LMT21}, a map $\mathcal{F}: Z \to O^+$ is defined, such that $p \circ \mathcal{F} = F$; indeed, the definition of our $F$ is essentially the same as $\mathcal{F}$, except for a way of `lifting' $F(c)$ when it is an element of $\mathcal{C}$. 

We also point out that an orbit $\gamma$ of $\phi$ being homotopic to an AB cycle is equivalent to $F_\Gamma^{-1}(\gamma)$ containing AB-cycles. The backward implication is clear. For the forward implication, if $\gamma$ is homotopic to an AB-cycle $c$, then $\gamma$ is homotopic to $F_\Gamma(c)$, hence by \Cref{lemma:nohomotopicorbits}, $\gamma=F_\Gamma(c)$.

With this understanding, most of the proposition follows from the statement of \cite[Theorem 6.1]{LMT21}. The last part of (3) follows from the fact that the elements of $F_\Phi^{-1}(\gamma^2)$ are exactly some common multiple of the images of the boundary components of the AB strips on some dynamic plane. This fact is in turn established in the proof of \cite[Theorem 6.1]{LMT21}.
\end{proof}

We will use a partially defined multi-function $h: Z_\Gamma \dashrightarrow Z_\Phi$ to complete the commutative diagram 

\begin{center}
\begin{tikzcd}
Z_\Gamma \arrow[rd, "F_\Gamma"'] \arrow[rr, "h", dashed] & & Z_\Phi \arrow[ld, "F_\Phi"] \\
 & O & 
\end{tikzcd}
\end{center}

Given a cycle $c$ of $\Gamma$ that is not a branch cycle, let $g=[c] \in \pi_1(M^\circ)$, and let $\widetilde{c}$ be the lift of $c$ that is preserved by $g$. Consider the dynamic plane $D$ associated to $\widetilde{c}$ and consider the restriction of $\widetilde{\Phi}$ to $D$. If there is a $g$-invariant bi-infinite $\widetilde{\Phi}$-path on $D$, then its quotient by $g$ is a cycle $c'$ of $\Phi$ that is homotopic to $c$. Such a $g$-invariant bi-infinite $\widetilde{\Phi}$-path may not exist and may not be unique in general. In any case, we set $h(c)$ to be the set of cycles $c'$ that are obtained this way. 

\begin{lemma} \label{lemma:encodecommute}
Suppose $c$ is a cycle of the dual graph $\Gamma$ that is not a branch cycle. For every $c' \in h(c)$, $F_\Gamma(c)=F_\Phi(c')$.
\end{lemma}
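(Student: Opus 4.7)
The plan is to deduce the equality from \Cref{lemma:nohomotopicorbits}, by exhibiting $F_\Gamma(c)$ and $F_\Phi(c')$ as two closed orbits of $\phi$ outside $\mathcal{C}$ whose conjugacy classes in $\pi_1(M^\circ)$ coincide.

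First, I would observe that $[c] = [c'] \in \pi_1(M^\circ)$. By the very definition of the partial multi-function $h$, the cycle $c' \in h(c)$ is the quotient by $g = [c]$ of a $g$-invariant bi-infinite $\widetilde{\Phi}$-path $\widetilde{c'}$ lying in the dynamic plane $D(\widetilde{c})$, so $[c'] = g$. By the constructions of $F_\Gamma$ and $F_\Phi$, the closed orbits $F_\Gamma(c)$ and $F_\Phi(c')$ arise as the quotients by $g$ of the $g$-invariant orbits of $\widehat{\phi}$ in $\widetilde{M^\circ}$ corresponding to the points $p = \bigcap_i R_i$ and $p' = \bigcap_j R'_j$ of $\mathcal{P}$. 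In particular, each of $F_\Gamma(c)$ and $F_\Phi(c')$ represents the conjugacy class of $g$ in $\pi_1(M^\circ)$.

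Second, I would verify $F_\Gamma(c), F_\Phi(c') \notin \mathcal{C}$ by showing $p, p' \notin \mathcal{S}$. The non-branch cycle hypothesis on $c$ is precisely what is needed here: a branch cycle is exactly the situation in which all the rectangles $R_i$ share a common $\mathcal{S}$-corner (the one corresponding to the vertex of $\Delta$ around which $c$ winds), so in the non-branch case the limit point $p$ must lie strictly inside each $R_i$. By \Cref{defn:rectangles}, no element of $\mathcal{S}$ lies in the interior of a rectangle, so $p \notin \mathcal{S}$ and the corresponding orbit of $\widehat{\phi}$ is genuinely an orbit in $\widetilde{M^\circ}$, producing a closed orbit $F_\Gamma(c)$ of $\phi$ not in $\mathcal{C}$. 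The same reasoning applies to $p'$: the dynamic plane $D(\widetilde{c})$ is non-degenerate whenever $c$ is not a branch cycle, which prevents all the rectangles $R'_j$ from collapsing onto a single $\mathcal{S}$-corner, so $p' \notin \mathcal{S}$ and $F_\Phi(c') \notin \mathcal{C}$.

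Finally, I would apply \Cref{lemma:nohomotopicorbits} with $k_1 = k_2 = 1$: both $F_\Gamma(c)$ and $F_\Phi(c')$ are closed orbits of $\phi$ outside $\mathcal{C}$ representing the conjugacy class of $g$ in $\pi_1(M \backslash \bigcup \mathcal{C})$, so the lemma forces $F_\Gamma(c) = F_\Phi(c')$. The main technical step is the second one: translating the non-branch cycle hypothesis into the statement that neither limit point lies in $\mathcal{S}$, which is exactly what allows the no-perfect-fits rigidity of \Cref{lemma:nohomotopicorbits} to be invoked.
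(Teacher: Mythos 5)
Your proof is correct and follows essentially the same route as the paper: both orbits represent the conjugacy class of $g=[c]$ in $\pi_1(M^\circ)$, so \Cref{lemma:nohomotopicorbits} forces them to coincide. The only difference is that you spell out the verification that neither orbit lies in $\mathcal{C}$ (which the paper leaves implicit, having noted just beforehand that branch cycles are exactly the ones mapping into $\mathcal{C}$), and that added check is a reasonable elaboration rather than a new idea.
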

\begin{proof}
By construction, $c$ is homotopic to any element $c'$ in $h(c)$, so $F_\Gamma(c)$ is homotopic to $F_\Phi(c')$ in $M^\circ$. By \Cref{lemma:nohomotopicorbits}, $F_\Gamma(c)=F_\Phi(c')$. 
\end{proof}

\subsection{Complexity of closed orbits} \label{subsec:orbitcomplexity}

\begin{defn} \label{defn:orbitcomplexity}
Let $\gamma$ be a closed orbit of $\phi$ which is not an element of $\mathcal{C}$. Take $c \in F_\Phi^{-1}(\gamma^2)$, which exists by \Cref{prop:flowgraphencode}. We define the \textit{flow graph complexity of $\gamma$ with respect to $\mathcal{C}$}, denoted by $\mathfrak{c}_\mathcal{C}(\gamma)$, to be $\frac{1}{2}$ times the length of $c$. By \Cref{prop:flowgraphencode}, $\mathfrak{c}_\mathcal{C}(\gamma)$ is well-defined.

When the collection $\mathcal{C}$ is clear from context, we will just write $\mathfrak{c}(\gamma)$ and call it the \textit{flow graph complexity} of $\gamma$.
\end{defn}

There is a natural motivation for making this definition. In \cite[Section 5]{AT22}, it is shown that using the veering triangulation $\Delta$ on $M^\circ$, one can construct a pseudo-Anosov flow $\phi'$ on $M$ with a collection of \textit{core orbits} $\mathcal{C}'$, such that $M \backslash \bigcup \mathcal{C}' = M^\circ$, and such that $\phi'$ is without perfect fits relative to $\mathcal{C}'$. Moreover, by \cite[Proposition 5.15]{AT22}, $\phi'$ admits a Markov partition encoded by the \textit{reduced flow graph} $\Phi_{\red}$ of $\Delta$, which is defined by deleting the infinitesimal cycles of $\Phi$. 

It is highly speculated, even though a complete proof has not been written down, that $\phi$ and $\phi'$ are orbit equivalent. Assuming that this is true for the moment, then $\Phi_{\red}$ encodes a Markov partition for $\phi$. By \cite[Lemma 3.8]{AT22}, $c$ in \Cref{defn:orbitcomplexity} can be chosen to be a cycle in $\Phi_{\red}$. Thus the flow graph complexity of $\gamma$ essentially records the length of a cycle needed to represent $\gamma$ in this particular Markov partition.

Furthermore, such a Markov partition is canonically associated to the flow if it is without perfect fits (for one can choose $\mathcal{C}$ to be the set of singular orbits), and canonically associated to the flow and the choice of $\mathcal{C}$ in general. Hence this flow graph complexity would be, at least in the case of no perfect fits, a canonical way of measuring the complexity of closed orbits.

\begin{rmk} \label{rmk:complexityofsingorbit}
A natural way to extend the definition of flow graph complexity to all closed orbits might be to define the flow graph complexity of $\gamma \in \mathcal{C}$ to be the sum over the lengths of all elements in $F_\Phi^{-1}(\gamma^n)$ then divide by $n$, where $n$ is the number of prongs at $\gamma$. The intuition is that there should be cycles in the Markov partition mentioned above that together $n$-fold cover $\gamma$, hence we can average them out.

However, we will not need to deal with this case in this paper, so we will leave the definition open for future interpretation.
\end{rmk}

A significance of the flow graph complexity is that it controls how many times the closed orbit can interact with the rectangles, in the sense of \Cref{lemma:complexityrectbound} below. To establish that, we show the following proposition, which is a quantitative upgrade of \cite[Proposition 3.15]{LMT21}.

\begin{prop} \label{prop:dualgraphflowgraph}
Let $c$ be a cycle of $\Gamma$ that is not a branch cycle. Suppose $c$ is of length $L$, then the length of any element in $h(c)$ is at most $(2L-1)L$ and at least $\frac{L}{\delta^2-\delta+1}$.
\end{prop}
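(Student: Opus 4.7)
The plan is to prove both bounds by comparing the flow graph path $\widetilde{c'}$ with the dual graph path $\widetilde{c}$ inside the dynamic plane $D(\widetilde{c})$. Let $g = [c] \in \pi_1(M^\circ)$, let $\widetilde{c}$ be the $g$-invariant lift of $c$ with vertices $\{t_i\}_{i \in \mathbb{Z}}$ satisfying $g \cdot t_i = t_{i+L}$, let $c'$ be a chosen element of $h(c)$, and let $\widetilde{c'}$ be its $g$-invariant lift with vertices $\{e_j\}_{j \in \mathbb{Z}}$ satisfying $g \cdot e_j = e_{j+K}$. The goal is to sandwich $K$ between $\frac{L}{\delta^2 - \delta + 1}$ and $(2L-1)L$.

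For the lower bound, the idea is that a single flow graph edge corresponds to a step through one tetrahedron of $\widetilde{\Delta}$, whereas the dual graph can accumulate many steps within the stacks of tetrahedra around each edge of $\widetilde{\Delta}$ before being forced to advance. More precisely, between two consecutive vertices $e_j$ and $e_{j+1}$ of $\widetilde{c'}$, I would show that the relevant portion of $\widetilde{c}$ is constrained to traverse tetrahedra in the two stacks adjacent to the edges of $\widetilde{\Delta}$ corresponding to $e_j$ and $e_{j+1}$. Applying \Cref{prop:fantet} to bound the length of each such stack by $\delta$, together with a case analysis on the colors of the top and bottom edges of the traversed tetrahedra, should yield the count of at most $\delta^2 - \delta + 1$ intermediate dual graph edges. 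Summing over a $g$-period then gives $L \leq (\delta^2 - \delta + 1) K$.

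For the upper bound, the plan is to exploit the fact that a $g$-invariant flow graph path cannot revisit sectors arbitrarily often without completing a $g$-period. The dynamic plane $D$ is tiled modulo $g$ by the $L$ descending quarter-planes $\Delta(t_0), \dots, \Delta(t_{L-1})$, each of which consists of sectors of $\widetilde{B}$ carrying exactly one flow graph vertex at its top corner by \Cref{defn:flowgraph}. Since $\widetilde{\Phi}|_D$ is an oriented train track with only converging switches, the path $\widetilde{c'}$ cannot loop back, and hence the number of sectors it visits inside a single $\Delta(t_i)$ before the $g$-action pushes it into the next descending set is controlled. A combinatorial count on a fundamental domain of $D/g$ should give that $\widetilde{c'}$ hits at most $2L-1$ sectors within each $\Delta(t_i)$ per period; multiplying by $L$ yields $K \leq (2L-1)L$.

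The main obstacle is the combinatorial accounting in both bounds. For the upper bound, the delicate point is to use $g$-invariance to synchronize $\widetilde{c'}$ with the tiling structure of $D$, because without $g$-invariance a flow graph path inside a single quarter-plane could extend arbitrarily far; the argument should leverage that each descending set contributes at most some bounded share of sectors to a single period of the path. For the lower bound, the harder task is to extract the sharp constant $\delta^2 - \delta + 1$ from \Cref{prop:fantet}, rather than a cruder linear bound in $\delta$, which I expect to come from treating the two stacks adjacent to consecutive flow graph vertices as essentially independent contributions of size at most $\delta$, minus a small overlap from tetrahedra that belong to both stacks.
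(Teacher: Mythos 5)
Your setup---working in the dynamic plane $D(\widetilde{c})$, slicing it by the descending sets $\Delta(t_i)$, and using that $\widetilde{\Phi}|_D$ is a converging train track---is the right frame, but both counts have gaps exactly where the real work lies. For the upper bound, an element of $h(c)$ is by definition the quotient of a $g$-invariant bi-infinite $\widetilde{\Phi}$-path on $D$, and such a path need not pass through any vertex of $\widetilde{c}$; you cannot count the sectors it visits in each $\Delta(t_i)$ before you have located it. The paper does this by following the forward $\widetilde{\Phi}$-path $\beta$ from $v_L$, showing that $\beta$ and $g^{-1}\cdot\beta$ converge at some point $w$, and estimating the length of the segment from $w$ to $g\cdot w$ via a telescoping sum controlled by a contraction lemma (\Cref{lemma:flowgraphcontraction}). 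Your per-descending-set bound of $2L-1$ is also asserted without a mechanism: the actual estimate is that the $\widetilde{\Phi}$-path from $v_0$ to $\partial\Delta(v_l)$ has length at most $(l-1)^2$, proved by induction using the fact that a $\widetilde{\Phi}$-path starting on the $k^{\text{th}}$ chain of sectors enters the $(k-1)^{\text{th}}$ within two edges---so the contribution per step grows linearly along the path rather than being uniformly of size $O(L)$. The factor $(2L-1)L$ in the statement is not ``$2L-1$ sectors per descending set times $L$''; it arises as $(L-1)^2+L$ together with passing to $c^2$ when $g$ acts on $D$ in an orientation-reversing way, a case you do not address.

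For the lower bound, the claim that the portion of $\widetilde{c}$ between consecutive flow-graph vertices is confined to the two stacks adjacent to the corresponding edges of $\widetilde{\Delta}$ is unjustified, and it is not how the constant arises. A long run of branching turns of $\widetilde{c}$ keeps the vertices $v_{i-k},\dots,v_i$ on a single branch line while the flow graph path barely advances; the correct count is that $k$ consecutive dual-graph edges along a branch line are covered by at least $\lceil k/\delta(\delta-1)\rceil$ chains of sectors (each chain has length at most $\delta-1$ by \cite[Lemma 6.8]{LMT21}, and each sector sits on top of at most $\delta$ edges of the branch line), and the $\widetilde{\Phi}$-path must contribute at least one edge per chain. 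One then applies this at every antibranching turn of $\widetilde{c}$, with a separate case when the $\widetilde{\Phi}$-path has already left that branch line. Your ``two stacks of size $\delta$ minus an overlap'' accounting reproduces the number $\delta^2-\delta+1$ but not the argument, so as written the lower bound does not go through.
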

\begin{proof}
Let $g=[c] \in \pi_1(M^\circ)$, and let $\widetilde{c}$ be the lift of $c$ that is preserved by $g$. Consider the dynamic plane $D$ determined by $\widetilde{c}$ and consider the restriction of $\widetilde{\Phi}$ on $D$. Recall that if the vertices of $\widetilde{c}$ are, in order, $v_i$, for $i \in \mathbb{Z}$, then $D = \bigcup_i \Delta(v_i)$. The boundary of each $\Delta(v_i)$ is a union of two $\widetilde{\Gamma}$-rays, and we can measure the distance between two points on $\partial \Delta(v_i)$ by the number of edges between them on $\partial \Delta(v_i)$. It is argued in \cite[Lemma 3.7]{LMT21} that if $x_1,x_2$ are two points on $\partial \Delta(v_i)$, and if $y_1,y_2$ are the intersections of the $\widetilde{\Phi}$-paths starting at $x_1, x_2$ with $\partial \Delta(v_{i+1})$ respectively, then the distance between $y_1,y_2$ is less than or equal to that between $x_1,x_2$. We refer to this property as `following $\widetilde{\Phi}$-paths contracts distances'.

We first make the following claim.

\begin{lemma} \label{lemma:upperbounds}
If we have a $\widetilde{\Gamma}$-path $(v_0,...,v_l)$ on $D$, then the $\widetilde{\Phi}$-path starting at $v_0$ and ending on $\partial \Delta(v_l)$ has length at most $(l-1)^2$ and has endpoint at most $l$ edges away from $v_0$. 
\end{lemma}

\begin{proof}[Proof of \Cref{lemma:upperbounds}]
We apply induction on $l$. For $l=1$, this is clear. For $l \geq 2$, by applying the lemma to the path $(v_0,...,v_{l-1})$, we know that the $\widetilde{\Phi}$-path $\alpha$ starting at $v_0$ and ending on $\partial \Delta(v_{l-1})$ has length $\leq (l-2)^2$ and has endpoint $\leq l-1$ edges away from $v_{l-1}$. If the endpoint of $\alpha$ on $\partial \Delta(v_{l-1})$ lies on $\partial \Delta(v_l)$ as well, then $\alpha$ is also the $\widetilde{\Phi}$-path starting at $v_0$ and ending on $\partial \Delta(v_l)$, hence has length $\leq (l-2)^2$ and has endpoint $\leq 1+(l-1)=l$ edges away from $v_0$. Otherwise, since following $\widetilde{\Phi}$-paths contracts distance, the $\widetilde{\Phi}$-path $\alpha'$ starting at $v_0$ and ending on $\partial \Delta(v_l)$ has endpoint $\leq l-2$ edges away from $v_0$. To compute the length of $\alpha'$, we recall the following definition from \cite{LMT21}.

\begin{defn} \label{defn:chainofsectors}
A \textit{chain} of sectors in a dynamic plane is a collection of sectors $\sigma_1,...,\sigma_n$ such that an entire bottom side of $\sigma_i$ is identified with a top side of $\sigma_{i+1}$ for each $i$, and there is a branch line that contains a top side of each $\sigma_i$. In this case we call $n$ the \textit{length} of the chain of sectors.
\end{defn}

Returning to the proof of the lemma, we can divide the side of $\partial \Delta(v_{l-1})$ not on $\partial \Delta(v_l)$ into bottom sides of chains of sectors. By the proof of \cite[Claim 6.10]{LMT21}, the $\widetilde{\Phi}$-path starting at any point on the $k^{\text{th}}$ chain enters the $(k-1)^{\text{th}}$ chain within $2$ edges. For the reader's convenience, we demonstrate the pictorial proof of this in \Cref{fig:chainofsectors}, which is a reproduction of \cite[Figure 30]{LMT21}.

\begin{figure} 
    \centering
    \resizebox{!}{4.5cm}{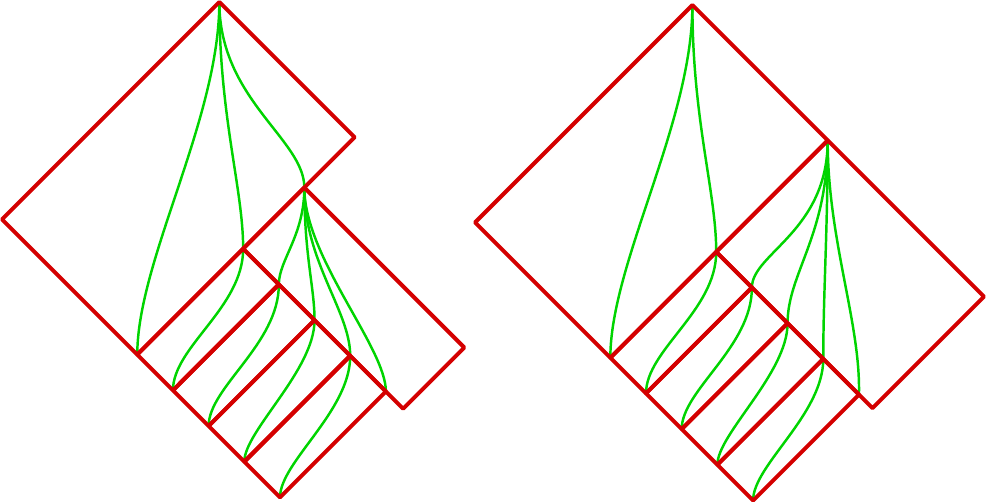}
    \caption{The $\widetilde{\Phi}$-path starting at any point on the $k^{\text{th}}$ chain enters the $(k-1)^{\text{th}}$ chain within $2$ edges. This figure is a reproduction of \cite[Figure 30]{LMT21}.} 
    \label{fig:chainofsectors}
\end{figure}

From this, we see that $\widetilde{\Phi}$-path starting at the endpoint of $\alpha$ must meet $\partial \Delta(v_l)$ within $2(l-2)+1$ edges, hence the length of $\alpha'$ is $\leq (l-2)^2+2(l-2)+1 = (l-1)^2$.
\end{proof}

Applying this lemma to the vertices $(v_0,...,v_L)$ of $\widetilde{c}$, we see that the $\widetilde{\Phi}$-path $\alpha$ starting at $v_0$ and ending on $\partial \Delta(v_L)$ has length $\leq (L-1)^2$ and has endpoint $\leq L$ edges away from $v_0$.

Next, we claim that if $\widetilde{c}$ takes an antibranching turn at $v_{L-1}$, which we can always arrange for by relabeling the vertices, then $\alpha$ has length $\geq \frac{L}{\delta^2-\delta+1}$.

To see this, suppose $\widetilde{c}$ takes an antibranching turn at some $v_i$ but takes a branching turn at $v_{i-k+1},...,v_{i-1}$ for $k \geq 1$. We illustrate this scenario in \Cref{fig:dualgraphflowgraph1}.

\begin{figure} 
    \centering
    \fontsize{16pt}{16pt}\selectfont
    \resizebox{!}{4.5cm}{
\begingroup%
  \makeatletter%
  \providecommand\color[2][]{%
    \errmessage{(Inkscape) Color is used for the text in Inkscape, but the package 'color.sty' is not loaded}%
    \renewcommand\color[2][]{}%
  }%
  \providecommand\transparent[1]{%
    \errmessage{(Inkscape) Transparency is used (non-zero) for the text in Inkscape, but the package 'transparent.sty' is not loaded}%
    \renewcommand\transparent[1]{}%
  }%
  \providecommand\rotatebox[2]{#2}%
  \newcommand*\fsize{\dimexpr\f@size pt\relax}%
  \newcommand*\lineheight[1]{\fontsize{\fsize}{#1\fsize}\selectfont}%
  \ifx\svgwidth\undefined%
    \setlength{\unitlength}{507.99048859bp}%
    \ifx\svgscale\undefined%
      \relax%
    \else%
      \setlength{\unitlength}{\unitlength * \real{\svgscale}}%
    \fi%
  \else%
    \setlength{\unitlength}{\svgwidth}%
  \fi%
  \global\let\svgwidth\undefined%
  \global\let\svgscale\undefined%
  \makeatother%
  \begin{picture}(1,0.50112531)%
    \lineheight{1}%
    \setlength\tabcolsep{0pt}%
    \put(0,0){\includegraphics[width=\unitlength,page=1]{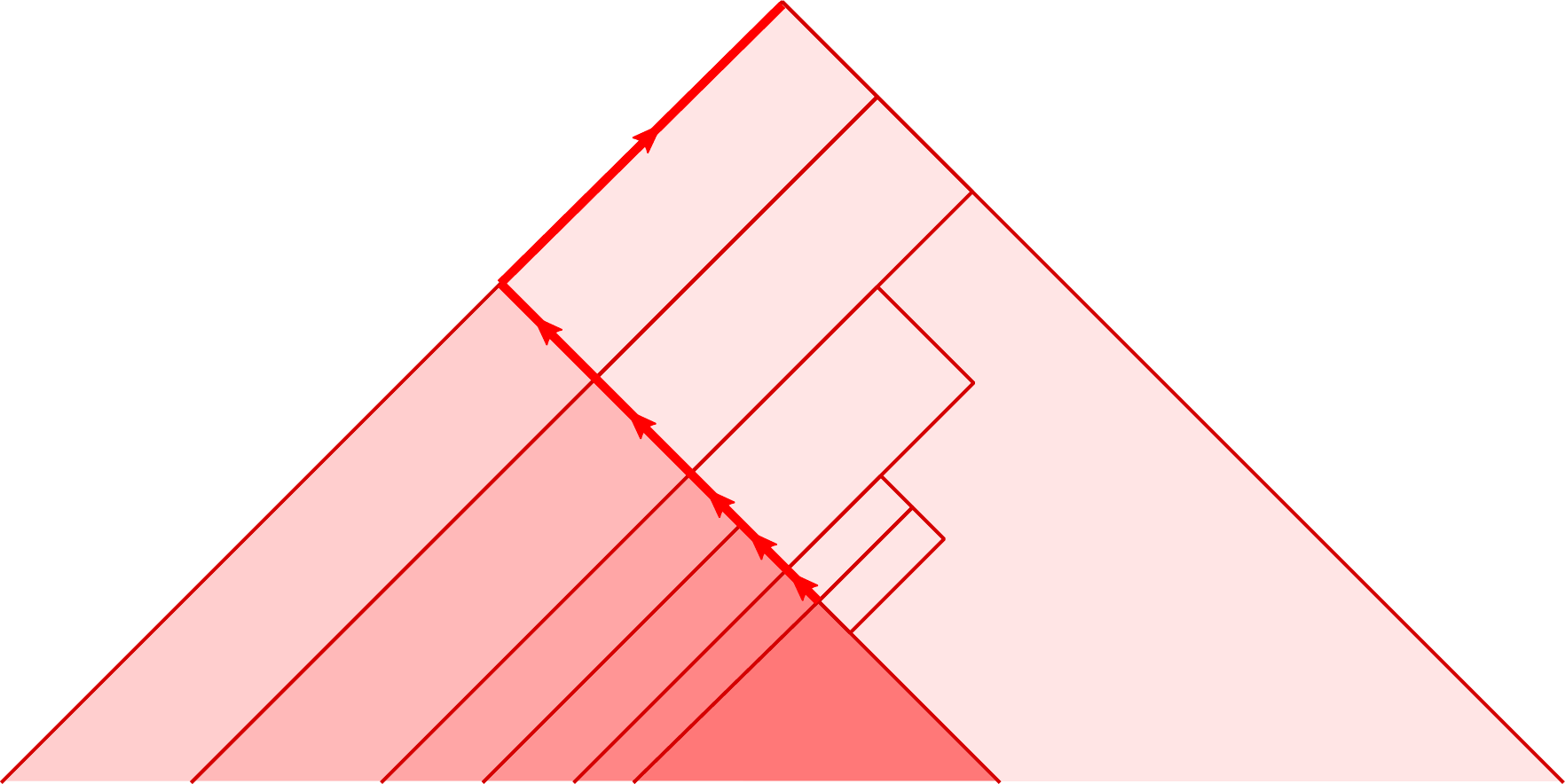}}%
    \put(0.28131456,0.32596929){\color[rgb]{0,0,0}\makebox(0,0)[lt]{\lineheight{1.25}\smash{\begin{tabular}[t]{l}$v_i$\end{tabular}}}}%
  \end{picture}%
\endgroup%
}
    \caption{Bounding the length of $\alpha$ from below by inspecting the situation at each antibranching turn.} 
    \label{fig:dualgraphflowgraph1}
\end{figure}

If $\alpha \cap \partial \Delta(v_{i-k})$ lies on the same branch line as $v_{i-k},...,v_i$, then the portion of $\alpha$ between $\partial \Delta(v_{i-k})$ and $\partial \Delta(v_{i+1})$ has length at least the number of chains between $v_{i-k},...,v_i$. But each sector in such a chain sits on top of $\leq \delta$ edges on the branch line, and it is shown in \cite[Lemma 6.8]{LMT21} that any chain of sectors in a dynamic plane has length $\leq \delta-1$. So this length is $\geq \lceil \frac{k}{\delta(\delta-1)} \rceil \geq \frac{k+1}{\delta^2-\delta+1}$.

If $\alpha \cap \partial \Delta(v_{i-k})$ lies on the different branch line as $v_{i-k},...,v_i$, let $j$ be the smallest positive number so that $\alpha \cap \partial \Delta(v_{i-j})$ lies on the different branch line as $v_{i-k},...,v_i$. Then the portion of $\alpha$ between $\partial \Delta(v_{i-s})$ and $\partial \Delta(v_{i-s+1})$ has length at least one for $k \geq s \geq j$, and the portion of $\alpha$ between $\partial \Delta(v_{i+1})$ and $\partial \Delta(v_{i-j+1})$ is at least $\lceil \frac{j-1}{\delta(\delta-1)} \rceil$ by the above argument. So the portion of $\alpha$ between $\partial \Delta(v_{i-k})$ and $\partial \Delta(v_{i+1})$ has length $\geq (k-j+1)+\lceil \frac{j-1}{\delta(\delta-1)} \rceil \geq \frac{k+1}{\delta^2-\delta+1}$. 

Applying this observation to every antibranching turn of $\widetilde{c}$ between $v_0$ and $v_L$ (or every other antibranching turn in a sequence of consecutive antibranching turns), we get the lower bound on the length of $\alpha$.

We now restrict to the case when $g$ acts on $D$ in an orientation preserving way. Let $\beta$ be the infinite $\widetilde{\Phi}$-path starting at $v_L$. We claim that $\beta$ and $g^{-1} \cdot \beta$ must converge at some point $w$. This follows from the proof of \cite[Lemma 3.7]{LMT21} if $v_0$ lies outside of the AB region of $D$, and follows from the fact that the AB region is $g$-invariant otherwise. The portion of $\beta$ between $w$ and $g \cdot w$ descends down to a $\Phi$-cycle in $h(c)$. Thus it remains to bound the length of this portion of $\beta$.

Consider the descending sets $g^i \cdot \Delta(v_0)$, $i \geq 0$. Suppose $w$ lies between $g^{r-1} \cdot \partial \Delta(v_0)$ and $g^r \cdot \partial \Delta(v_0)$, then $g \cdot w$ lies between $g^r \cdot \partial \Delta(v_0)$ and $g^{r+1} \cdot \partial \Delta(v_0)$. Let $\beta_i$ be the portion of $\beta$ between $g^i \cdot \partial \Delta(v_0)$ and $g^{i+1} \cdot \partial \Delta(v_0)$. Let $\beta'_{r-1}$ be the portion of $\beta$ between $g^{r-1} \cdot \partial \Delta(v_0)$ and $w$, and let $\beta'_r$ be the portion of $\beta$ between $g^r \cdot \partial \Delta(v_0)$ and $g \cdot w$. Then the length of the portion of $\beta$ between $w$ and $g \cdot w$ is 
\begin{align*}
    l(\beta_{r-1})-l(\beta'_{r-1})+l(\beta'_r) &= l(\beta_1) + \sum_{i=1}^{r-2} (l(\beta_{i+1}) - l(\beta_i)) + (l(\beta'_r) - l(\beta'_{r-1})) \\
    &= l(\beta_1) + \sum_{i=1}^{r-1} (l(\beta_{i+1}) - l(\beta_i))
\end{align*}

Notice that each $l(\beta_{i+1}) - l(\beta_i)$ is the difference of the lengths of the portions of $g^{-1} \cdot \beta$ and $\beta$ between $g^i \cdot \partial \Delta(v_0)$ and $g^{i+1} \cdot \partial \Delta(v_0)$. 

We illustrate the situation, in the case when $r=3$, in \Cref{fig:dualgraphflowgraph2}.

\begin{figure} 
    \centering
    \fontsize{12pt}{12pt}\selectfont
    \resizebox{!}{5cm}{
\begingroup%
  \makeatletter%
  \providecommand\color[2][]{%
    \errmessage{(Inkscape) Color is used for the text in Inkscape, but the package 'color.sty' is not loaded}%
    \renewcommand\color[2][]{}%
  }%
  \providecommand\transparent[1]{%
    \errmessage{(Inkscape) Transparency is used (non-zero) for the text in Inkscape, but the package 'transparent.sty' is not loaded}%
    \renewcommand\transparent[1]{}%
  }%
  \providecommand\rotatebox[2]{#2}%
  \newcommand*\fsize{\dimexpr\f@size pt\relax}%
  \newcommand*\lineheight[1]{\fontsize{\fsize}{#1\fsize}\selectfont}%
  \ifx\svgwidth\undefined%
    \setlength{\unitlength}{401.68119885bp}%
    \ifx\svgscale\undefined%
      \relax%
    \else%
      \setlength{\unitlength}{\unitlength * \real{\svgscale}}%
    \fi%
  \else%
    \setlength{\unitlength}{\svgwidth}%
  \fi%
  \global\let\svgwidth\undefined%
  \global\let\svgscale\undefined%
  \makeatother%
  \begin{picture}(1,0.50209258)%
    \lineheight{1}%
    \setlength\tabcolsep{0pt}%
    \put(0,0){\includegraphics[width=\unitlength,page=1]{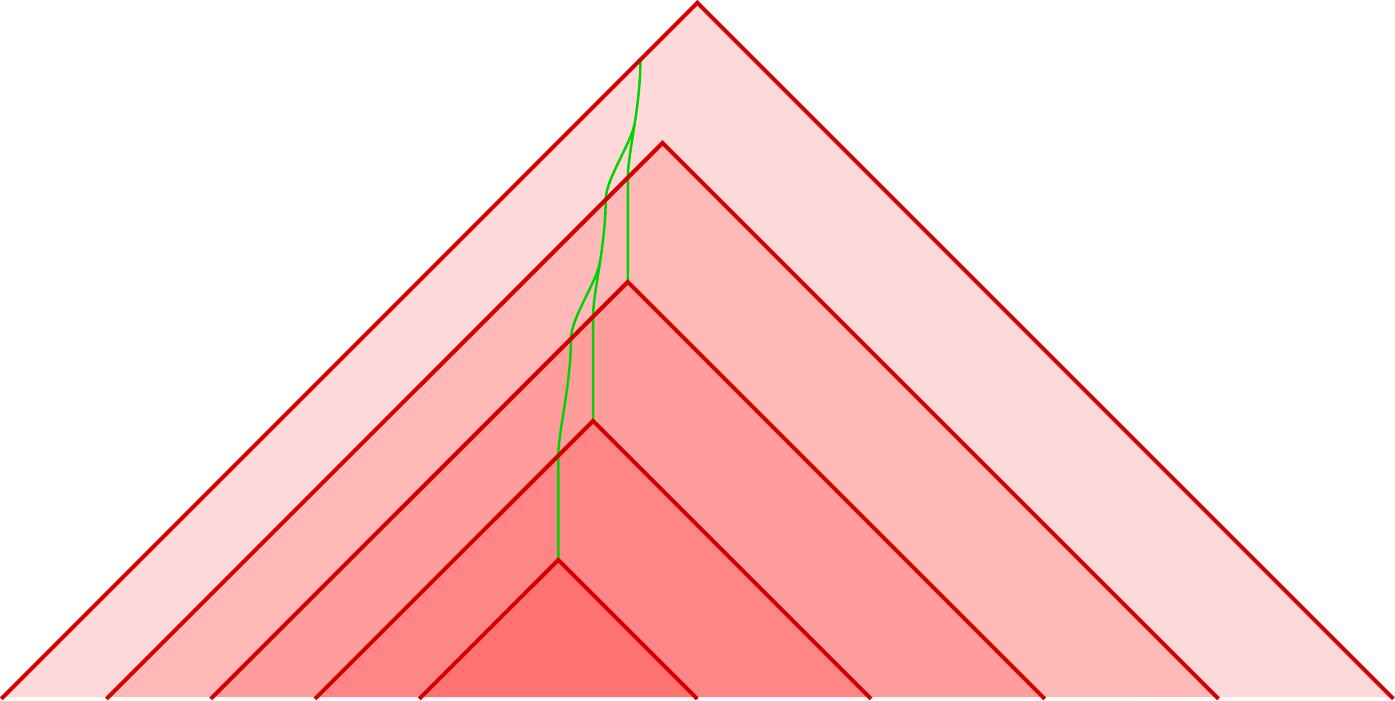}}%
    \put(0.42893707,0.22914219){\color[rgb]{0,0,0}\makebox(0,0)[lt]{\lineheight{1.25}\smash{\begin{tabular}[t]{l}$\beta_1$\end{tabular}}}}%
    \put(0.29584669,0.12011745){\color[rgb]{0,0,0}\makebox(0,0)[lt]{\lineheight{1.25}\smash{\begin{tabular}[t]{l}$g^{-1} \cdot \beta_1$\end{tabular}}}}%
    \put(0.43490096,0.32288698){\color[rgb]{0,0,0}\makebox(0,0)[lt]{\lineheight{1.25}\smash{\begin{tabular}[t]{l}$\beta_2$\end{tabular}}}}%
    \put(0.45993815,0.40939936){\color[rgb]{0,0,0}\makebox(0,0)[lt]{\lineheight{1.25}\smash{\begin{tabular}[t]{l}$\beta_3$\end{tabular}}}}%
    \put(0.3014289,0.19902587){\color[rgb]{0,0,0}\makebox(0,0)[lt]{\lineheight{1.25}\smash{\begin{tabular}[t]{l}$g^{-1} \cdot \beta_2$\end{tabular}}}}%
    \put(0.31733919,0.29023091){\color[rgb]{0,0,0}\makebox(0,0)[lt]{\lineheight{1.25}\smash{\begin{tabular}[t]{l}$g^{-1} \cdot \beta_3$\end{tabular}}}}%
  \end{picture}%
\endgroup%
}
    \caption{Computing the length of an element in $h(c)$ by comparing the infinite $\widetilde{\Phi}$-path $\beta$ starting at $v_L$ and the infinite $\widetilde{\Phi}$-path $g^{-1} \cdot \beta$ starting at $v_0$.} 
    \label{fig:dualgraphflowgraph2}
\end{figure}

Since $g$ acts in an orientation preserving way on $D$, $\beta \cap g^i \cdot \partial \Delta(v_0)$ and $g^{-1} \cdot \beta \cap g^i \cdot \partial \Delta(v_0)$ lie on the same side of $g^i \cdot \partial \Delta(v_0)$, with $\beta \cap g^i \cdot \partial \Delta(v_0)$ closer to $g^i \cdot v_0$ than $g^{-1} \cdot \beta \cap g^i \cdot \partial \Delta(v_0)$, for each $i$.

We make the following general claim.

\begin{lemma} \label{lemma:flowgraphcontraction}
If $\Delta(v) \subset \Delta(v')$ is a inclusion of descending sets, $x_1$ and $x_2$ are two points on the same side of $\partial \Delta(v)$ with $x_1$ closer to $v$ than $x_2$, such that the $\widetilde{\Phi}$ paths $\beta_1,\beta_2$ starting at $x_1,x_2$ end at $y_1, y_2$ respectively, which lie on the same side of $\partial \Delta(v')$ with $y_1$ closer to $v'$ than $y_2$, then $l(\beta_1) \leq l(\beta_2)$ and $l(\beta_2)-l(\beta_1)$ is less than or equal to the decrease in distance between $y_i$ compared to between $x_i$. 
\end{lemma}
\begin{proof}[Proof of \Cref{lemma:flowgraphcontraction}]
It suffices to prove the lemma when $x_i$ are one edge apart. In this case, $\beta_1$ and $\beta_2$ converge exactly when both of them enter the bottom side of a sector through vertices that are not the side vertex of the sector. Before this point the length of $\beta_2$ equals to the length of $\beta_1$ or the length of $\beta_1$ plus one. 
\end{proof}

Applying \Cref{lemma:flowgraphcontraction} to $\beta$ and $g^{-1} \cdot \beta$ between $g^i \cdot \partial \Delta(v_0)$ and $g^{i+1} \cdot \partial \Delta(v_0)$ for each $i$, we deduce that 
$$0 \leq \sum_{i=1}^{r-1} (l(\beta_{i+1}) - l(\beta_i)) \leq \text{Distance between $g^{-1} \cdot \beta$ and $\beta$ on $\partial \Delta(v_L)$} \leq L$$
Hence the length of the $\Phi$-cycle as described above is $\leq (L-1)^2 + L$ and $\geq \frac{L}{\delta^2-\delta+1}$. 

If $g$ acts in an orientation reversing way on $D$, then we apply the entire argument on $c^2$, which has length $2L$, to see that every element of $h(c^2)$ has length $\leq (2L-1)^2+2L$ and $\geq \frac{2L}{\delta^2-\delta+1}$. Hence by \Cref{prop:flowgraphencode}, every element of $h(c)$ has length $\leq \lfloor \frac{1}{2}((2L-1)^2+2L) \rfloor = (2L-1)L$ and $\geq \frac{L}{\delta^2-\delta+1}$.
\end{proof}

\begin{lemma} \label{lemma:complexityfacebound}
Let $\gamma$ be a closed orbit of $\phi$ which is not an element of $\mathcal{C}$. Suppose $\gamma$ has flow graph complexity $\mathfrak{c}$, then $\gamma$ intersects at most $2\delta^2 \mathfrak{c}$ faces of $\Delta$.
\end{lemma}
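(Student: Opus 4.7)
The plan is to relate the number $N$ of face crossings of $\gamma$ to the flow graph complexity via a detour through the dual graph. First, record the itinerary of $\gamma$ through the tetrahedra of $\Delta$. Since consecutive tetrahedra in the itinerary are separated by exactly one face, this itinerary yields a dual graph cycle $c_0 \in F_\Gamma^{-1}(\gamma)$ whose length is precisely the number of face crossings $N$. I then pass to $\gamma^2$ both to line up with the definition of $\mathfrak{c}$ (which is phrased in terms of $F_\Phi^{-1}(\gamma^2)$) and to ensure an orientation-preserving $\pi_1$-action later on; the itinerary of $\gamma^2$ is $c = c_0 \cdot c_0$, of length $2N$, with $F_\Gamma(c) = \gamma^2$. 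Since $\gamma \notin \mathcal{C}$ and $\mathcal{C}$ is closed under taking powers (\Cref{rmk:primitiveorbits}), $\gamma^2 \notin \mathcal{C}$, so $c$ is not a branch cycle and $h(c)$ is defined.

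Next, I would show that $h(c)$ is nonempty. Since $[c] = [\gamma]^2$ is a square in $\pi_1(M^\circ)$, it acts in an orientation-preserving way on the dynamic plane $D$ determined by $\widetilde{c}$. The convergence argument for $\beta$ and $g^{-1} \cdot \beta$ carried out in the proof of \Cref{prop:dualgraphflowgraph} (which handles both the generic and the AB-region cases) then produces a $[c]$-invariant bi-infinite $\widetilde{\Phi}$-path on $D$, and this path descends to a flow graph cycle $c' \in h(c)$.

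Finally, I combine the bounds. By \Cref{lemma:encodecommute}, $F_\Phi(c') = F_\Gamma(c) = \gamma^2$, so $c' \in F_\Phi^{-1}(\gamma^2)$, and by \Cref{prop:flowgraphencode} (parts (2) and (3)) every element of $F_\Phi^{-1}(\gamma^2)$ has the same length $2\mathfrak{c}$. Applying the lower bound of \Cref{prop:dualgraphflowgraph} to the dual cycle $c$ of length $2N$ and the flow cycle $c' \in h(c)$ yields $2\mathfrak{c} \geq \frac{2N}{\delta^2 - \delta + 1}$, hence $N \leq (\delta^2 - \delta + 1)\mathfrak{c} \leq 2\delta^2 \mathfrak{c}$.

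The main obstacle is the middle step: ensuring that $h(c)$ really contains a flow graph cycle representing $\gamma^2$, so that \Cref{prop:dualgraphflowgraph} and \Cref{prop:flowgraphencode} can be chained together. This is exactly why the passage from $\gamma$ to $\gamma^2$ is necessary, as it both guarantees the orientation-preserving action needed for the convergence argument and eliminates the potential mismatch between $\gamma$-cycles and the AB strip structure on $D$.
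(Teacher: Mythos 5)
Your proof follows essentially the same route as the paper: convert $\gamma$ into a dual graph cycle homotopic to it, pass to the square, and chain \Cref{lemma:encodecommute}, \Cref{prop:flowgraphencode}, and the lower bound of \Cref{prop:dualgraphflowgraph}. The one point you gloss over is that $\gamma$ may meet the $1$-skeleton of $\Delta$: an orbit transverse to the faces can still pass through an edge $e$, in which case it ``intersects'' every face containing $e$ while the tetrahedron itinerary is not well-defined there, so your claim that the dual cycle has length \emph{precisely} the number of faces met can fail. The paper handles this by perturbing $\gamma$ off each such edge to the side with more tetrahedra, producing a $\Gamma$-cycle of length at least \emph{half} the number of faces intersected; inserting that factor of $2$ into your final inequality still gives $N \leq 2(\delta^2-\delta+1)\mathfrak{c} \leq 2\delta^2\mathfrak{c}$, so your argument goes through once this case is addressed.
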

\begin{proof}
A closed orbit $\gamma$ that positively intersects $M$ faces of $\Delta$ can be perturbed to a path that positively intersect $\geq \frac{1}{2} M$ faces of $\Delta$ in their interiors, since the worst case scenario here is if $\gamma$ intersects an edge, in which case we can push $\gamma$ off to the side with more tetrahedra. Thus $\gamma$ be homotoped to a $\Gamma$-cycle $c$ of length $\geq \frac{1}{2} M$. By definition of $F_\Gamma$, $F_\Gamma(c)$ is homotopic to $c$, hence to $\gamma$, so by \Cref{lemma:nohomotopicorbits}, $F_\Gamma(c)=\gamma$. From this we deduce that $c$ is not a branch curve, since otherwise $\gamma \in \mathcal{C}$. Now by \Cref{prop:dualgraphflowgraph}, the length of any element in $h(c^2)$ is $\geq \frac{M}{\delta^2-\delta+1}$, so $2 \mathfrak{c} = \mathfrak{c}(\gamma^2) = l(h(c^2)) \geq \frac{M}{\delta^2-\delta+1} \geq \frac{M}{\delta^2}$
\end{proof}

Notice that the lemma can be restated as follows: Let $\gamma$ be a closed orbit of $\phi$ which is not an element of $\mathcal{C}$. Let $g=[\gamma]$ and let $p$ be the point in $\mathcal{P}$ which is invariant under $g$. Consider the set $F=\text{\{Projections of faces of $\widetilde{\Delta}$ on $\mathcal{P}$ which contain $p$\}}$, which $\langle g \rangle$ acts on. Suppose $\gamma$ has flow graph complexity $\mathfrak{c}$, then there are $\leq 2\delta^2 \mathfrak{c}$ many $\langle g \rangle$-orbits in $F$.

\begin{lemma} \label{lemma:complexityrectbound}
Let $\gamma$ be a closed orbit of $\phi$ which is not an element of $\mathcal{C}$. Let $g=[\gamma]$ and let $p$ be the point in $\mathcal{P}$ which is invariant under $g$. Consider the following sets
\begin{itemize}
    \item $Q=\text{\{Projections of equatorial squares of $\widetilde{\Delta}$ on $\mathcal{P}$ which contain $p$\}}$
    \item $E=\text{\{Edge rectangles in $\mathcal{P}$ which contain $p$\}}$
    \item $T=\text{\{Tetrahedron rectangles in $\mathcal{P}$ which contain $p$\}}$
\end{itemize}
$\langle g \rangle$ acts on each of these sets. Suppose $\gamma$ has flow graph complexity $\mathfrak{c}$, then there are $\leq 2 \delta^2 \mathfrak{c}$, $\leq 16 \delta^3 \mathfrak{c}$, $\leq 32 \delta^4 \mathfrak{c}$ many $\langle g \rangle$-orbits in $Q, E, T$ respectively.
\end{lemma}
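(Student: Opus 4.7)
My plan is to derive all three bounds from the face-rectangle bound $|F|\le 2\delta^{2}\mathfrak{c}$ established in \Cref{lemma:complexityfacebound}, by relating each of $Q$, $E$, $T$ to face rectangles containing $p$ via the local combinatorics of the veering triangulation. Throughout, $\tilde\gamma$ denotes the lift of $\gamma$ to $\widetilde{M^\circ}$ whose corresponding orbit in $\mathcal{P}\backslash\mathcal{S}$ passes through $p$.

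\textbf{Bound for $|Q|$.} The first step is the geometric observation that the projection to $\mathcal{P}$ of an equatorial square of a tetrahedron $\tilde t$ in $\widetilde\Delta$ equals (the interior of) the tetrahedron rectangle $R_{\tilde t}$: every orbit of $\widehat\phi$ passing through $\tilde t$ crosses the equatorial square exactly once, so the two sets of orbits coincide. Consequently $\langle g\rangle$-orbits in $Q$ correspond bijectively to $\langle g\rangle$-orbits of tetrahedra $\tilde t$ of $\widetilde\Delta$ with $p\in R_{\tilde t}$, equivalently with $\tilde\gamma$ passing through $\tilde t$. To each such $\tilde t$ I assign the unique bottom face of $\tilde t$ through which $\tilde\gamma$ enters $\tilde t$; the corresponding face rectangle contains $p$. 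This assignment is $\langle g\rangle$-equivariant, and injective on orbits because each face of $\widetilde\Delta$ is the bottom face of exactly one tetrahedron. This yields $|Q|\le|F|\le 2\delta^{2}\mathfrak{c}$.

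\textbf{Bounds for $|E|$ and $|T|$.} The core geometric claim is that the edge rectangle $R_{\tilde e}$ of an edge $\tilde e$ of $\widetilde\Delta$ is tiled, up to its boundary, by the tetrahedron rectangles of the tetrahedra incident to $\tilde e$: the tetrahedron above $\tilde e$, the tetrahedron below $\tilde e$, and the $\le\delta$ tetrahedra in each of the two stacks described by \Cref{prop:fantet}. Incident tetrahedra thus number at most $2+2\delta\le 4\delta$. Combined with the fact that each tetrahedron has exactly $6$ edges, this exhibits a multi-valued correspondence between $\langle g\rangle$-orbits in $E$ and $\langle g\rangle$-orbits in $T$: if $p\in R_{\tilde e}$ then $p\in R_{\tilde t}$ for some $\tilde t$ incident to $\tilde e$, and each such $\tilde t$ is incident to $6$ edges. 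The precise multiplicities yield $|T|\le 2\delta\cdot|E|$ (each edge rectangle is associated with at most $\le 2\delta$ tetrahedron rectangles containing $p$ via its stack neighbors) and $|E|\le 8\delta\cdot|F|$ (each face rectangle containing $p$ is associated with at most $8\delta$ edge rectangles containing $p$, which comes from the $3$ bordering edges of the face times a factor accounting for the incidence with top/bottom/side roles in the stack). Substituting the bound on $|F|$ yields $|E|\le 16\delta^{3}\mathfrak{c}$ and $|T|\le 32\delta^{4}\mathfrak{c}$.

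\textbf{Main obstacle.} The hard part will be verifying the tiling claim that if $p\in R_{\tilde e}$ then $p$ lies in the tetrahedron rectangle of some tetrahedron incident to $\tilde e$, together with the exact multiplicity constants of $8\delta$ and $2\delta$ in the two reduction steps. Establishing the tiling requires carefully describing the locations of the $\mathcal{S}$ points along the sides of $R_{\tilde e}$ coming from the tetrahedra in the two stacks adjacent to $\tilde e$, and observing that the horizontal and vertical stable/unstable leaves through these $\mathcal{S}$ points subdivide $R_{\tilde e}$ precisely into the tetrahedron rectangles of the incident tetrahedra. Once this tiling and the associated counting are pinned down, the bounds follow as above.
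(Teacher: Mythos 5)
Your bound on $|Q|$ is essentially sound and close to the paper's: the projection of an equatorial square of $\tilde t$ coincides with the projection of $\tilde t$ (the paper phrases this as the union of the projections of the two top faces), so squares containing $p$ correspond to tetrahedra traversed by $\tilde\gamma$, and assigning the bottom face through which $\tilde\gamma$ enters is an injective, equivariant map into the set controlled by \Cref{lemma:complexityfacebound}. One caution even here: that lemma counts \emph{projections of faces} containing $p$ (faces actually crossed by $\tilde\gamma$), not face rectangles containing $p$; your assignment lands in the former set, so it works, but your phrasing conflates the two, and this distinction becomes fatal in the next step.

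The bounds for $|E|$ and $|T|$ have a genuine gap: your correspondences run in the wrong direction, and the covering claims you would need are not the ones you state. To get $|T|\le(\text{const})\cdot|E|$ you must show that every tetrahedron rectangle containing $p$ contains $p$ inside the edge rectangle of one of its own edges — i.e.\ that $R_{\tilde t}$ is \emph{covered by} the edge rectangles of the edges of $\tilde t$ (the paper shows the four side edges together with the top edge suffice), and then count how many tetrahedra a fixed edge can serve in this role ($\le 2\delta+1$). Your ``tiling claim'' asserts the reverse inclusion, that $R_{\tilde e}$ is tiled by the tetrahedron rectangles of tetrahedra incident to $\tilde e$; this is false as stated (the tetrahedron above $\tilde e$ has $R_{\tilde e}\subsetneq R_{\tilde t}$ with $R_{\tilde t}$ strictly taller, so these rectangles are not pieces of $R_{\tilde e}$), and even the true statement $R_{\tilde e}\subseteq\bigcup R_{\tilde t}$ only yields $|E|\le 6|T|$, which is useless. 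Likewise, $|E|\le(\text{const})\cdot|Q|$ requires covering each edge rectangle by \emph{projections of squares}; this is the genuinely delicate step, and the paper's proof splits into cases — when the stack on a side of $\tilde e$ has length $\ge 2$ the squares of that stack cover the corresponding slope of $R_{\tilde e}$, but when it has length $1$ one must borrow squares from the stack of an adjacent edge of the opposite color — a mechanism entirely absent from your sketch. Finally, your multiplicities are internally inconsistent: you count $2+2\delta\le 4\delta$ tetrahedra incident to an edge but then claim a multiplicity of $2\delta$, and the factor $8\delta$ for the $E$-step is asserted without any specified assignment. As written, the argument for the second and third bounds does not go through.
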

\begin{proof}
A projection of a square $q$ is the union of the projections of the two top faces of the tetrahedron $q$ lies in. See \Cref{fig:complexityrectbound} left. Conversely, any projection of a face appears exactly once in such a union. So the first statement follows from \Cref{lemma:complexityfacebound}

For the second statement, we will first show that an edge rectangle $R$, say, associated to a blue edge $e$, can be covered by projections of certain squares. If a side of $e$ has 2 or more tetrahedra, then the slope of $R$ to that side is covered by projections of squares in the tetrahedra to that side of $e$. See \Cref{fig:complexityrectbound} middle, top right of the center edge. If a side of $e$ only has 1 tetrahedron $t$, then consider one of the red side edges $e'$ of $t$. The slope of $R$ to this side of $e$ is covered by projections of squares in the tetrahedra to the same side of $e'$ as $t$. See \Cref{fig:complexityrectbound} middle, bottom left of the center edge. 

Conversely, any square lies on 4 sides-of-edges and the union of squares in the tetrahedra to a side of an edge can appear in the collection of squares as above for at most $2 \delta - 3$ edges. So the second statement follows from the first statement.

Finally, a tetrahedron rectangle corresponding to a tetrahedron $t$ is covered by the union of edge rectangles corresponding to the 4 side edges and the top edge of $t$. See \Cref{fig:complexityrectbound} right. Conversely, any edge rectangle can appear in such a collection of edge rectangles for at most $2 \delta + 1$ tetrahedron rectangles. So the third statement follows from the second statement. 
\end{proof}

\begin{figure} 
    \centering
    \resizebox{!}{4.5cm}{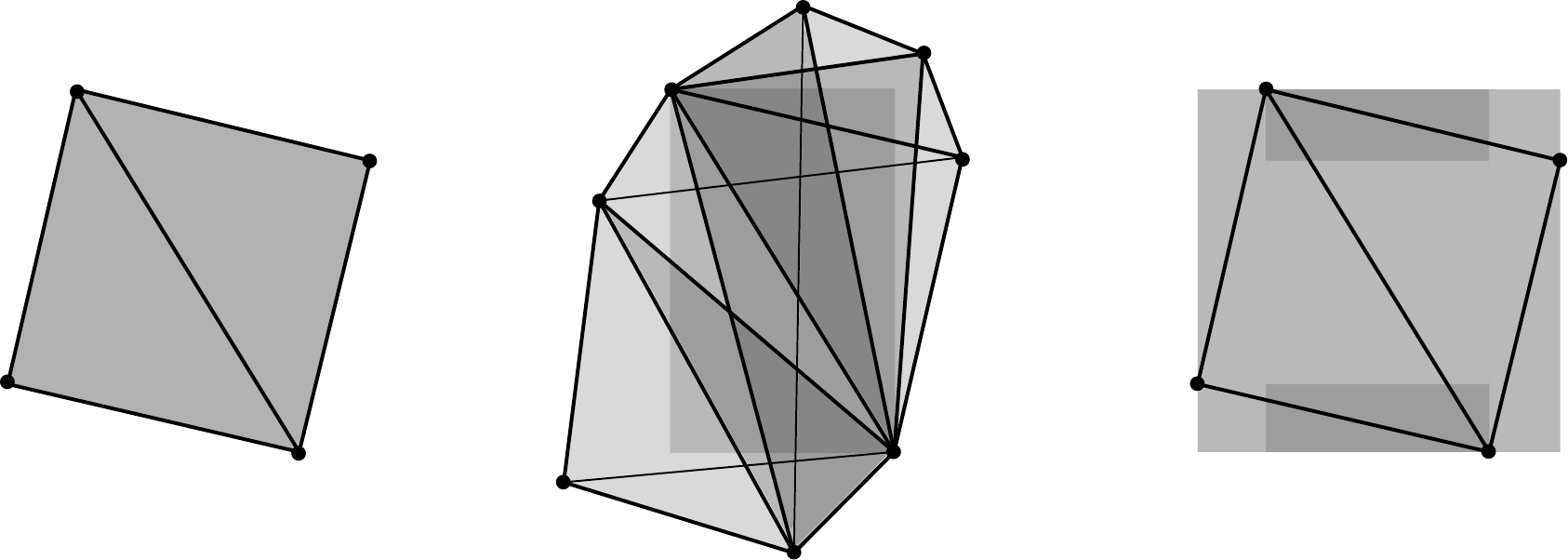}
    \caption{We consider how projections of faces/projections of equatorial squares/edge rectangles can cover projections of equatorial squares/edge rectangles/tetrahedron rectangles respectively to deduce \Cref{lemma:complexityrectbound} from \Cref{lemma:complexityfacebound}.} 
    \label{fig:complexityrectbound}
\end{figure}

\section{Broken transverse helicoids} \label{sec:helicoid}

In this section, we will construct the first type of broken transverse surfaces which we will use to assemble our Birkhoff sections. To briefly outline the construction: we start with an admissible collection of edges in the veering triangulation, then construct an edge sequence containing the given collection, lift this to a winding edge path, which bounds a helicoidal transverse surface then finally quotient this down to the desired broken transverse surface. For convenience we will just describe this for red edges/edge sequences/edge paths; the symmetric construction holds for blue edges/edge sequences/edge paths.

\subsection{Edge sequences} \label{subsec:edgeseqdefn}

\begin{defn} \label{defn:crossladderpole}
Let $\Delta$ be a veering triangulation. Let $T$ be a vertex of $\Delta$. Recall the form of the boundary triangulation $\partial \Delta$ at $T$, as explained in \Cref{subsec:vt}. Suppose $e_1$ and $e_2$ are two vertices of $\partial \Delta$ of the same color, then they each determine a ladderpole curve. We say that $e_1$ is \textit{$2k$ ladderpoles to the left} of $e_2$ at $T$ if the ladderpole curve determined by $e_1$ is $2k$ ladderpole curves to the left of that determined by $e_2$, for $k \geq 0$. When $k=0$ we also say that $e_1$ and $e_2$ \textit{lie in the same ladderpole} at $T$.
\end{defn}

\begin{defn} \label{defn:edgeseq}
A \textit{red edge sequence} is a sequence of red oriented edges $(e_i)_{i \in \mathbb{Z}/P}$ of $\Delta$ satisfying:
\begin{itemize}
    \item There is a collection of vertices $(T_i)_{i \in \mathbb{Z}/P}$ such that each $e_i$ goes from $T_{i-1}$ to $T_i$
    \item For each $i$, we have one of the following two cases
    \begin{enumerate}
        \item $e_i$ and $e_{i+1}$ lie in the same ladderpole at $T_i$
        \item $e_{i+1}$ lies $2$ ladderpoles to the left of $e_i$ at $T_i$
    \end{enumerate}
    and case (1) occurs for at least one value of $i$
\end{itemize}
We call $P$ the \textit{period} of the red edge sequence.

In case (2) above, suppose $l$ is the blue ladderpole curve to the left of the red ladderpole curve determined by $e_i$, then we say that the edge sequence crosses $l$ at $T_i$. If there are $n$ indices $i$ such that the edge sequence crosses $l$ at $T_i$, then we say that the edge sequence \textit{crosses $l$ $n$ times}.

A \textit{blue edge sequence} is defined similarly. When the color red/blue is clear from context, we will abbreviate these as \textit{edge sequences}.

A broken transverse surface $S$ is said to \textit{have boundary along closed orbits $\{\gamma_j\}$ and a red edge sequence $(e_i)$} if the sides in one component of $\partial S$ that are in $\partial_h S$ are $(e_i)$ in that order, while the other components of $\partial S$ lie along $\bigcup \gamma_j$.
\end{defn}

The reader will notice that the definition of an edge sequence bears a strong resemblance to the definition of the an edge path. Indeed, in \Cref{subsec:edgeseqtoedgepath}, we will show how to lift an edge sequence to an edge path. 

To the readers that find our terminology of edge sequences and edge paths confusing: One should think of an edge path as the path of edges of $\widetilde{\Delta}$ corresponding to the edge rectangles, which we will later show to be part of the boundary of a helicoidal transverse surface in $\widetilde{M^\circ}$. Meanwhile, an edge sequence should only be thought of as a sequence of oriented edges, and not as a path, since it is not specified how to travel from $e_i$ to $e_{i+1}$ within the boundary torus of a neighborhood of $T_i$. Indeed, the freedom of choosing how to do this will play a big role in the proof of \Cref{thm:closedBirkhoffsection}. 

\subsection{Constructing edge sequences} \label{subsec:edgeseqconstr}

\begin{defn} \label{defn:admissible}
A collection of red oriented edges $\{d_1,...,d_N\}$ of $\Delta$ is said to be \textit{admissible} if for each vertex of $\Delta$, the number of incoming edges equals to the number of outgoing edges. 
\end{defn}

\begin{lemma} \label{lemma:admissibletoedgeseq}
Let $\{d_1,...,d_M\}$ be an admissible collection of red oriented edges. Then there exists a collection of red oriented edges $\{d'_1,...,d'_k\}$ and a red edge sequence $(e_1,...,e_{M+2k})$ such that $(e_1,...,e_{M+2k})$ is a permutation of $(d_1,...,d_M,d'_1,...,d'_k,-d'_1,...,-d'_k)$. Moreover, if we are given a positive integer $\alpha_T$ for every vertex $T$, then we can arrange it so that the minimum number of times the edge sequence crosses a blue ladderpole curve on $T$ is $\alpha_T$, and in that case $k$ can be arranged to be at most $(\frac{\nu}{2} M + \max_T \alpha_T + 1) N + M$.
\end{lemma}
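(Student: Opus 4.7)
The plan is a three-phase construction that greedily assembles the edge sequence while tracking the number of auxiliary edges added.

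Phase 1 (Decompose into closed walks). The admissibility condition---balanced in- and out-degree at every vertex of $\Delta$---guarantees that the multiset $\{d_1,\ldots,d_M\}$ partitions into closed walks $W_1,\ldots,W_c$ on the vertices of $\Delta$, i.e.\ cyclic sequences in which consecutive edges share a common vertex. This is a standard Eulerian-style decomposition, obtained greedily by following unused outgoing edges and closing up when we return to a starting vertex, then iterating on the remainder.

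Phase 2 (Bridge the ladderpole condition locally). For each consecutive pair $(e,e')$ of edges in some $W_j$ meeting at a vertex $T$, I would inspect where their associated vertices sit in the cyclic order of red ladderpoles of $\partial\Delta(T)$. If $e'$ does not already lie in the same red ladderpole as $e$ or two ladderpoles to the left, I insert between them at $T$ a chain of at most $\nu_T-2$ auxiliary edge pairs. Each inserted pair $(d'_j,-d'_j)$ is a red edge incident to $T$, traversed forward to its other endpoint and immediately back; I choose $d'_j$ so that its vertex in $\partial\Delta(T)$ lies in the unique red ladderpole two to the left of the previous one. The ``there-and-back'' traversal satisfies the ladderpole condition at the far endpoint trivially, since both visits correspond to the same vertex of $\partial\Delta$ there.

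Phase 3 (Merge the walks and pad to achieve the required crossings). After Phase~2, I have $c$ disjoint closed edge sequences. To fuse them into one, I repeatedly splice two walks sharing a common vertex of $\Delta$ (bridging the splice as in Phase~2), and when no common vertex exists, I connect them using a path of edges in $\Delta$ inserted as a chain of there-and-back auxiliary pairs; any such path has length at most $N$ since $\Delta$ has $N$ edges and is connected. Finally, to enforce the blue-ladderpole crossing lower bound $\alpha_T$ at every vertex $T$, I insert ``waste cycles'' at $T$: a waste cycle walks once around the red ladderpoles of $T$ by repeated two-left jumps, returns to its starting red vertex, and crosses every blue ladderpole at $T$ exactly once while consuming a controlled number of auxiliary edges at $T$. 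Repeating waste cycles brings each crossing count up to $\alpha_T$.

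The main obstacle will be the quantitative bookkeeping in Phase~3: to match the bound $\left(\frac{\nu}{2}M + \max_T \alpha_T + 1\right)N + M$, I will need to organize the waste cycles into a small number of global tours rather than treating vertices one by one. Concretely, the $\alpha_{\max} N$ contribution suggests batching all the crossings across vertices into at most $\alpha_{\max}$ tours, each of length at most $N$, rather than the naive count $\alpha_{\max}\sum_T \nu_T$. The remaining verifications---that every consecutive pair in the assembled sequence satisfies the ladderpole condition, and that case~(1) of \Cref{defn:edgeseq} occurs for at least one $i$---are then a direct check, since by construction most bridging steps use case~(2) while each $W_j$ contributes consecutive pairs in the same ladderpole at least once (or one can be forced by a small perturbation of the bridging choices).
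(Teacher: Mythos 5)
There is a genuine gap, and it lies in the interaction between your Phase 1 and the exactness of the crossing counts. The lemma does not merely ask that each blue ladderpole at $T$ be crossed \emph{at least} $\alpha_T$ times; it asks that the \emph{minimum} over the blue ladderpole curves at $T$ be \emph{exactly} $\alpha_T$ (and \Cref{rmk:admissibletoedgeseq} and the homology bookkeeping in the proof of \Cref{thm:closedBirkhoffsection} genuinely use this exact value at a specific curve). By committing in Phase 1 to an Eulerian decomposition, you fix which incoming edge is followed by which outgoing edge at every vertex, and then the bridging in Phase 2 is forced: a transition from red ladderpole $A$ to red ladderpole $B$ must cross every blue ladderpole in the leftward arc from $A$ to $B$ exactly once, with no freedom. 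These forced crossings can already push \emph{every} blue ladderpole at $T$ above $\alpha_T$ (e.g.\ with two red ladderpoles at $T$ and a decomposition that pairs many $l_1$-entries with $l_3$-exits and vice versa, both blue ladderpoles get crossed on the order of $M$ times). Since your waste cycles only ever increase counts, the minimum can then never be brought back down to $\alpha_T$. A secondary problem of the same origin: your waste cycles increment all counts at $T$ uniformly, so even when the post-Phase-2 minimum is small you get exact control only of the minimum, not of which curve attains it, whereas \Cref{rmk:admissibletoedgeseq} needs the minimizing curve to depend only on the admissible collection.

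The paper avoids this by never fixing the vertex matchings in advance. It first adds $\frac{\nu}{2}M+\max_T\alpha_T+1$ copies of \emph{all} red edges together with their reverses, which guarantees that every red ladderpole at every vertex has a large number of both entries and exits, with per-ladderpole imbalances still bounded by $M$. It then \emph{solves} for the matching at each vertex: after a cyclic shift making the minimum partial sum of the imbalances zero, it prescribes that exactly $u_j=\alpha_T+\sum_{k\le j}(s_k-t_k)$ transitions cross the $j$-th blue ladderpole and all remaining transitions stay within a single red ladderpole (case (1) of \Cref{defn:edgeseq}). This gives the crossing counts exactly, with minimum $\alpha_T$ attained at a curve determined by the admissible collection alone, and the leftover relabelling freedom is what merges the circular sequences into one. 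Your there-and-back auxiliary pairs and connectivity-based merging are in the same spirit as the paper's (and share with it the implicit reliance on connectivity through red edges), but without the global re-matching step the construction cannot meet the exact-minimum requirement; as a further, smaller point, your budget for the waste cycles is off (a tour crossing every blue ladderpole at every vertex costs on the order of $\sum_T\nu_T\le 2N$ transitions, and the claimed length-$N$ tours are not justified), though this could likely be absorbed by the slack in the stated bound.
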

\begin{proof}
We choose $\{d'_1,...,d'_k\}$ to be $\frac{\nu}{2} M + \max_T \alpha_T + 1$ copies of the set of all red edges (with some choice of orientation). Let $\mathcal{E}=(d_1,...,d_M,d'_1,...,d'_k,-d'_1,...,-d'_k)$. We first show that $\mathcal{E}$ can be arranged into circular sequences $\{(e^m_i)_i \}_m$ such that:
\begin{itemize}
    \item There is a collection of vertices $T^m_i$ such that each $e^m_i$ goes from $T^m_{i-1}$ to $T^m_i$
    \item For each $i$, we have one of the following two cases
    \begin{enumerate}
        \item $e^m_i$ and $e^m_{i+1}$ lie in the same ladderpole at $T^m_i$
        \item $e^m_{i+1}$ lies $2$ ladderpoles to the left of $e^m_i$ at $T^m_i$
    \end{enumerate}
\end{itemize}
In other words, each $(e^m_i)_i$ is almost a red edge sequence, except the property that case (1) occurs for at least one value of $i$ may not be satisfied.

Consider each vertex $T$ of $\Delta$. Suppose the red ladderpoles on $T$ are $l_1,...,l_n$ in circular order. Suppose the elements of $\mathcal{E}$ that enter $T$ through a vertex on $l_j$ are $a_{j,1},...,a_{j,s_j}$ and the elements of $\mathcal{E}$ that exit $T$ through a vertex on $l_j$ are $b_{j,1},...,b_{j,t_j}$. By the choice of $\mathcal{E}$ and the definition of admissibility, we have the following properties of $s_j$ and $t_j$:
\begin{itemize}
    \item For every $j$, $s_j$ and $t_j$ are greater or equal to $\frac{n}{2} M + \max_T \alpha_T + 1$
    \item For every $j$, $|s_j-t_j| \leq M$
    \item $\sum_{j=1}^n s_j = \sum_{j=1}^n t_j$
\end{itemize}

We claim that there exists positive integers $u_1,...,u_n$, each less than or equal to $\frac{n}{2} M + \max_T \alpha_T$, such that $s_j-t_j = u_j - u_{j-1}$, where the indices are taken mod $n$. To see this, consider the integers $\sum_{k=1}^j (s_k-t_k)$ as $j$ ranges from $1$ to $n$. By cycling permuting $l_1,...,l_n$, we may assume that the minimum of these integers is attained at $j=n$, hence the minimum is $0$. The difference between two adjacent such integers is $s_k-t_k$, which has absolute value at most $M$, so the maximum over all these integers is bounded above by $\frac{n}{2} M$. Hence we can simply set $u_j = \alpha_T + \sum_{k=1}^j (s_k-t_k)$.

Now we arrange $\mathcal{E}$ by requiring, for each vertex $T$ of $\Delta$ as above, $a_{j,i}$ be followed by $b_{j,i}$ for $i=1,...,s_j-u_{j-1}$, and $a_{j,i}$ be followed by $b_{j-1,i-s_j+t_{j-1}}$ for $i=s_j-u_{j-1}+1,...,s_j$. Intuitively, we make the edge sequences stay within $l_j$ for the first $s_j-u_{j-1}=t_j-u_j$ indices and make them cross the blue ladderpole curve between $l_j$ and $l_{j+1}$ for the last $u_j$ indices. We draw a schematic picture of this in \Cref{fig:admissibletoedgeseq}. This divides $\mathcal{E}$ into sequences as claimed.

\begin{figure} 
    \centering
    \resizebox{!}{3.5cm}{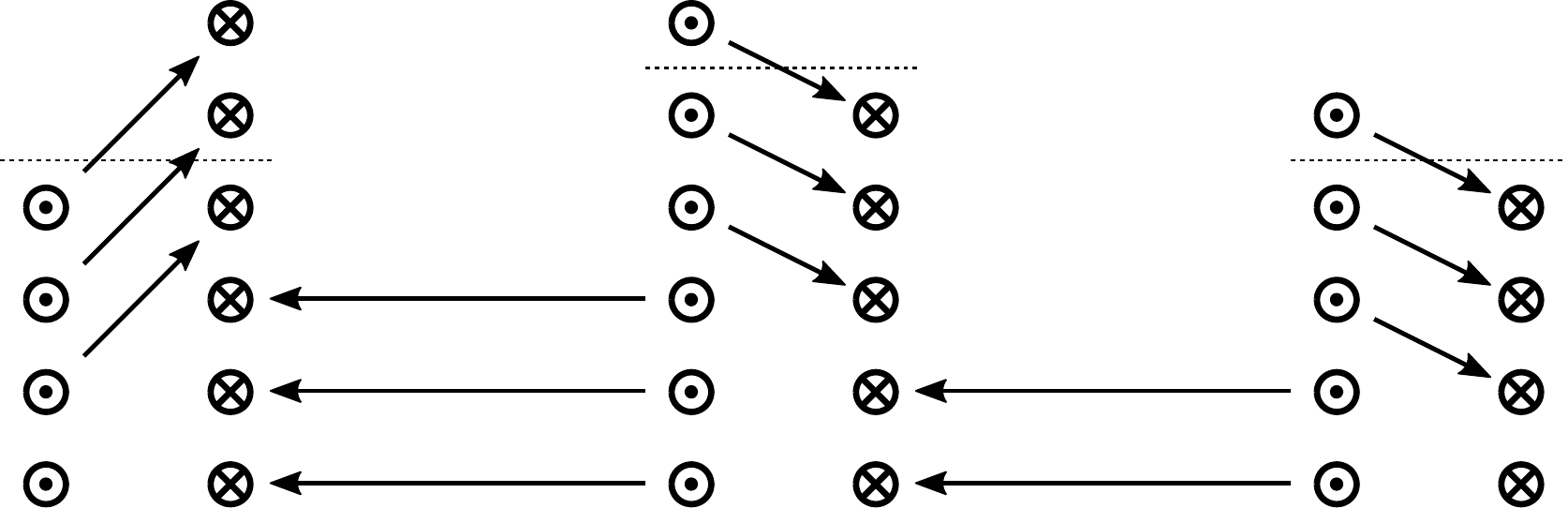}
    \caption{A schematic picture of how we divide $\mathcal{E}$ into sequences. We group the vertices according to which ladderpole curves they lie on. The vertices above the dotted lines are in the given admissible collection while the vertices below the dotted lines are copies of all the red edges. The placement of the arrows is rather arbitrary, and by moving them around we can arrange it so that we divide $\mathcal{E}$ into only one sequence.} 
    \label{fig:admissibletoedgeseq}
\end{figure}

Note that there is a large amount of freedom in the above construction, coming from the fact that there is no constraint on how to label the $a_{j,1},...,a_{j,s_j}$ and $b_{j,1},...,b_{j,t_j}$. Indeed, our next task is to argue that the labelling can be made so that we only end up with a single sequence. 

For each vertex $T$ as above, if $a_{j,i_1}$ and $a_{j,i_2}$ lie in different sequences for some $i_1,i_2$, we exchange their labels. This effectively performs a cut and paste operation on the two sequences and reduces the number of sequences by one. Thus we can assume that $a_{j,i}$ all lie in the same sequence for fixed $j$. Similarly, we can assume that $b_{j,i}$ all lie in the same sequence for fixed $j$. Since $s_j-u_{j-1} \geq (\frac{n}{2}M + \max_T \alpha_T + 1)-(\frac{n}{2} M + \max_T \alpha_T) \geq 1$, the common sequences of $a_{j,i}$ and $b_{j,i}$ agree. Similarly, since $u_j \geq 1$, the common sequences of $a_{j,i}$ and $b_{j+1,i}$ agree. Hence we can assume that all the elements of $\mathcal{E}$ that have a vertex at a fixed $T$ lie in the same sequence. Once this is achieved, we see that there is only one sequence, since $\overline{\Delta}$ is connected thus has connected 1-skeleton. Since each $u_j \geq 1$, case (1) in \Cref{defn:edgeseq} must occur at some index for this single sequence, and thus we have the desired red edge sequence.
\end{proof}

\begin{rmk} \label{rmk:admissibletoedgeseq}
Notice that in \Cref{lemma:admissibletoedgeseq}, for a fixed admissible collection, the minimum number of times the constructed edge sequence crosses a blue ladderpole curve $l$ on a fixed $T$ is attained at the same $l$ for any choice of $\alpha_T$. Indeed, the way $l$ is chosen in the proof only depends on the integers $s_k-t_k$, which in turn only depends on the given admissible collection.
\end{rmk}

\subsection{From edge sequences to edge paths} \label{subsec:edgeseqtoedgepath}

In this section, we show that edge sequences can be lifted to nice edge paths. For the purpose of proving the bounds in \Cref{thm:cuspedBirkhoffsection} and \Cref{thm:closedBirkhoffsection}, we need to pay attention to how these edge paths behave at the vertices. We make the following definition.

\begin{defn}
Suppose we fix a system of ladderpole transversals $\{t_T\}$. Consider the complete set of lifts of $\{t_T\}$ in $\widetilde{\Delta}$, thus at each vertex $\widetilde{T}$ of $\widetilde{\Delta}$ we get a $\mathbb{Z}$-collection of curves $(t_{\widetilde{T},j})_{j \in \mathbb{Z}}$ which we index, in order, from bottom to top.

Now let $(R_i)$ be a edge path. Let the vertices determined by the edges of $\widetilde{\Delta}$ corresponding to $R_i$ and $R_{i+1}$ at $s_i$ be $v_i$ and $v_{i+1}$ respectively. Let $\alpha$ be a curve from $v_i$ to $v_{i+1}$. We say that $R_{i+1}$ \textit{lies $k$ ladderpoles above} $R_i$ at $s_i$ if the signed intersection number between $\alpha$ and the $t_{s_i,j}$ is $k$. Here we take the convention that if the starting or ending point of $\alpha$ approaches some $t_{s_i,j}$ from above, we do not count that point in the intersection number, but if the point approaches some $t_{s_i,j}$ from below, we do count it in the intersection number. This ensures that the intersection number has the expected additivity properties.
\end{defn}

\begin{lemma} \label{lemma:edgeseqtoedgepath}
Every red edge sequence $(e_i)$ of period $P$ lifts to a nice red $g$-edge path $(R_i)$ of period $P$. Moreover, given integers $\beta_i \geq 4$, it can be arranged so that each $R_{i+1}$ lies $\beta_i$ ladderpoles above $R_i$ at $s_i$, and in that case it can be arranged so that $g$ quotients an orbit of $\widehat{\phi}$ to a closed orbit $\gamma$ of $\phi$ of flow graph complexity at most $2((\max_i \beta_i + 4)\lambda + 3)^2 \delta^2 P^2$.
\end{lemma}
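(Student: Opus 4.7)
The plan is to realize the edge sequence as a loop in $M^\circ$, lift it to the universal cover to extract the edge path, and then bound the flow graph complexity using the resulting dual graph length via \Cref{prop:dualgraphflowgraph}.

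Fix a system of ladderpole transversals $\{t_T\}$. For each $i \in \mathbb{Z}/P$, I choose an edge path $\alpha_i$ in the boundary triangulation $\partial\Delta$ at $T_i$ connecting the vertex $e_i$ to the vertex $e_{i+1}$: in case (1) of \Cref{defn:edgeseq} the arc stays within the red ladderpole containing $e_i$; in case (2) it additionally crosses one blue ladderpole curve to reach the red ladderpole two to the left. Using $\beta_i \geq 4$, I arrange $\alpha_i$ to cross exactly $\beta_i$ ladderpole transversals of $t_{T_i}$ by winding around the ladder direction as needed. The concatenation $e_1 * \alpha_1 * \cdots * e_P * \alpha_P$ defines a based loop representing some $g \in \pi_1(M^\circ)$. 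Lifting this loop to $\widetilde{M^\circ}$ produces a bi-infinite sequence of lifted edges $\widetilde{e_i}$ with $g \cdot \widetilde{e_i} = \widetilde{e_{i+P}}$, and I let $R_i \subset \mathcal{P}$ be the red edge rectangle associated to $\widetilde{e_i}$, with $s_i$ the corner of $R_i$ corresponding to $\widetilde{T_i}$.

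By the ladder structure of $\partial \Delta$ and the veering combinatorics, in case (1) the rectangles $R_i$ and $R_{i+1}$ determine the same quadrant at $s_i$ with $R_{i+1}$ strictly taller than $R_i$; in case (2) the quadrant of $R_{i+1}$ is two quadrants to the left of that of $R_i$. Since case (1) occurs at least once per period, $g$ is not conjugate to a cusp holonomy, and the spiralling of the $R_i$'s forces their nested intersection to collapse to a unique $g$-fixed point $p \in \mathcal{P} \setminus \mathcal{S}$, projecting to a closed orbit $\gamma \in M \setminus \bigcup \mathcal{C}$ (using \Cref{lemma:nohomotopicorbits} to rule out $\gamma \in \mathcal{C}$). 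The staircase condition for consecutive case-(2) subsequences follows from the planar position of $p$ within the nested spiral. The niceness condition is where $\beta_i \geq 4$ is used: when case (1) holds at both $s_{i-1}$ and $s_i$, the $\beta_i$-transversal height jump from $R_i$ to $R_{i+1}$ forces $p$ into a narrow vertical strip of $R_i$ along its $s_i$-side, and for $\beta_i \geq 4$ this strip has width less than the displacement of the core point $c(R_i)$ from the $s_i$-side, which by the inductive description in \Cref{defn:corepoint} is comparable to about two transversals.

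For the complexity bound I perturb the loop $e_1 * \alpha_1 * \cdots * e_P * \alpha_P$ slightly to be transverse to the $2$-skeleton and count its dual graph length $L$. Pushing each edge $e_i$ off the $1$-skeleton contributes at most $\delta$ face crossings, since the stack of tetrahedra about $e_i$ has length at most $\delta$; pushing the arc $\alpha_i$ off the cusp boundary contributes one face crossing per $\partial\Delta$-edge, and its combinatorial length in $\partial\Delta$ is at most $(\max_i \beta_i + 4)\lambda + 3$ after accounting for $\beta_i$ transversal crossings (each traversing at most $\lambda$ edges along a ladderpole) plus a bounded correction absorbed into the $3$ for the case-(2) ladderpole jump. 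Summing over the $P$ indices yields a dual graph cycle $c_\Gamma$ representing $\gamma$ of length $L \leq P\delta((\max_i \beta_i + 4)\lambda + 3)$. Applying \Cref{prop:dualgraphflowgraph}, either $h(c_\Gamma)$ is nonempty and contains a flow graph cycle of length $\leq (2L-1)L$ representing $\gamma$, or $h(c_\Gamma^2)$ contains a flow graph cycle of length $\leq (2L-1)^2 + 2L$ representing $\gamma^2$; in both situations \Cref{defn:orbitcomplexity} delivers $\mathfrak{c}(\gamma) \leq 2L^2 \leq 2P^2\delta^2((\max_i \beta_i + 4)\lambda + 3)^2$. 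The main subtleties are the niceness verification via the explicit core-point location and the careful translation from $\partial\Delta$-combinatorial length of $\alpha_i$ into a dual graph face-crossing count, particularly the constant $3$ correction for the case-(2) ladderpole jump.
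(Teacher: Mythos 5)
Your overall strategy---realize the edge sequence as a loop by inserting boundary-triangulation arcs, lift it, read off the edge rectangles, and bound the flow graph complexity by estimating the dual graph length of the loop and invoking \Cref{prop:dualgraphflowgraph}---matches the paper's, and your dual-graph count and final bound come out the same. The genuine gap is in the verification of niceness (and, relatedly, of the staircase condition in \Cref{defn:edgepath}). You assert that because $R_{i+1}$ sits $\beta_i \geq 4$ transversals up the ladder, $p$ is confined to a ``narrow vertical strip'' whose width is less than the displacement of $c(R_i)$ from the $s_i$-side, the latter being ``comparable to about two transversals.'' There is no metric on $\mathcal{P}$ in which this comparison makes sense: the core point $c(R_i)$ is defined by the nested sequence of tetrahedron rectangles above and below $R_i$ (\Cref{defn:corepoint}), and its position relative to the vertical side of $R_i$ through $s_i$ is governed by where the top edge of the tetrahedron above $\widetilde{e_i}$ lands---combinatorial data with no a priori relation to a count of ladderpole-transversal crossings.

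The paper closes exactly this gap by routing the lifted path through an auxiliary edge $\widetilde{e'_{i+1}}$, obtained by a two-step move: take the blue side edge $\widetilde{f}$ at $s_i$ of the tetrahedron above $\widetilde{e_i}$, then the red side edge at $s_i$ of the tetrahedron above $\widetilde{f}$. The edge rectangle $R'_{i+1}$ of $\widetilde{e'_{i+1}}$ is then provably on the $s_i$-side of $c(R_i)$ and is wider than the subsequent staircase, which is how $p$ inherits the required position. The role of $\beta_i \geq 4$ is only to guarantee that the number of extra ladderpole loops $\beta_i - x$ is nonnegative (the correction $x$ lies in $[-3,4]$), so that $R_{i+1}$ lands at or above $R'_{i+1}$ on the ladder; it is the construction of $\widetilde{e'_{i+1}}$, not the size of $\beta_i$, that locates $p$ relative to the core point. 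These same auxiliary edges (together with the $\widetilde{f_i}$, whose rectangles are shown to be successively taller and narrower) are what the paper uses to actually build the staircases $S_{j,k}$---which you take for granted when you say the staircase condition ``follows from the planar position of $p$ within the nested spiral,'' even though the staircases must exist before $p$ can be defined as their intersection.
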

\begin{proof}
By cycling permuting the edges, we can assume that $e_1$ and $e_2$ lie in the same ladderpole at $T_1$. Choose some lift $\widetilde{e_1}$ of $e_1$, and let $R_1$ be the edge rectangle corresponding to $\widetilde{e_1}$. We will inductively define $R_2,R_3,...$. Suppose we have defined $R_1,...,R_{j-1}$ where $e_{j-1}$ and $e_j$ lie in the same ladderpole at $T_{j-1}$. 

Let $e_j,...,e_k$ be a maximal subsequence such that $e_{i+1}$ lies $2$ ladderpoles to the left of $e_i$ at $T_i$ for $j \leq i <k$.
Let $\widetilde{e_{j-1}}$ be the edge of $\widetilde{\Delta}$ corresponding to $R_{j-1}$. Let $s_{j-1} \in \mathcal{S}$ be the corner of $R_{j-1}$ that corresponds to a vertex of $\widetilde{\Delta}$ covering $T_{j-1}$. Let $E_{j-1}$ be the tetrahedron of $\widetilde{\Delta}$ which has $\widetilde{e_{j-1}}$ as its bottom edge. Let $\widetilde{f_j}$ be the blue side edge of $E_{j-1}$ that has a vertex at $s_{j-1}$. Let $F_j$ be the tetrahedron of $\widetilde{\Delta}$ which has $\widetilde{f_j}$ as its bottom edge. Let $\widetilde{e'_j}$ be the red side edge of $F_j$ with a vertex at $s_{j-1}$. Let $e'_j$ be the image of $\widetilde{e'_j}$ in $\Delta$. See \Cref{fig:edgeseqtoedgepath1}.

\begin{figure} 
    \centering
    \fontsize{14pt}{14pt}\selectfont
    \resizebox{!}{8cm}{
\begingroup%
  \makeatletter%
  \providecommand\color[2][]{%
    \errmessage{(Inkscape) Color is used for the text in Inkscape, but the package 'color.sty' is not loaded}%
    \renewcommand\color[2][]{}%
  }%
  \providecommand\transparent[1]{%
    \errmessage{(Inkscape) Transparency is used (non-zero) for the text in Inkscape, but the package 'transparent.sty' is not loaded}%
    \renewcommand\transparent[1]{}%
  }%
  \providecommand\rotatebox[2]{#2}%
  \newcommand*\fsize{\dimexpr\f@size pt\relax}%
  \newcommand*\lineheight[1]{\fontsize{\fsize}{#1\fsize}\selectfont}%
  \ifx\svgwidth\undefined%
    \setlength{\unitlength}{436.63009784bp}%
    \ifx\svgscale\undefined%
      \relax%
    \else%
      \setlength{\unitlength}{\unitlength * \real{\svgscale}}%
    \fi%
  \else%
    \setlength{\unitlength}{\svgwidth}%
  \fi%
  \global\let\svgwidth\undefined%
  \global\let\svgscale\undefined%
  \makeatother%
  \begin{picture}(1,0.84777838)%
    \lineheight{1}%
    \setlength\tabcolsep{0pt}%
    \put(0,0){\includegraphics[width=\unitlength,page=1]{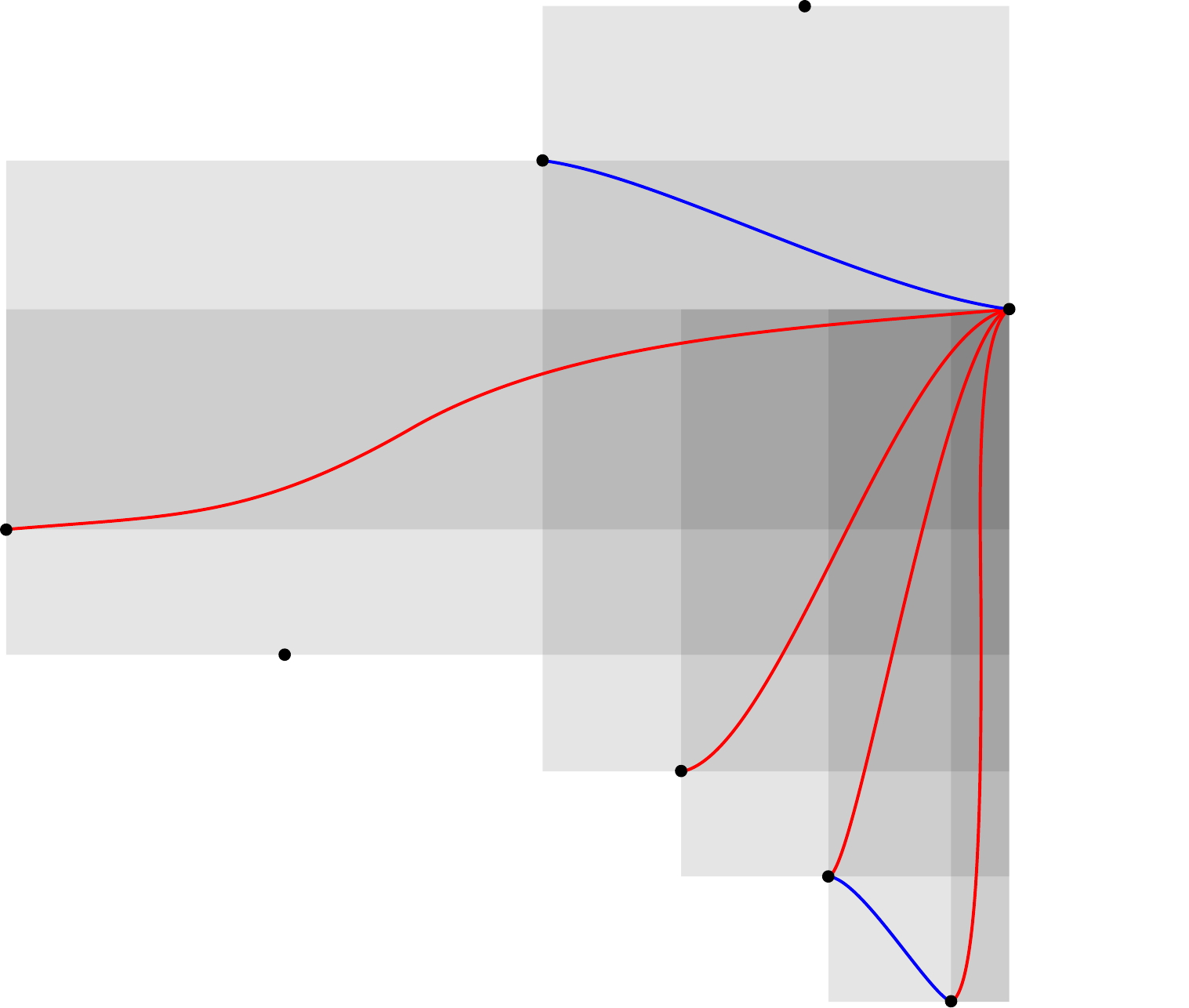}}%
    \put(0.2270362,0.4757699){\color[rgb]{0,0,0}\makebox(0,0)[lt]{\lineheight{1.25}\smash{\begin{tabular}[t]{l}$\widetilde{e_{j-1}}$\end{tabular}}}}%
    \put(0.66286755,0.65818905){\color[rgb]{0,0,0}\makebox(0,0)[lt]{\lineheight{1.25}\smash{\begin{tabular}[t]{l}$\widetilde{f_j}$\end{tabular}}}}%
    \put(0.83545856,0.27101951){\color[rgb]{0,0,0}\makebox(0,0)[lt]{\lineheight{1.25}\smash{\begin{tabular}[t]{l}$\widetilde{e_j}$\end{tabular}}}}%
    \put(0.69457072,0.0372894){\color[rgb]{0,0,0}\makebox(0,0)[lt]{\lineheight{1.25}\smash{\begin{tabular}[t]{l}$\widetilde{g_j}$\end{tabular}}}}%
    \put(0.62954344,0.34647569){\color[rgb]{0,0,0}\makebox(0,0)[lt]{\lineheight{1.25}\smash{\begin{tabular}[t]{l}$\widetilde{e'_j}$\end{tabular}}}}%
    \put(0.69852179,0.24341358){\color[rgb]{0,0,0}\makebox(0,0)[lt]{\lineheight{1.25}\smash{\begin{tabular}[t]{l}$\widetilde{h_j}$\end{tabular}}}}%
  \end{picture}%
\endgroup%
}
    \caption{Constructing $\widetilde{f_j}$, $\widetilde{e'_j}$, and $\widetilde{e_j}$.} 
    \label{fig:edgeseqtoedgepath1}
\end{figure}

Meanwhile, let $l$ be the ladderpole curve on $T_{j-1}$ determined by both $e'_j$ and $e_j$. Let $a$ be the sub-arc of $l$ going from $e_{j-1}$ to $e'_j$, and let $b$ be the sub-arc going from $e'_j$ to $e_j$. Here if $e'_j=e_j$, we take $b=l$. $a * b$ crosses the ladderpole transversal some $x \in [0,2]$ times. Lift $a * b * l^{\beta_{j-1}-x} * e_j$ to a path starting at the endpoint of $\widetilde{e_{j-1}}$ at $s_{j-1}$, and let $R_j$ be the edge rectangle corresponding to the edge $\widetilde{e_j}$ that $e_j$ lifts to.

Let $R'_j$ be the edge rectangle corresponding to $\widetilde{e'_j}$. We note that by construction, any point in $R_{j-1} \cap R'_j$ is closer to the vertical side of $R_{j-1}$ containing $s_{j-1}$ than the core point $c(R_{j-1})$. 

We will then inductively define $R_i$ for $j<i \leq k$. Let $s_{i-1} \in \mathcal{S}$ be the corner of $R_{i-1}$ that corresponds to a vertex of $\widetilde{\Delta}$ covering $T_{i-1}$. Let $t_{i-1}$ be the tetrahedron of $\widetilde{\Delta}$ which has $\widetilde{e_{i-1}}$ as the top edge. Let $\widetilde{g_{i-1}}$ be the blue side edge of $t_i$ with a vertex at $s_{i-1}$, and $\widetilde{h_{i-1}}$ be the red side edge of $t_i$ with a vertex at $s_{i-2}$. Let $g_{i-1}$ be the image of $\widetilde{g_{i-1}}$ in $\Delta$. Let $G_{i-1}$ be the tetrahedron of $\widetilde{\Delta}$ which has $\widetilde{g_{i-1}}$ as the bottom edge. Let $\widetilde{f_i}$ be the blue side edge of $G_{i-1}$ that has a vertex at $s_{i-1}$. Let $f_i$ be the image of $\widetilde{f_i}$ in $\Delta$. Let $F_i$ be the tetrahedron of $\widetilde{\Delta}$ which has $\widetilde{f_i}$ as its bottom edge. Let $\widetilde{e'_i}$ be the red side edge of $F_i$ with a vertex at $s_{i-1}$. Let $e'_i$ be the image of $\widetilde{e'_i}$ in $\Delta$. See \Cref{fig:edgeseqtoedgepath2}.

\begin{figure} 
    \centering
    \fontsize{14pt}{14pt}\selectfont
    \resizebox{!}{8.5cm}{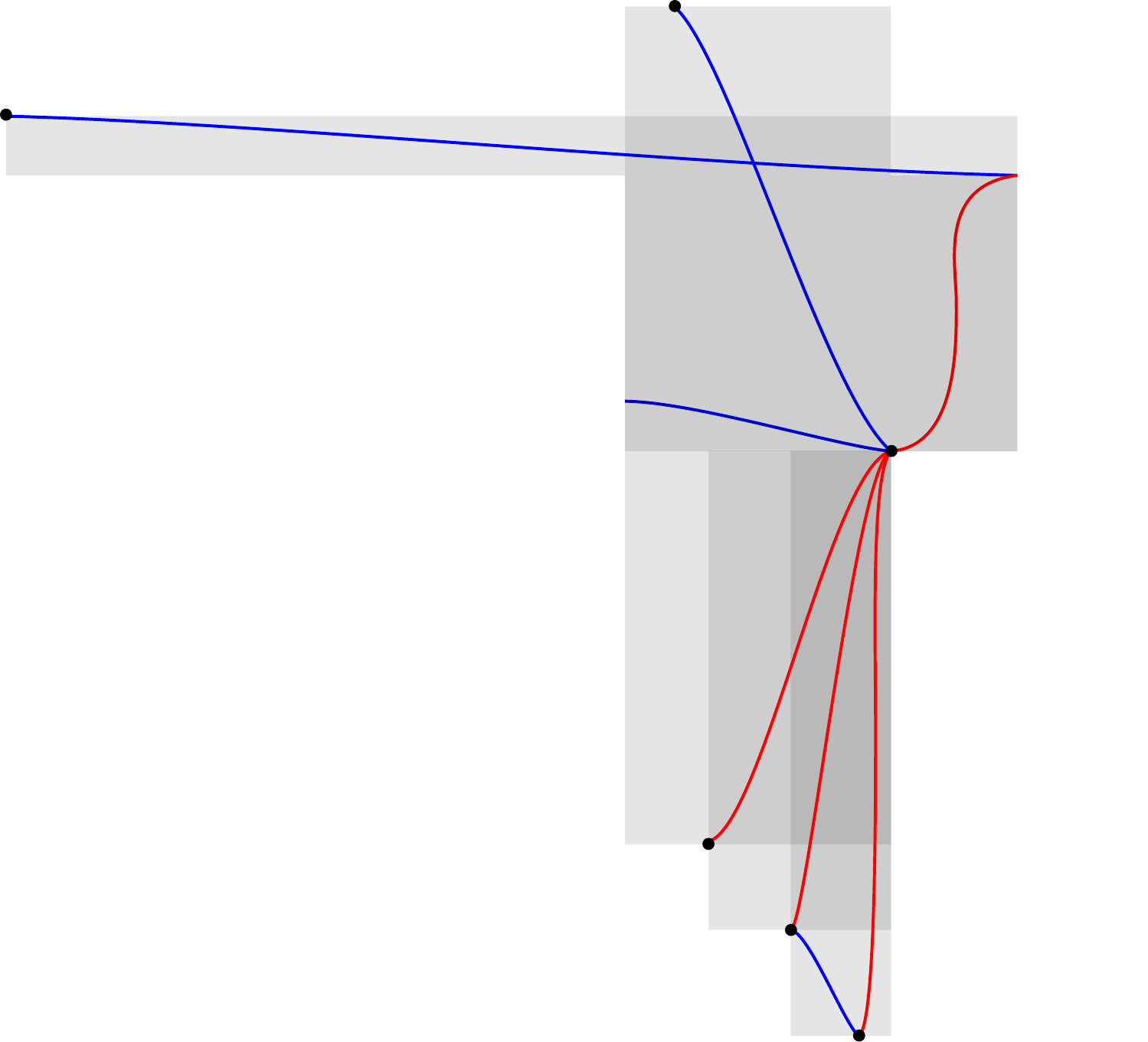}
    \caption{Constructing $\widetilde{f_i}$, $\widetilde{e'_i}$, and $\widetilde{e_i}$ for $j<i \leq k$.} 
    \label{fig:edgeseqtoedgepath2}
\end{figure}

Let $l$ be the ladderpole curve on $T_{i-1}$ determined by both $e'_i$ and $e_i$. Let $a$ be a path going from $e_{i-1}$ to $e'_i$, and let $b$ be the sub-arc of $l$ going from $e'_i$ to $e_i$. Here, again, if $e'_i=e_i$, then we take $b=l$. $a$ can be taken to be an edge path on the boundary triangulation going from $e_{i-1}$ to $g_{i-1}$ to $f_i$ then $e'_i$, hence crosses the ladderpole transversal some $y \in [-3,3]$ times, while $b$ crosses the ladderpole transversal some $z \in [0,1]$ times, hence $a * b$ crosses the ladderpole transversal some $x=y+z \in [-3,4]$ times. Lift $a * b * l^{\beta_{i-1}-x} * e_i$ to a path starting at the endpoint of $\widetilde{e_{i-1}}$ at $s_{i-1}$, and let $R_i$ be the edge rectangle corresponding to the edge $\widetilde{e_i}$ that $e_i$ lifts to. We note that we use the fact that $\beta_{i-1} \geq 4$ here.

By construction, for each $j<i \leq k$, the edge rectangle corresponding to $\widetilde{f_i}$ is taller than that of $\widetilde{f_{i-1}}$ and the edge rectangle corresponding to $\widetilde{f_{i-1}}$ is wider than that of $\widetilde{e'_{i-1}}$, hence wider than that of $\widetilde{f_i}$. From this, we can construct a staircase $S_{j,k}$ for $R_j,...,R_k$.

Following the construction, we arrive at $R_{P+1}$ eventually, which is the edge rectangle corresponding to $\widetilde{e_{P+1}}$. But $\widetilde{e_{P+1}}$ and $\widetilde{e_1}$ are both lifts of $e_1$, so $\widetilde{e_{P+1}} = g \cdot \widetilde{e_1}$ for some $g \in \pi_1(M^\circ)$. By naturality of the construction, $R_{i+rP} = g^r \cdot R_i$ for all $r \geq 0$. We thus extend the definition of $R_i$ by setting $R_{i+rP} = g^r \cdot R_i$ for all $r$. 

Suppose $e_{j_1},...,e_{k_1}$ and $e_{j_2},...,e_{k_2}$ are two consecutive maximal subsequences such that $e_{i+1}$ lies $2$ ladderpoles to the left of $e_i$ at $T_i$ for $j_s \leq i <k_s$ (hence $j_2=k_1+1$), then it follows from the definition of staircases and the fact that $R_{j_2}$ is taller than $R_{k_1}$ at $s_{j_2}$ that we have property (1) below.
\begin{enumerate}
    \item $S_{j_1,k_1}$ is wider than $S_{j_2,k_2}$ and $S_{j_2,k_2}$ is taller than $S_{j_1,k_1}$
    \item Any point $x \in R_{k_1} \cap S_{j_2,k_2}$ lies closer to the vertical side of $R_{k_1}$ containing $s_{k_1}$ than the core point $c(R_{k_1})$
\end{enumerate}
For property (2), notice that $R'_{j_2}$ is wider than $S_{j_2,k_2}$, so this follows from the corresponding property of $R_{k_1} \cap R'_{j_2}$ pointed out before.

Now consider the intersection of all $S_{j,k}$. By property (1) above, the intersection is nonempty. Moreover, since the set is invariant under $g$, it must consist of a single point $p$ corresponding to an orbit of $\widehat{\phi}$ covering a closed orbit $\gamma$ of $\phi$ of homotopy class $g$.

This shows that $\{R_i\}$ is a red $g$-edge path. We now show that it is nice: If $R_{i-1}$ and $R_i$ lie in the same quadrant at $s_{i-1}$ and $R_i$ and $R_{i+1}$ lie in the same quadrant at $s_i$, then $p$, being inside $S_{i,i} \cap S_{i+1,k}$, lies closer to the vertical side of $R_i$ containing $s_i$ than the core point $c(R_i)$ by property (2) above.

It remains to bound the flow graph complexity of $\gamma$. What we will do is bound the length of a cycle $c$ of the dual graph which maps to $\gamma$ under $F_\Gamma$ and apply \Cref{prop:dualgraphflowgraph}. 

We will use the following observation repeatedly in our computation.

Observation: Let $e$ be an edge. For any tetrahedron $t$ with $e$ as a side edge, there exists a path in the dual graph from $t$ to the tetrahedron with $e$ as the bottom edge of length $\leq \delta$.

Let $e_j,...,e_k$ be a maximal subsequence such that $e_{i+1}$ lies $2$ ladderpoles to the left of $e_i$ at $T_i$ for $j \leq i <k$. By the observation, there is a $\widetilde{\Gamma}$-path from $E_{j-1}$ to $F_j$ of length $\leq \delta$. Then for $j \leq i \leq k$, there is a $\widetilde{\Gamma}$-path from $F_i$ to the tetrahedron $E'_i$ with $\widetilde{e'_i}$ as the bottom edge of length $\leq \delta$. There exists a $\widetilde{\Gamma}$-path from $E'_i$ to $E_i$ as the bottom edge of length $\leq (\beta_{i-1}+4) \lambda \delta$. For $i<k$, this includes as a sub-arc a $\widetilde{\Gamma}$-path from $E'_i$ to the tetrahedron $H_i$ with $\widetilde{h_i}$ as the bottom edge of length $\leq (\beta_{i-1}+4) \lambda \delta$. There exists a $\widetilde{\Gamma}$-path from $H_i$ to $G_i$ of length $\leq \delta$. Finally, there exists a $\widetilde{\Gamma}$-path from $G_i$ to $F_{i+1}$ of length $\leq \delta$. Putting everything together, we see that there is a $\widetilde{\Gamma}$-path from $E_{j-1}$ to $E_k$ of length 
\begin{align*}
    &\leq \delta + (k-j) (\delta + (\max \beta_i + 4) \lambda \delta + \delta + \delta) + (\delta + (\max \beta_i + 4) \lambda \delta) \\
    &\leq (k-j+1) ((\max \beta_i + 4) \lambda + 3) \delta
\end{align*}
Hence there exists a $\widetilde{\Gamma}$-path from $E_1$ to $E_{N+1}$ of length $\leq ((\max \beta_i + 4) \lambda + 3) \delta P$, which descends to a $\Gamma$-cycle $c$, with the same bound on length, which is homotopic to $\gamma$. Now apply \Cref{prop:dualgraphflowgraph}. 
\end{proof}

Note that the $\widetilde{\Gamma}$-path constructed in the proof of \Cref{lemma:edgeseqtoedgepath} contains as a subpath $\beta_i-4$ consecutive lifts of a $\Gamma$-cycle $c_i$ that is homotopic to the orbit in $\mathcal{C}$ corresponding to $T_i$, between $E'_{i+1}$ and $H_{i+1}$ for each $i$. Thus, up to increasing some $\beta_i$ by $1$, which concatenates the constructed $\Gamma$-cycle $c$ with one more copy of $c_i$, we can arrange for $c$ to be a primitive cycle. However, this does not guarantee that the orbit $\gamma=F_\Gamma(c)$ is primitive. If one wants $\gamma$ to be primitive (equivalently, $g$ to be primitive) in this construction, which is what we will need in the proof of \Cref{thm:closedBirkhoffsection}, one needs to work harder.

We first inspect the dynamic plane $D_i$ determined by a lift $\widetilde{c_i}$ of $c_i$. Suppose $c_i$ has length $l_i$. We label the vertices of $\widetilde{c_i}$ as $(v^i_j)_{j \in \mathbb{Z}}$. One can check that $c_i$ lies on an annulus face of the complementary region of the stable branched surface that contains $T_i$. The two branch cycles on the boundary of this face lift to two branch lines on $D_i$. Any infinite $\widetilde{\Phi}$-path must enter the region $R_i$ bounded by these two branch lines eventually. See \Cref{fig:singorbitdyanmicplane} for a picture of the form of $D_i$. Also see \cite[Section 5.1.1]{LMT20} for a similar discussion. In particular this implies that if $\alpha$ is a $\widetilde{\Phi}$-path starting from a point $x$ on $\partial \Delta(v^i_j)$ outside of $R_i$ and ending on a point $y$ on $\partial \Delta(v^i_{j+l_i})$, then the distance between $y$ and $R_i$ on $\partial \Delta(v^i_{j+l_i})$ must be strictly less than the distance between $x$ and $R_i$ on $\partial \Delta(v^i_j)$.

\begin{figure} 
    \centering
    \fontsize{14pt}{14pt}\selectfont
    \resizebox{!}{4.5cm}{
\begingroup%
  \makeatletter%
  \providecommand\color[2][]{%
    \errmessage{(Inkscape) Color is used for the text in Inkscape, but the package 'color.sty' is not loaded}%
    \renewcommand\color[2][]{}%
  }%
  \providecommand\transparent[1]{%
    \errmessage{(Inkscape) Transparency is used (non-zero) for the text in Inkscape, but the package 'transparent.sty' is not loaded}%
    \renewcommand\transparent[1]{}%
  }%
  \providecommand\rotatebox[2]{#2}%
  \newcommand*\fsize{\dimexpr\f@size pt\relax}%
  \newcommand*\lineheight[1]{\fontsize{\fsize}{#1\fsize}\selectfont}%
  \ifx\svgwidth\undefined%
    \setlength{\unitlength}{433.82772503bp}%
    \ifx\svgscale\undefined%
      \relax%
    \else%
      \setlength{\unitlength}{\unitlength * \real{\svgscale}}%
    \fi%
  \else%
    \setlength{\unitlength}{\svgwidth}%
  \fi%
  \global\let\svgwidth\undefined%
  \global\let\svgscale\undefined%
  \makeatother%
  \begin{picture}(1,0.50312656)%
    \lineheight{1}%
    \setlength\tabcolsep{0pt}%
    \put(0,0){\includegraphics[width=\unitlength,page=1]{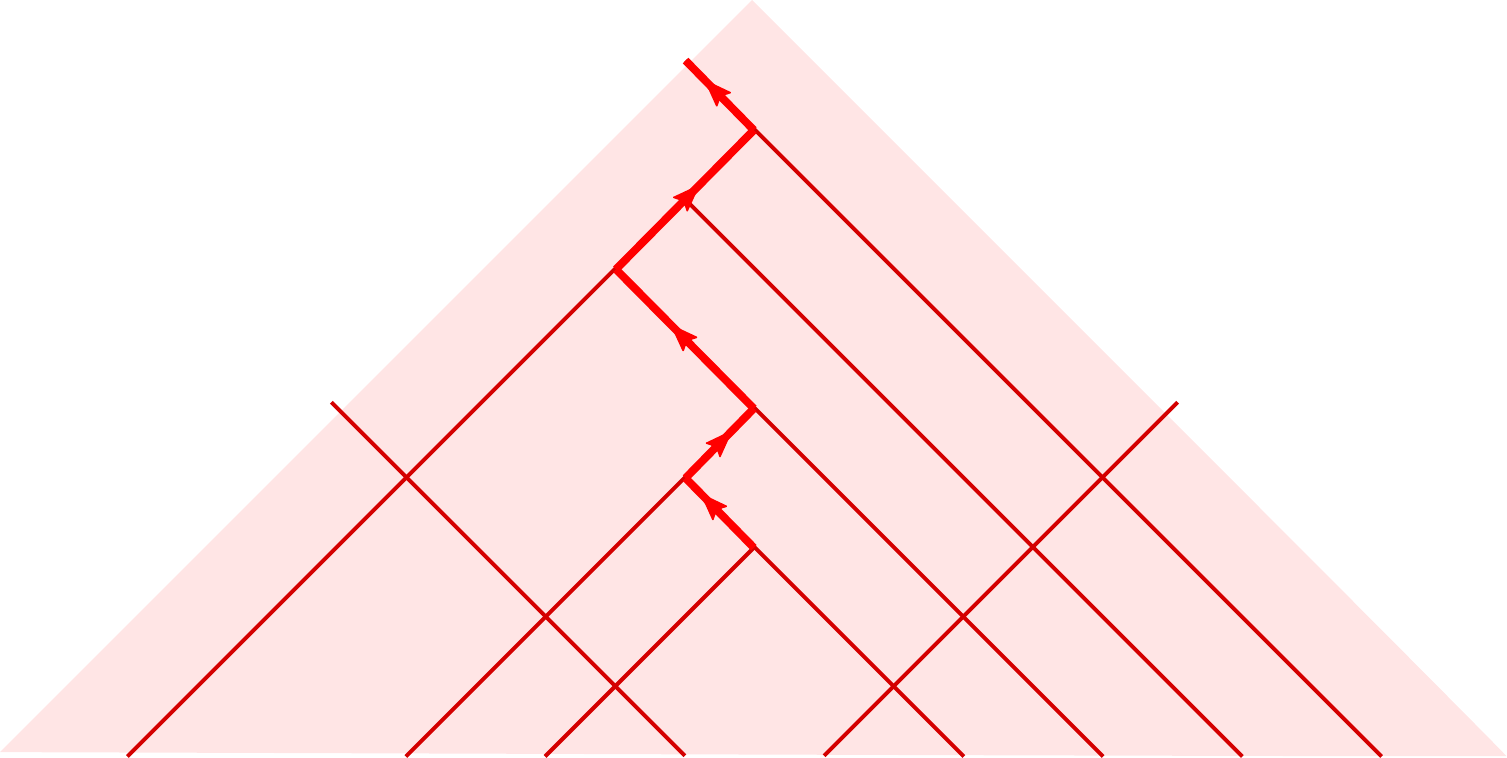}}%
    \put(0.19071435,0.25150266){\color[rgb]{0,0,0}\makebox(0,0)[lt]{\lineheight{1.25}\smash{\begin{tabular}[t]{l}$l_1$\end{tabular}}}}%
    \put(0.77034368,0.25150266){\color[rgb]{0,0,0}\makebox(0,0)[lt]{\lineheight{1.25}\smash{\begin{tabular}[t]{l}$l_2$\end{tabular}}}}%
  \end{picture}%
\endgroup%
}
    \caption{The dynamic plane $D_i$ associated to a lift $\widetilde{c_i}$ of $c_i$. There are branch lines $l_1$ and $l_2$ bounding a region $R_i$ such that any infinite $\widetilde{\Phi}$-path must enter the region $R_i$ eventually.} 
    \label{fig:singorbitdyanmicplane}
\end{figure}

Moreover, since the segment of each $\partial \Delta(v^i_j)$ within $R_i$ lies on the bottom sides of only two sectors, $D_i$ has at most two $[c_i]$-invariant bi-infinite $\widetilde{\Phi}$-paths (hence $D_i$ has at most one AB strip) and any $\widetilde{\Phi}$-path starting at a point on $\partial \Delta(v^i_j)$ in $R_i$ and ending on $\partial \Delta(v^i_{j+l_i})$ will converge into one of these paths. This also implies that any point on $\partial \Delta(v^i_j)$ in $R_i$ is $\leq \delta$ edges away from $v^i_j$.

Returning to the $\widetilde{\Gamma}$-path constructed in the proof of \Cref{lemma:edgeseqtoedgepath}, we elongate it by equivariance so that it is a lift of $c$, then consider the dynamic plane $D$ associated to it. $D$ contains $\beta_i-4$ adjacent fundamental regions of $D_i$, the union of which we denote by $D'_i$. We identify each $D'_i$ with the region between $\partial \Delta(v^i_0)$ and $\partial \Delta(v^i_{(\beta_i-4)l_i})$ on $D_i$. From the proof of \Cref{lemma:edgeseqtoedgepath}, we know that the segment of $\widetilde{c}$ between $D'_i$ and $D'_{i+1}$ has length $\leq (16\lambda+3)\delta$. In particular the $\widetilde{\Phi}$-path starting at $v^i_{(\beta_i-4)l_i}$ and ending on $\partial \Delta(v^{i+1}_0)$ is $\leq (16\lambda+3)\delta$ edges away from $v^{i+1}_0$ by \Cref{lemma:upperbounds}. Hence if $\beta_i-4 \geq (16 \lambda+3) \delta + \delta + 1$, then the infinite $\widetilde{\Phi}$-path starting at any point on $\partial \Delta(v^1_0) \cap R_1$ must converge into one of the $[c_i]$-invariant $\widetilde{\Phi}$-paths within each $D'_i$ that it passes through. We illustrate the situation schematically in \Cref{fig:primitive}. As reasoned in the proof of \Cref{prop:dualgraphflowgraph}, such a path eventually becomes periodic, hence we can find a $g$-invariant bi-infinite $\widetilde{\Phi}$-path on $D$ satisfying the same property, assuming that $D$ has any $g$-invariant bi-infinite $\widetilde{\Phi}$-paths at all, which quotients down to an element of $h(c)$.

\begin{figure} 
    \centering
    \fontsize{24pt}{24pt}\selectfont
    \resizebox{!}{7.5cm}{
\begingroup%
  \makeatletter%
  \providecommand\color[2][]{%
    \errmessage{(Inkscape) Color is used for the text in Inkscape, but the package 'color.sty' is not loaded}%
    \renewcommand\color[2][]{}%
  }%
  \providecommand\transparent[1]{%
    \errmessage{(Inkscape) Transparency is used (non-zero) for the text in Inkscape, but the package 'transparent.sty' is not loaded}%
    \renewcommand\transparent[1]{}%
  }%
  \providecommand\rotatebox[2]{#2}%
  \newcommand*\fsize{\dimexpr\f@size pt\relax}%
  \newcommand*\lineheight[1]{\fontsize{\fsize}{#1\fsize}\selectfont}%
  \ifx\svgwidth\undefined%
    \setlength{\unitlength}{354.73159109bp}%
    \ifx\svgscale\undefined%
      \relax%
    \else%
      \setlength{\unitlength}{\unitlength * \real{\svgscale}}%
    \fi%
  \else%
    \setlength{\unitlength}{\svgwidth}%
  \fi%
  \global\let\svgwidth\undefined%
  \global\let\svgscale\undefined%
  \makeatother%
  \begin{picture}(1,1.35874324)%
    \lineheight{1}%
    \setlength\tabcolsep{0pt}%
    \put(0,0){\includegraphics[width=\unitlength,page=1]{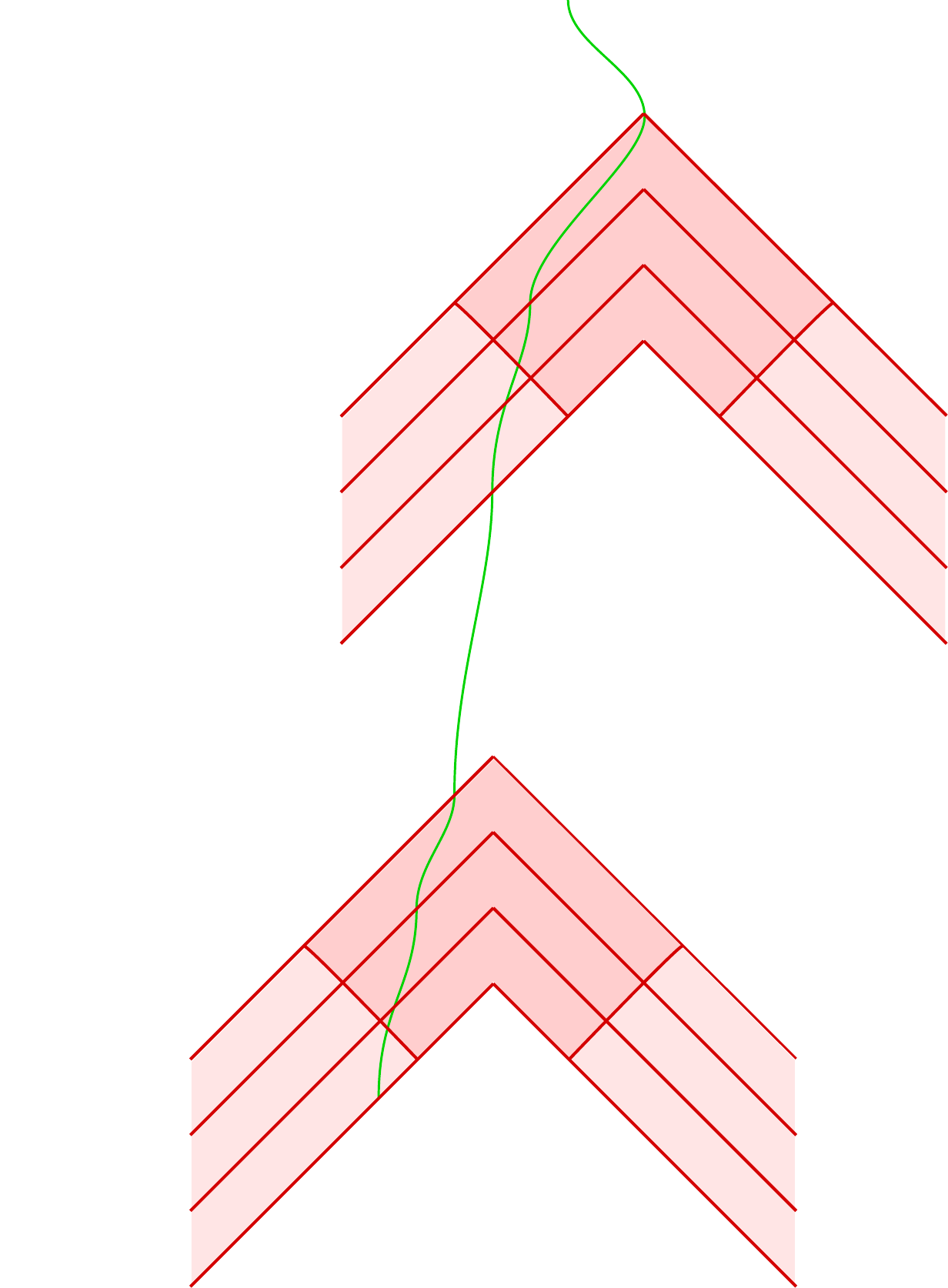}}%
    \put(-0.00384038,0.76913376){\color[rgb]{0,0,0}\makebox(0,0)[lt]{\lineheight{1.25}\smash{\begin{tabular}[t]{l}$D'_{i+1}$\end{tabular}}}}%
    \put(-0.00384038,0.08990217){\color[rgb]{0,0,0}\makebox(0,0)[lt]{\lineheight{1.25}\smash{\begin{tabular}[t]{l}$D'_i$\end{tabular}}}}%
  \end{picture}%
\endgroup%
}
    \caption{A schematic picture of the proof of \Cref{lemma:edgepathprimitive}. By increasing the number of fundamental regions in $D'_i$, we can obtain an $g$-invariant bi-infinite $\widetilde{\Phi}$-path on $D$ that meets the $[c_i]$-invariant $\widetilde{\Phi}$-paths within each $D'_i$.}
    \label{fig:primitive}
\end{figure}

The significance of this is that if we now increase $\beta_i$ by 1, then we concatenate this element of $h(c)$ by a $\Phi$-cycle homotopic to $c_i$. As in the case of $\Gamma$-cycles, this can be used to ensure that $h(c)$ contains a primitive element. Moreover, since each $D_i$ can have at most one AB strip, $D$ cannot have more than one AB strip as well, so this primitive element of $h(c)$ does not lie between AB strips. Hence using \Cref{prop:flowgraphencode}, we can guarantee that $\gamma=F_\Phi(h(c))$ is primitive. If $D$ does not contain $g$-invariant bi-infinite $\widetilde{\Phi}$-paths, then we can apply a similar argument to $c^2$.

We record this as an addendum to \Cref{lemma:edgeseqtoedgepath}. 

\begin{lemma} \label{lemma:edgepathprimitive}
In the setting of \Cref{lemma:edgeseqtoedgepath}, if each $\beta_i \geq (16 \lambda+4) \delta +1$, then up to increasing some $\beta_i$ by 1, we can assume that $g$ is primitive for the constructed nice red $g$-edge path.
\end{lemma}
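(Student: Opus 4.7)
The plan is to formalize and assemble the observations developed in the preceding discussion. I would begin with the dual graph cycle $c$ constructed in the proof of \Cref{lemma:edgeseqtoedgepath}, whose lift $\widetilde{c}$ passes through $\beta_i-4$ consecutive fundamental regions of the dynamic plane $D_i$ associated to the branch cycle $c_i$ at $T_i$. The aim is to produce a primitive element of $h(c)$ (or of $h(c^2)$ if $h(c)=\emptyset$), and then deduce that $\gamma$ itself is primitive.

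First I would pick an arbitrary point $x$ on $\partial\Delta(v^1_0)\cap R_1$ and follow the infinite $\widetilde{\Phi}$-path $\alpha$ starting at $x$. The hypothesis $\beta_i\geq (16\lambda+4)\delta+1$, i.e.\ $\beta_i-4\geq(16\lambda+3)\delta+\delta+1$, together with \Cref{lemma:upperbounds} applied to the segment of $\widetilde{c}$ between $D'_i$ and $D'_{i+1}$ (of length $\leq(16\lambda+3)\delta$) and the strict distance-contraction of $\widetilde{\Phi}$-paths outside $R_i$ noted in the discussion, guarantees that $\alpha$ enters $R_{i+1}$ somewhere inside $D'_{i+1}$. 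Once inside $R_{i+1}$, it must converge into one of the (at most two) $[c_{i+1}]$-invariant bi-infinite $\widetilde{\Phi}$-paths of $D_{i+1}$ before exiting. Applying this inductively across every $D'_i$ produces a $g$-invariant bi-infinite $\widetilde{\Phi}$-path on $D$ (if any exist; otherwise pass to $c^2$ and repeat), and this path quotients to a $\Phi$-cycle $c'\in h(c)$ whose structure between the $D'_i$ strata is recorded by how many $c_i$-periods were absorbed in each pass.

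The key leverage step is to note that increasing $\beta_i$ by $1$ has the effect of inserting exactly one extra traversal of the $[c_i]$-invariant path, which at the level of $\Phi$-cycles amounts to concatenating one additional copy of $c_i$ into $c'$. Thus, by adjusting some $\beta_i$ by at most $1$, I can kill any non-trivial cyclic symmetry of $c'$ in exactly the same combinatorial manner as the analogous primitivity argument for $\Gamma$-cycles mentioned after \Cref{lemma:edgeseqtoedgepath}: if $c'$ were a proper $k$-fold power, bumping a single $\beta_i$ unbalances the multiplicities of the $c_i$ factors and breaks this symmetry.

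The main obstacle, and the final step, is to upgrade primitivity of $c'$ to primitivity of $\gamma$. Here I would invoke \Cref{prop:flowgraphencode}: since every $D_i$ carries at most one AB strip, the same is true of $D$, so $\gamma\notin\mathcal{C}$ puts us in case (2) or (3) of the proposition, giving $|F_\Phi^{-1}(\gamma)|=1$ or $|F_\Phi^{-1}(\gamma^2)|\leq\delta$ with all preimages of equal length. In either case $\gamma=\gamma_0^k$ with $k>1$ would force the primitive cycle $c'$ to be a proper $k$-fold power of a shorter $\Phi$-cycle representing $\gamma_0$ (or $\gamma_0^2$), contradicting our construction. Hence $g$ is primitive, as desired.
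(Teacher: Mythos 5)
Your proposal is correct and follows essentially the same route as the paper: the lemma is stated there precisely as an addendum recording the preceding discussion, which constructs the $g$-invariant $\widetilde{\Phi}$-path threading the $[c_i]$-invariant paths in each $D'_i$, observes that bumping $\beta_i$ by $1$ concatenates one more copy of a cycle homotopic to $c_i$ to break any proper-power structure, and then uses the single-AB-strip observation together with \Cref{prop:flowgraphencode} to pass from primitivity of the $\Phi$-cycle to primitivity of $\gamma$. Your reassembly of these steps, including the fallback to $c^2$ when $D$ carries no $g$-invariant bi-infinite $\widetilde{\Phi}$-path, matches the paper's argument.
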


\subsection{From edge paths to broken transverse helicoids} \label{subsec:helicoidconstr}

\begin{constr} \label{constr:helicoid}
Suppose we are given a winding red $g$-edge path $(R_i)$ of period $P$, where $g$ quotients an orbit $\widehat{\gamma}$ of $\widehat{\phi}$ to a closed orbit $\gamma$ of $\phi$. We construct a broken transverse surface as follows.

For each $i$ such that $R_i$ and $R_{i+1}$ lie in the same quadrant at $s_i$, choose a path $\alpha_i$ on $\mathcal{P}$ from $p$ to $s_i$ that lies in the intersection of the slope for $R_j,...,R_i$ and the slope for $R_{i+1},...,R_k$, that is topologically transverse to $\mathcal{P}^s$ and $\mathcal{P}^u$. This ensures that within each staircase $S_{j,k}$, the paths $\alpha_j$ and $\alpha_k$ are disjoint. We can also arrange it so that $\alpha_{i+rP}=g^r \alpha_i$. Next, for each of the remaining $i$, take a path $\alpha_i$ on $\mathcal{P}$ from $p$ to $s_i$ in the slope $S_{j,k}$ that $s_i$ lies in. Again, we make the choice so that $\alpha_i$, $j \leq i \leq k$ are topologically transverse to $\mathcal{P}^s$ and $\mathcal{P}^u$, are mutually disjoint within the staircase, and so that $\alpha_{i+rP}=g^r \alpha_i$.

Now take a sequence of points $\{x_i\}$ on $\widehat{\gamma}$ such that $x_{i+rP}=g^r \cdot x_i$ and $x_{i+1}$ lies further along $\widehat{\gamma}$ as a flow line than $x_i$. Also let $\widetilde{e_i}$ be the edge of $\widetilde{\Delta}$ that corresponds to the edge rectangle $R_i$. For each $i$, consider the restriction of the fibration $\widetilde{M} \to \mathcal{O}$ to $\alpha_i$, which is a trivial bundle, and choose a lift of $\alpha_i$ to $\widetilde{\alpha_i}$ starting at $x_i \in \widetilde{\gamma}$. Then for each $i$ consider the restriction of the fibration to the region bounded by $\alpha_i$, $\widetilde{e_i}$, and $\alpha_{i+1}$, minus the points $\widehat{\gamma}$, $s_i$, and $s_{i+1}$. For generic choices of lifts $\widetilde{\alpha_i}$ and placements of $\widetilde{e_i}$, we can lift this region to a hexagonal broken transverse surface $\widetilde{H_i}$ with horizontal boundary along $\widetilde{\alpha_i}$, $\widetilde{e_i}$, $\widetilde{\alpha_{i+1}}$, and vertical boundary along $\widehat{\gamma}$, and the orbits of $\widehat{\phi}$ corresponding to $s_i$ and $s_{i+1}$. Again, we make the choice so that $\widetilde{H_{i+rP}}=g^r \cdot \widetilde{H_i}$.

Finally, take the union over all $\widetilde{H_i}$, possibly resolving any turning points on the orbits of $\widehat{\phi}$ corresponding to the $s_i$ as in the last step of Fried resolution, to get a helicoidal broken transverse surface $\widetilde{H(R_i)}$ with one boundary component lying along $\widehat{\gamma}$ and the sides of the other boundary component that lie in $\partial_h \widetilde{H(R_i)}$ being $(\widetilde{e_i})$. See \Cref{fig:helicoid}. This surface is $g$-invariant, so we can take its quotient to get a broken transverse surface $H(R_i)$ with boundary along $\gamma$ and $\{e_i\}$.

Moreover, if $g$ is primitive, then the boundary component of $H(R_i)$ along $\gamma$ is embedded. In general, if $g$ is the $r^{\text{th}}$ power of a primitive element, then the boundary component of $H(R_i)$ along $\gamma$ $r$-fold covers $\gamma$.

\begin{figure} 
    \centering
    \resizebox{!}{5.5cm}{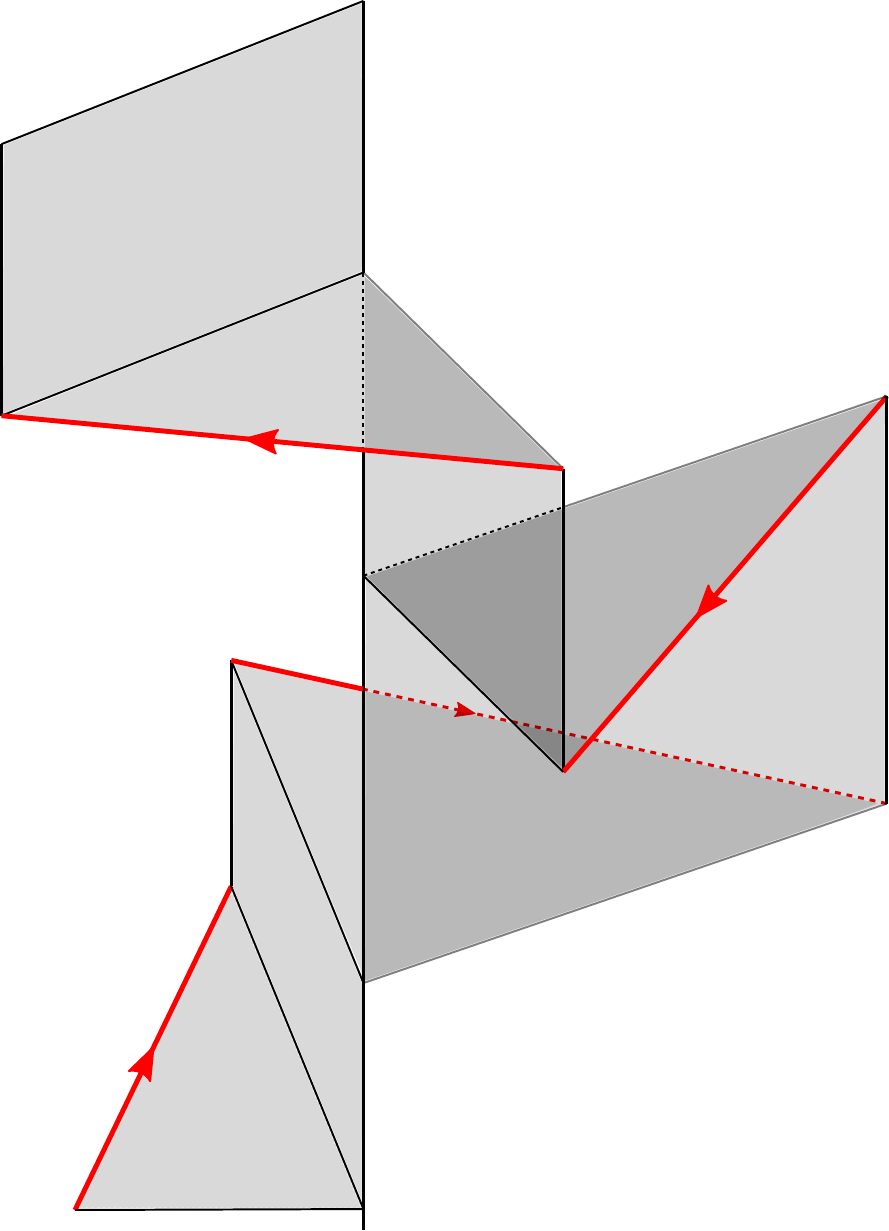}
    \caption{The helicoidal broken transverse surface $\widetilde{H(R_i)}$ with boundary along $\widehat{\gamma}$ and $(\widetilde{e_i})$.} 
    \label{fig:helicoid}
\end{figure}
\end{constr}

\begin{lemma} \label{lemma:helicoidintersection}
If $(R_i)$ is a winding red $g$-edge path of period $P$ and $\gamma'$ is a closed orbit of $\phi$ that is not an element of $\mathcal{C}$ of flow graph complexity $\mathfrak{c}$, then $\gamma'$ intersects $H(R_i)$ at most $\leq 32 \delta^4 \mathfrak{c} P$ times.
\end{lemma}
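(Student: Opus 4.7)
The plan is to reduce the count to one of tetrahedron rectangles containing a single point of $\mathcal{P}$, then invoke \Cref{lemma:complexityrectbound}. By \Cref{constr:helicoid}, the helicoid $H(R_i)$ is the quotient by $\langle g \rangle$ of $\bigcup_i \widetilde{H_i}$, and since the edge path has period $P$, a fundamental domain of this action consists of $P$ hexagonal pieces, which descend to hexagons $H_1, \ldots, H_P$ in $M$. Thus $\#(\gamma' \cap H(R_i)) \leq \sum_{i=1}^P \#(\gamma' \cap H_i)$, and it suffices to show $\#(\gamma' \cap H_i) \leq 32\delta^4 \mathfrak{c}$ for each $i$.

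Fix a lift $\widetilde{\gamma'}$ of $\gamma'$ and let $p' \in \mathcal{P}$ be the orbit-space point stabilized by $g' = [\gamma']$. Each $\widetilde{H_i}$ is an embedded hexagonal disk transverse to $\widehat{\phi}$, hence has trivial stabilizer in $\pi_1(M^\circ)$, and its translates $\{h \cdot \widetilde{H_i} : h \in \pi_1(M^\circ)\}$ are precisely the distinct lifts of $H_i$. Since $\widetilde{H_i}$ is a section of $\widehat{\phi}$ over its projection in $\mathcal{P}$, the intersection $\widetilde{\gamma'} \cap (h \cdot \widetilde{H_i})$ is a single point if $p' \in h \cdot \mathrm{proj}(\widetilde{H_i})$ and is empty otherwise. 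Quotienting by $\langle g' \rangle$,
\begin{equation*}
    \#(\gamma' \cap H_i) = \#\bigl\{\langle g' \rangle\text{-orbits of } h \in \pi_1(M^\circ) : p' \in h \cdot \mathrm{proj}(\widetilde{H_i})\bigr\}.
\end{equation*}

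The geometric heart of the argument is the claim that each $\mathrm{proj}(\widetilde{H_i}) \subset \mathcal{P}$ is contained in a single tetrahedron rectangle $T_i$. Recall that this triangular region is bounded by the paths $\alpha_i, \alpha_{i+1}$ from $p$ to the two $\mathcal{S}$-corners adjacent to $\widetilde{e_i}$, together with the veering diagonal in the corresponding edge rectangle. The winding hypothesis, together with the choices in \Cref{constr:helicoid}, ensures that $\alpha_i$ and $\alpha_{i+1}$ stay inside the slope of the ambient staircase; in each of the two cases of \Cref{defn:edgepath} (same quadrant at $s_i$, versus two-quadrants-apart) a direct inspection then shows that the triangle fits inside a tetrahedron rectangle adjacent to $\widetilde{e_i}$. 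Verifying this containment carefully is the main obstacle; once in place, the rest is bookkeeping.

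Granting the claim, $p' \in h \cdot \mathrm{proj}(\widetilde{H_i})$ implies $p' \in h \cdot T_i$, and distinct $\langle g' \rangle$-cosets of $h$ produce distinct tetrahedron rectangles $h \cdot T_i$ since tetrahedron rectangles have trivial $\pi_1(M^\circ)$-stabilizer. Therefore
\begin{equation*}
    \#(\gamma' \cap H_i) \leq \#\bigl\{\langle g' \rangle\text{-orbits of tetrahedron rectangles in }\mathcal{P}\text{ containing }p'\bigr\} \leq 32 \delta^4 \mathfrak{c},
\end{equation*}
where the last inequality is \Cref{lemma:complexityrectbound} (applicable since $\gamma' \notin \mathcal{C}$). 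Summing over $i = 1, \ldots, P$ gives the desired bound $32\delta^4 \mathfrak{c} P$.
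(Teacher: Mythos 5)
Your overall strategy is the paper's: reduce the intersection count to counting $\langle g'\rangle$-orbits of tetrahedron rectangles containing $p'$ and invoke \Cref{lemma:complexityrectbound}. But the step you yourself flag as the main obstacle — that each $\mathrm{proj}(\widetilde{H_i})$ lies in a \emph{single} tetrahedron rectangle adjacent to $\widetilde{e_i}$ — is false, and no ``direct inspection'' will rescue it. The triangle $\mathrm{proj}(\widetilde{H_i})$ has $p$ as a vertex, and the definition of an edge path only places $p$ in the staircase $S_{j,k}$ of the maximal subsequence containing $R_i$, not near $R_i$ itself. The tetrahedron rectangle above $\widetilde{e_i}$ has exactly the width of $R_i$, so it misses $p$ whenever $p$ sits in a different column of the staircase, which is the case for all but at most one index per staircase (recall that for $k-j\geq 1$ the point $p$ lies outside every $R_m$ with $j\leq m\leq k$). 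Worse, any rectangle containing both $R_i$ and $p$ would contain the corners $s_m$ of the intermediate rectangles of the staircase in its interior, and rectangles are required to avoid $\mathcal{S}$ in their interiors; so \emph{no} tetrahedron rectangle contains $\mathrm{proj}(\widetilde{H_i})$ once the edge path ever crosses a ladderpole, which all the edge paths used in this paper do.

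The correct containment is weaker: each column $[m-1,m]\times[0,m]$ of a staircase is itself a rectangle of the same width as $R_m$, hence by maximality is contained in the tetrahedron rectangle $T_m$ of the tetrahedron having $\widetilde{e_m}$ as bottom edge. So $\mathrm{proj}(\widetilde{H(R_i)})$ is covered by $\bigcup_{m\in\mathbb{Z}} T_m$, but each individual triangle spreads over several $T_m$'s. Note that patching your count with ``$\mathrm{proj}(\widetilde{H_i})\subseteq\bigcup_{m=j}^{k}T_m$'' gives an extra factor of the staircase length and overshoots the stated bound. To recover $32\delta^4\mathfrak{c}P$ you must count with multiplicity: within a fixed staircase the triangles have disjoint interiors and lie in the slope, so every point of $\mathcal{P}$ is covered by the triangles at most as often as by the family $(T_m)_{m\in\mathbb{Z}}$, and each $\langle g\rangle$-orbit of tetrahedron rectangles arises from at most $P$ residues of $m$ modulo $P$; combining this with the third statement of \Cref{lemma:complexityrectbound} gives the lemma.
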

\begin{proof}
The projection of $\widetilde{H(R_i)}$ on $\mathcal{P}$ is covered by the $\langle g \rangle$-orbits of the tetrahedron rectangles corresponding to the tetrahedra with the edges corresponding to $R_1,...,R_P$ as bottom edges. Conversely, the $\langle g \rangle$-orbit of a tetrahedron rectangle can appear at most $P$ time in such a union. So this follows from the third statement in \Cref{lemma:complexityrectbound}.
\end{proof}

\section{Constructing Birkhoff sections} \label{sec:Birkhoffsection}

In this section, we will introduce the remaining type of broken transverse surfaces which we will use in our construction, and use them to prove \Cref{thm:cuspedBirkhoffsection} and \Cref{thm:closedBirkhoffsection}.

\subsection{Shearing decomposition} \label{subsec:veeringsolidtori}

We recall the shearing decomposition of a veering triangulation, introduced by Schleimer and Segerman in \cite{SS22a}. 

Fix the same setting as before: Let $\phi$ be a pseudo-Anosov flow on an oriented closed 3-manifold $M$ without perfect fits relative to $\mathcal{C}$, and let $\Delta$ be a veering triangulation associated to $\phi$ on $M \backslash \bigcup \mathcal{C}$.

Pick and fix an equatorial square $q_t$ for each tetrahedron $t$ in $\Delta$ that is transverse to the flow $\phi$. Each square $q_t$ divides $t$ into an upper half-tetrahedron and a lower half-tetrahedron. Let $Q$ be the union over all squares. The complementary regions of $Q$ can be obtained by gluing all upper and lower half-tetrahedron along their triangular faces. These complementary regions are known as the \textit{shearing regions}. The decomposition of $\Delta$ into these shearing regions is known as \textit{the shearing decomposition}.

We describe each shearing region in more detail. Each of these is topologically a solid torus with some $l \geq 1$ \textit{upper square faces} and $l$ \textit{lower square faces} on their boundary. We call $l$ the length of the shearing region. The upper square faces meet along edges of a fixed color. We call these edges the \textit{upper helical edges}. The lower square faces also meet along edges of that same fixed color. We call these edges the \textit{lower helical edges}. If the color of these helical edges is blue/red, we say that the shearing region is \textit{blue/red} respectively. We call the union of upper/lower square faces the \textit{upper/lower boundary} of the shearing region respectively. The upper and lower boundary meet along edges of the opposite color as the shearing region. We call these edges the \textit{longitudinal edges}. Finally, the interior of the shearing region contains $2l$ triangular faces, each of which have two blue/red edges along the helical edges, and one red/blue edge along the longitudinal edges, if the shearing region is blue/red respectively.

We illustrate a blue shearing region of length 6 in \Cref{fig:veeringsolidtorus}.

\begin{figure} 
    \centering
    \resizebox{!}{3.5cm}{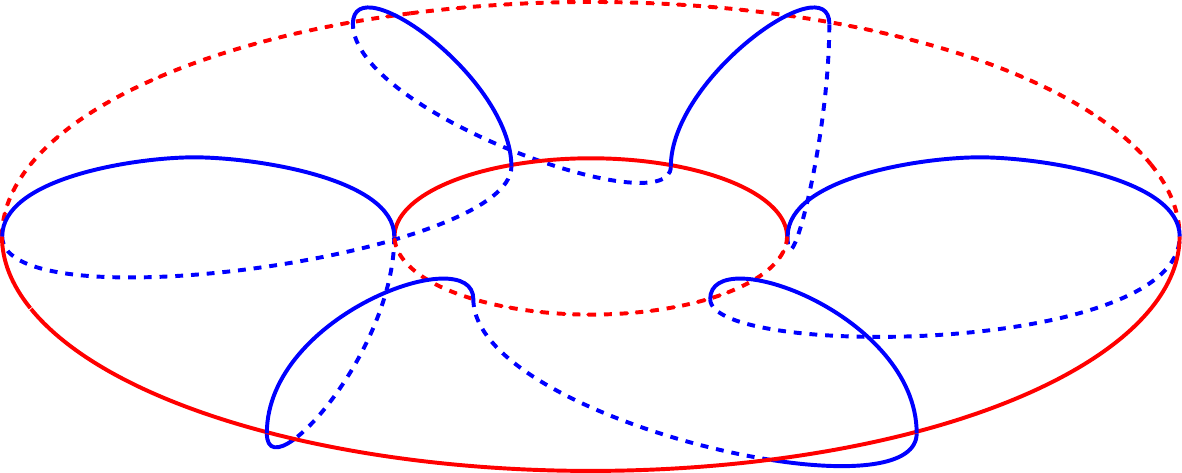}
    \caption{A blue shearing region of length 6.} 
    \label{fig:veeringsolidtorus}
\end{figure}

We denote the upper boundary of a shearing region $U$ by $\partial^U$. The closure of an upper boundary in the closed 3-manifold $M$ is a broken transverse surface with horizontal boundary along the edges of $\Delta$ and vertical boundary along $\mathcal{C}$, at least for generic placements of the edges. We will abuse notation and refer to the closure of an upper boundary by the same name as the upper boundary itself.

Since each shearing region has a product structure induced by the orbits of $\phi$, it is clear that each orbit of $\phi$ intersects $Q$ in finite forward and backward time. In other words, we have the following lemma.

\begin{lemma} \label{lemma:upperannuluscatchesall}
Let $E=\bigcup_U \partial^U$, where the union is taken over all shearing regions $U$. Each orbit of $\phi$ intersects $U$ in finite forward and backward time.
\end{lemma}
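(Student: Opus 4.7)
The plan is to exploit the fact that each shearing region $U$ is essentially a flow box for $\phi$. Because each square $q_t$ is by hypothesis transverse to $\phi$ and cooriented upward, the flow passes monotonically from the lower half-tetrahedron into the upper half-tetrahedron across $q_t$ inside each $t$. Assembling these local pieces, orbits enter $U$ through its lower boundary (those $q_t$ whose upper half-tetrahedron lies in $U$) and exit through its upper boundary $\partial^U$ (those $q_t$ whose lower half-tetrahedron lies in $U$), traversing a sequence of half-tetrahedra glued along internal triangular faces. (I read the $U$ in the statement as a typo for $E$.)

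The first step is to show that the transit time through $\overline{U}$ is uniformly bounded. Each closed half-tetrahedron embeds in $M$ as a compact set and the flow crosses it monotonically from one face to another, so the transit time through a single half-tetrahedron is finite by continuity of $\phi$ together with compactness. Moreover, within $U$ the flow is consistently upward with respect to the $q_t$-coorientation across the internal triangular faces, so an orbit visits each half-tetrahedron at most once; since $U$ contains only finitely many half-tetrahedra, the total transit time through $U$ is bounded.

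Given this, the argument splits into two cases. If an orbit is disjoint from $\bigcup \mathcal{C}$, then every point on it lies in the closure of some $U$; the forward orbit exits $U$ through $\partial^U \subset E$ in bounded time, and the backward orbit entered $U$ through its lower boundary, which coincides with the upper boundary of an adjacent shearing region $U'$ and hence is contained in $\partial^{U'} \subset E$. If instead the orbit is some $\gamma \in \mathcal{C}$, then the vertical boundary arcs of $\partial^U$, as $U$ ranges over the shearing regions having $\gamma$ as an ideal vertex, cover $\gamma$ up to finitely many open arcs of finite flow length, so any point on $\gamma$ reaches $E$ in bounded forward and backward time.

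The main obstacle will be rigorously establishing the uniform transit time through $\overline{U}$, which reduces to verifying that no orbit can cycle inside $\overline{U}$. This should follow from the consistency of the $q_t$-coorientation with the flow direction across the internal triangular faces of the shearing region, together with the fact that $U$ is built from only finitely many half-tetrahedra.
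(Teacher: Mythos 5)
You are right that the ``$U$'' in the statement should be ``$E$'', and your overall strategy --- each shearing region is a flow box, so every orbit must exit it through its upper boundary in finite forward time and must have entered it through a lower boundary (which is the upper boundary of the adjacent region) in finite backward time --- is exactly the argument the paper makes; the paper simply asserts that each shearing region carries a product structure induced by the orbits of $\phi$ and stops there.

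However, the step where you establish finite (indeed uniformly bounded) transit time does not work as written. The tetrahedra are ideal: the closure of a half-tetrahedron in $M$ is compact, but it contains the entire closed orbits of $\mathcal{C}$ sitting at its ideal vertices, and these orbits are flow-invariant, so the flow does not ``cross the closed half-tetrahedron monotonically from one face to another'' --- the exit-time function is not defined (or not continuous) at the added points, and continuity plus compactness therefore gives no bound. The finiteness you actually need comes from the construction in \Cref{sec:LMT}: each tetrahedron of $\widetilde{\Delta}$ is the union of the flow segments of $\widehat{\phi}$ lying over the open region of its tetrahedron rectangle bounded by the four outer veering diagonals, cut off below by its two bottom faces and above by its two top faces, so every point of the open tetrahedron reaches the top faces in finite forward time. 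This yields the product structure of each shearing region and hence finite forward and backward time to $E$ for every orbit in $M^\circ$; no uniform bound is needed, and near the orbits of $\mathcal{C}$ a uniform bound is at best delicate to justify. Your separate treatment of the orbits in $\mathcal{C}$ is reasonable (and more explicit than the paper, which elides this case), provided you note that the closure of $E$ in $M$ contains nondegenerate arcs of each $\gamma \in \mathcal{C}$, so every point of $\gamma$ flows into $E$ in finite time in both directions.
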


Notice that if we orient the boundary edges in the upper boundary of a blue shearing region using the (co)orientation on the squares, then such a collection of red edges is admissible in the sense of \Cref{defn:admissible}. We also have the symmetric fact for red shearing regions. We will make use of both of these facts to show \Cref{thm:closedBirkhoffsection}. 

However, for \Cref{thm:cuspedBirkhoffsection} we want to just deal with edges of one color, so we have to work harder. Let $U$ be a red shearing region. Let $f$ be a triangular face in the interior of $U$, let $e$ be the blue edge of $f$. $e$ is the upper helical edge of some blue shearing region $U'$. Let $q'$ be the upper square face of $U'$ that has an edge on $e$ and lies in the same side of $e$ as $f$. $q'$ must be the lower square face of a red shearing region $U''$. Let $e'$ be the other blue edge of $q'$, and let $f''$ be the triangular face in $U''$ that has its blue edge along $e'$. Then the upper boundary of $U'$ with $q'$ removed but with $f$ and $f''$ added is a surface transverse to orbits of $\phi$ which contains $f$ and whose boundary consists solely of red edges. We call this surface the \textit{croissant} with tip $f$ and denote it by $G_f$. See \Cref{fig:croissant}. 

\begin{figure} 
    \centering
    \resizebox{!}{3.3cm}{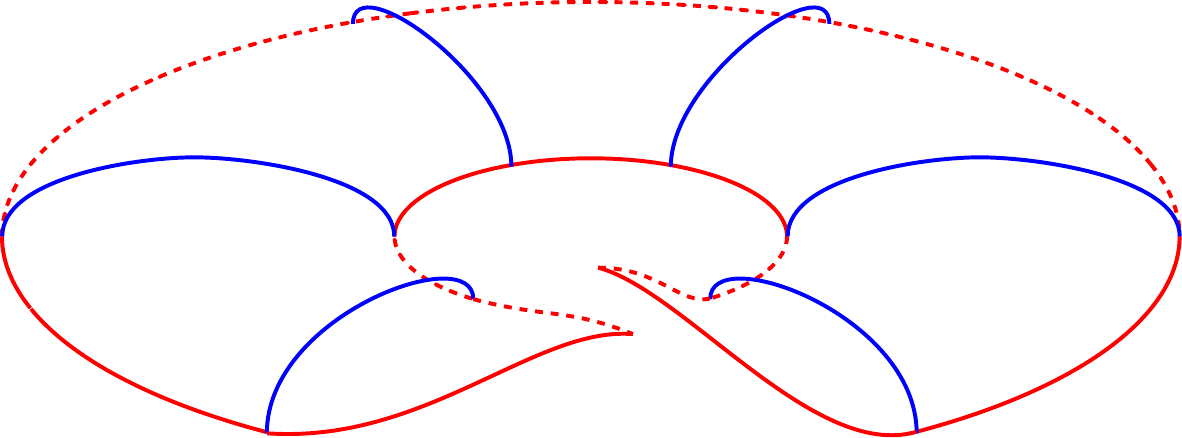}
    \caption{A croissant.} 
    \label{fig:croissant}
\end{figure}

Similarly to the case of upper boundaries, if we orient the edges in the boundary of a croissant using the (co)orientation on the squares and triangles, then such a collection of red edges is admissible in the sense of \Cref{defn:admissible}. Also, the closure of a croissant in the closed 3-manifold $M$ is a broken transverse surface with horizontal boundary along the edges of $\Delta$ and vertical boundary along $\mathcal{C}$, at least for generic placements of the edges. We will abuse notation and refer to the closure of a croissant by the same name as the croissant itself.

\begin{lemma} \label{lemma:croissantscatchall}
Let $E_R=\bigcup_U \partial^U \cup \bigcup_f G_f$, where the first union is taken over all blue shearing regions $U$ and the second union is taken over all triangular faces $f$ in red shearing regions. Each orbit of $\phi$ intersects $E_R$ in finite forward and backward time.
\end{lemma}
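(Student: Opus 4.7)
The plan is to reduce the claim to \Cref{lemma:upperannuluscatchesall} by showing that whenever an orbit traverses a red shearing region $U$, it crosses at least one of the $2l$ interior triangular faces of $U$, which in turn is contained in a croissant.

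The key geometric observation is that the $2l$ interior triangular faces of a shearing region $U$ separate the upper boundary $\partial^U$ from the lower boundary. Each upper half-tetrahedron of $U$ sits in the bottom stratum of $U$ (its square face being the equator of a tetrahedron whose upper half is in $U$, and hence a lower square face of $U$), while each lower half-tetrahedron of $U$ sits in the top stratum. Moreover, the coorientation data of the taut structure implies that each interior triangular face of $U$ glues an upper half-tet of $U$ to a lower half-tet of $U$: if $f$ is a triangular face of $\Delta$ with coorientation pointing from tetrahedron $t_1$ to tetrahedron $t_2$, then $f$ is an upper face of $t_1$ (contains its top edge) and a lower face of $t_2$ (contains its bottom edge), so it is a face of the upper half of $t_1$ and of the lower half of $t_2$. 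Consequently, an orbit entering $U$ through a lower square face (passing through an upper half-tet) and exiting through an upper square face (passing through a lower half-tet) must cross at least one interior triangular face in the transition.

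Given this, the rest of the argument is routine. Let $\phi_t(x)$ be an orbit with $x \notin \bigcup \mathcal{C}$; I show the orbit meets $E_R$ in finite positive time. By \Cref{lemma:upperannuluscatchesall}, there is a first $t_1 > 0$ at which the orbit crosses some $\partial^W$, and the orbit lay in $W$ throughout the preceding interval. If $W$ is blue, then $\phi_{t_1}(x) \in \partial^W \subset E_R$ and we are done. If $W$ is red, the observation above produces an interior triangular face $f$ of $W$ crossed at some time $t_f < t_1$; if $t_f > 0$ then $\phi_{t_f}(x) \in G_f \subset E_R$. In the remaining case $t_f \leq 0$, the same reasoning applied to the next shearing region $W'$, which the orbit enters at time $t_1 > 0$, guarantees an intersection with $E_R$ at some time in $(t_1,\infty)$. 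Finite backward time is handled by reversing the flow.

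The main step requiring careful verification will be the separation claim for interior triangular faces of shearing regions. I expect this to follow directly from the definitions in \cite{SS22a} and the taut structure along the lines sketched above, with a small amount of bookkeeping to rule out gluings of the form upper-to-upper or lower-to-lower among interior faces. A minor additional subtlety is that a measure-zero set of orbits might pass through edges of $\Delta$ rather than through the interiors of faces, but these can be handled by a small perturbation of the chosen squares $q_t$, or by a direct limiting argument using nearby generic orbits.
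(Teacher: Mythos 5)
Your proof is correct and takes essentially the same route as the paper's: the paper's (much terser) argument observes that each upper square face of a red shearing region flows backwards into the two triangular faces of the lower half-tetrahedron beneath it, which is the local form of your separation claim, and then reduces to \Cref{lemma:upperannuluscatchesall} and the fact that $f \subset G_f$ exactly as you do. Your explicit bookkeeping of the crossing times (and of the measure-zero orbits through edges) fills in details the paper glosses over, but the underlying idea is identical.
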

\begin{proof}
A square in the upper boundary of a red shearing region can be flowed backwards into the union of two triangular faces in the shearing region. This implies that each orbit of $\phi$ intersects the union of all upper boundaries of blue shearing regions and triangular faces of red shearing regions in finite forward and backward time. Thus the lemma follows from the fact that $G_f$ contains $f$.
\end{proof}

\subsection{Birkhoff sections on the cusped manifold} \label{subsec:cusped}

In this subsection we prove \Cref{thm:cuspedBirkhoffsection}. This is a good warm-up to the proof of \Cref{thm:closedBirkhoffsection}. As we will discuss in \Cref{subsec:vtquestions}, \Cref{thm:cuspedBirkhoffsection} also has some relevance in the theory of veering triangulations. 

Recall the notation for the parameters of a veering triangulation as set up in \Cref{subsec:vt}. We will employ the rough bounds $\delta, \lambda \leq 2N$ and $\nu \leq N$ to obtain bounds that only depend on the number of tetrahedra $N$, as far as the veering triangulation is concerned.

\begin{thm} \label{thm:cuspedBirkhoffsection}
Let $\phi$ be a pseudo-Anosov flow on an oriented closed 3-manifold $M$ without perfect fits relative to a collection of closed orbits $\mathcal{C}$. Let $\Delta$ be the veering triangulation associated to $\phi$ on $M \backslash \bigcup \mathcal{C}$. Suppose $\Delta$ has $N$ tetrahedra. 

Then there exists a closed orbit $\gamma$ of $\phi$ of complexity $\leq 10^6 N^{12}$ and a Birkhoff section $S$ with boundary along $\bigcup \mathcal{C} \cup \gamma$ and with Euler characteristic $\geq - 10^{10} N^{20}$.
\end{thm}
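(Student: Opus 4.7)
The plan is to build an immersed Birkhoff section by pairing the broken transverse surface $E_R$ of \Cref{lemma:croissantscatchall} with a single helicoidal broken transverse surface $H$, and then applying Fried resolution. First I will verify that the horizontal boundary of $E_R$, oriented from the coorientations of its square and triangular pieces, forms an admissible collection $A$ of red edges in the sense of \Cref{defn:admissible}. This follows from a $\partial^2=0$ argument: $\partial_h E_R$ is a 1-cycle on the red edges of $\Delta$ because the remaining boundary component $\partial_v E_R$ is a disjoint union of closed curves on the cusp tori, so the signed balance of in- and out-degrees at each vertex vanishes, which for a collection of same-colored oriented edges amounts to admissibility. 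A rough count gives $M=|A|=O(N)$.

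Next I apply \Cref{lemma:admissibletoedgeseq} with $\alpha_T=1$ to enlarge $A$ to a red edge sequence $(e_1,\dots,e_P)$ of period $P=O(N^3)$ (using the crude bound $\nu\le N$), then apply \Cref{lemma:edgeseqtoedgepath} with the minimum choice $\beta_i=4$ to lift this to a nice red $g$-edge path. This produces the desired orbit $\gamma$, and using the rough bounds $\lambda,\delta\le 2N$ its flow graph complexity is
\[
\mathfrak{c}_{\mathcal{C}}(\gamma) \;\le\; 2(8\lambda+3)^2\delta^2 P^2 \;=\; O(N^{10}),
\]
comfortably within the stated $10^6 N^{12}$. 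Applying \Cref{constr:helicoid} then yields the helicoid $H$ with $\partial_h H = (e_i)$ and $\partial_v H$ lying on $\gamma$.

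To assemble the Birkhoff section, I glue $E_R \sqcup (-H)$ along horizontal boundaries: each edge of $A$ in $\partial_h E_R$ cancels with its reversed copy in $\partial_h(-H)$, while each auxiliary pair $\{d'_j,-d'_j\}$ produced by \Cref{lemma:admissibletoedgeseq} cancels internally inside $\partial_h(-H)$. Since $E_R$ already meets every orbit in finite forward and backward time, the resulting surface $\hat S$ is an immersed Birkhoff section whose vertical boundary lies on $\bigcup\mathcal{C}\cup\gamma$. Applying \Cref{constr:Friedresolution} to $\hat S$ then yields the honest Birkhoff section $S$; since $H$ winds nontrivially around $\gamma$, the boundary component along $\gamma$ is preserved by the resolution.

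The hard part will be the quantitative control on $\chi(S)$. I will use the post-resolution inequality $\mathfrak{c}(S) \le -\ind(E_R\sqcup(-H)) + \sum_i \langle\hat S,\gamma_i\rangle$, summing over $\gamma_i\in\mathcal{C}\cup\{\gamma\}$. The index term is $O(N+P)=O(N^3)$, and the intersection counts are governed by \Cref{lemma:helicoidintersection} (bounding how $H$ meets each $\gamma_i\in\mathcal{C}$ via $\mathfrak{c}_{\mathcal{C}}(\gamma_i)$ and $P$) together with ladderpole estimates controlling how $E_R$ wraps around the orbits of $\mathcal{C}$ and how $H$ wraps around $\gamma$. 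None of these estimates is individually delicate, but coordinating them so that the final bound fits within $|\chi(S)|\le 10^{10} N^{20}$ requires careful tracking of corners, turning points, and double curves through the resolution, and that bookkeeping is where the bulk of the proof's technical effort will go.
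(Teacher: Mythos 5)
Your proposal follows the paper's proof of \Cref{thm:cuspedBirkhoffsection} essentially step for step (take $E_R$ from \Cref{lemma:croissantscatchall}, feed its admissible horizontal boundary through \Cref{lemma:admissibletoedgeseq} and \Cref{lemma:edgeseqtoedgepath}, build one helicoid, glue, Fried-resolve), but there are two concrete problems. First, \Cref{constr:helicoid} takes as input a \emph{winding} edge path, whereas \Cref{lemma:edgeseqtoedgepath} only produces a \emph{nice} one; you pass directly from the latter to the former. The missing link is \Cref{prop:windingedgepath}: one must go back and re-run the Landry--Minsky--Taylor construction, rechoosing the veering diagonals (hence the triangulation itself) so that the given nice edge path becomes winding. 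This is not a formality --- it is the main technical point of Section 3 --- and without it the helicoid does not exist.

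Second, your count $|A|=O(N)$ is an undercount. The collection $E_R$ is not just the upper boundaries of the blue shearing regions: it also contains one croissant $G_f$ for each of the up to $2N$ triangular faces $f$ in red shearing regions, and each croissant is essentially a full upper boundary of a shearing region, contributing up to $2N+2$ horizontal sides. So $|A|=O(N^2)$ (the paper gets $\le 10N^2$), whence $P=O(N^4)$ rather than $O(N^3)$, and $\mathfrak{c}_{\mathcal{C}}(\gamma)=O(N^{12})$ rather than $O(N^{10})$. The theorem's stated bounds $10^6N^{12}$ and $10^{10}N^{20}$ still hold after the correction (the constants are chosen with room to spare), but your ``comfortable margin'' is illusory and the subsequent index and intersection estimates inherit the same error. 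A smaller point: rather than gluing $E_R\sqcup(-H)$ (reversing the coorientation of $H$ would make it negatively transverse), apply \Cref{lemma:admissibletoedgeseq} to the \emph{negatives} of the edges of $A$, so that $\partial_h H$ already contains the reversed edges and glues to $\partial_h E_R$ with both pieces positively transverse.
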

\begin{proof}
Consider the surfaces in $E_R$ in \Cref{lemma:croissantscatchall}. The collection of sides in the horizontal boundary of the surfaces, which we denote by $\mathcal{D}$, is a collection of red oriented edges which is admissible. We bound the size of $\mathcal{D}$. The union $\bigcup_U \partial^U$ involves at most $N$ squares, hence contributes at most $2N$ edges. Each $G_f$ consists of 2 triangles and at most $N-1$ squares, hence contributes at most $2N+2$ edges, and there are at most $2N$ many $f$. Putting all this together, we deduce that $|\mathcal{D}| \leq 2N+2N(2N+2) \leq 10N^2$. 

Now we can apply \Cref{lemma:admissibletoedgeseq} to the negative of $\mathcal{D}$, with $\alpha_T$ chosen to be 1 for all $T$, to get a red edge sequence $\mathcal{E}$ of period $\leq (\frac{\nu}{2} \cdot 10 N^2 + 1 + 1)N + 10 N^2 \leq 17N^4$. Then we apply \Cref{lemma:edgeseqtoedgepath} to $\mathcal{E}$, with $\beta_i$ chosen to be 4 for all $i$, to get a nice red $g$-edge path $(R_i)$. Here $g$ quotients an orbit of $\widehat{\phi}$ to a closed orbit $\gamma$ of complexity $\leq 2(8\lambda+3)^2 \delta^2 (17N^4)^2 \leq 834632 N^{12} =: \mathfrak{c}$. 

Once we have this $\gamma$, we apply \Cref{prop:windingedgepath} to rechoose the triangulation so that $(R_i)$ is winding. Then we apply \Cref{constr:helicoid} to get a broken transverse surface $H(R_i)$ with boundary along $\gamma$ and $\mathcal{E}$. 

Form an immersed Birkhoff section $S'$ by taking the union of $H(R_i)$ and $E_R$. That every orbit intersects $S'$ in finite forward and backward time follows from \Cref{lemma:croissantscatchall}. We bound the complexity of $S'$. The index of $H(R_i)$ is $\geq -\frac{17}{2} N^4$. The surfaces in $E_R$ consist of $N$ squares in the upper boundaries, and $2N(N-1)$ squares and $4N$ triangles in the croissants. Each square has index $-1$ and each triangle has index $-\frac{1}{2}$, so the sum of indices of surfaces in $E_R$ is $\geq -N-2N(N-1)-2N \geq -3N^2$. 

Meanwhile $\gamma$ intersects each square and each triangle at most $2\delta^2 \mathfrak{c}$ times, by \Cref{lemma:complexityrectbound} and \Cref{lemma:complexityfacebound}. So $\gamma$ intersects the surfaces in $E_R$ for $\leq 2N\delta^2 \mathfrak{c} +4N(N-1) \delta^2 \mathfrak{c} + 8N \delta^2 \mathfrak{c} \leq 40N^4 \mathfrak{c}$ times. By \Cref{lemma:helicoidintersection}, $\gamma$ intersects $H(R_i)$ (away from its boundary) for $\leq 32\delta^4 \mathfrak{c} \cdot 17N^4 \leq 8704N^8 \mathfrak{c}$ times. Putting everything together, the complexity of $S'$ is bounded above by $\frac{17}{2}N^3+3N^2 + 40 N^4 \mathfrak{c} + 8704 N^8 \mathfrak{c} \leq 10^{10} N^{20}$. 

Finally, we apply Fried resolution to get a Birkhoff section $S$ as in the statement of the theorem.
\end{proof}

\subsection{Birkhoff sections on the closed manifold} \label{subsec:closed}

In this subsection we finally come to the proof of \Cref{thm:closedBirkhoffsection}.

\begin{thm} \label{thm:closedBirkhoffsection}
Let $\phi$ be a pseudo-Anosov flow on an oriented closed 3-manifold $M$ without perfect fits relative to a collection of closed orbits $\mathcal{C}$. Let $\Delta$ be the veering triangulation associated to $\phi$ on $M \backslash \bigcup \mathcal{C}$. Suppose $\Delta$ has $N$ tetrahedra. For each vertex $T$ of $\Delta$, let $l_T$ be a ladderpole curve and let $t_T$ be a ladderpole transversal at $T$. Let the meridian of $M$ at $T$ be $a_T t_T + b_T l_T$, for $a_T>0$.

Then there exists two closed orbit $\gamma_1$ and $\gamma_2$ of $\phi$, each of complexity $\leq 10^{10} N^{20} (\max a_T + \max |b_T|)^2 (\max a_T)^2$, and a Birkhoff section $S$ with two boundary components, one embedded along $\gamma_1$ and one embedded along $\gamma_2$, with Euler characteristic $\geq - 10^{13} N^{27} (\max a_T + \max |b_T|)^2 (\max a_T)^3$.
\end{thm}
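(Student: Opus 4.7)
The plan is to adapt the proof of \Cref{thm:cuspedBirkhoffsection} by treating both colors symmetrically and choosing the combinatorial parameters so that, after assembly, Fried resolution collapses every boundary component of the immersed Birkhoff section that lies on an orbit of $\mathcal{C}$. First I would take the full collection $E=\bigcup_U \partial^U$ of upper boundaries from the shearing decomposition; by \Cref{lemma:upperannuluscatchesall} every orbit meets $E$ in finite forward and backward time. The horizontal boundary of $E$ splits into two admissible collections (\Cref{defn:admissible}): a collection $\mathcal{D}_R$ of red oriented edges coming from upper boundaries of blue shearing regions, and a collection $\mathcal{D}_B$ of blue oriented edges coming from upper boundaries of red shearing regions, each of size at most $2N$.

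For each vertex $T$ of $\Delta$ I would then compute the homology class $\xi_T^E = x_T^E t_T + y_T^E l_T \in H_1(\partial N(\gamma_T))$ contributed by $E$ near $\gamma_T$, and analyze how a helicoid contributes when its supporting edge sequence passes through $T$. Each passage contributes fixed pieces in both the $t_T$- and $l_T$-directions, but the free parameter $\beta_i$ from \Cref{lemma:edgeseqtoedgepath} lets me adjust the $t_T$-coefficient independently at each passage, while the crossing count $\alpha_T$ from \Cref{lemma:admissibletoedgeseq}, together with the number of passages through $T$ (which I can inflate by padding the admissible collection with extra copies of all red or all blue edges), controls the $l_T$-coefficient. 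My goal is to arrange $\xi_T^E + \xi_T^R + \xi_T^B = k_T(a_T t_T + b_T l_T)$ for some integer $k_T$ at every vertex $T$ simultaneously for both colors.

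Having fixed these parameters, I would apply \Cref{lemma:admissibletoedgeseq} to $-\mathcal{D}_R$ and $-\mathcal{D}_B$ with $\alpha_T^R, \alpha_T^B$ of order $\max(a_T,|b_T|)$ (the order needed to obtain the required divisibility of the $l_T$-component by $b_T$), yielding edge sequences of period at most a constant times $\nu N (\max a_T + \max |b_T|)$. I would then lift each to a nice edge path via \Cref{lemma:edgeseqtoedgepath}, choosing each $\beta_i$ of order $\max a_T$ so that the cumulative $t_T$-coefficient comes out as $k_T a_T$, and, via \Cref{lemma:edgepathprimitive}, so that the two resulting orbits $\gamma_1,\gamma_2$ are primitive. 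After rechoosing the triangulation via \Cref{prop:windingedgepath} to make both edge paths winding, I would construct the two helicoids $H(R^R),H(R^B)$ by \Cref{constr:helicoid} and assemble the immersed Birkhoff section $S' = E \cup H(R^R) \cup H(R^B)$. By construction $S'\cap \partial N(\gamma_T)$ is a multiple of the meridian for every $T$, so Fried resolution (\Cref{constr:Friedresolution}) collapses all of those boundary components, leaving only the two vertical boundary components from the helicoids, embedded along $\gamma_1$ and $\gamma_2$ respectively.

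The complexity bounds follow by direct bookkeeping along the lines of the proof of \Cref{thm:cuspedBirkhoffsection}: plugging the edge-sequence period estimate into \Cref{lemma:edgeseqtoedgepath} yields flow graph complexity at most $10^{10}N^{20}(\max a_T + \max |b_T|)^2(\max a_T)^2$ for each $\gamma_i$, and summing the indices of the squares in $E$ together with the intersection counts from \Cref{lemma:helicoidintersection} and \Cref{lemma:complexityrectbound} gives the stated $10^{13}N^{27}(\max a_T + \max |b_T|)^2(\max a_T)^3$ Euler characteristic bound. The main obstacle I expect is the homological bookkeeping of the second and third paragraphs: at every vertex $T$ one must solve the linear relation $\xi_T^E + \xi_T^R + \xi_T^B = k_T(a_T t_T + b_T l_T)$ simultaneously while keeping $\alpha_T^R,\alpha_T^B$ and every $\beta_i$ within the claimed polynomial bounds, and doing so in a coupled way for the red and blue sequences, since changing $\alpha_T$ at a single vertex changes the period of the whole edge sequence and hence the passage counts at every other vertex.
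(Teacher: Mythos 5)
Your proposal follows essentially the same route as the paper: both colors of shearing-region upper boundaries give two admissible collections, two edge sequences, two helicoids whose boundary homology classes at each vertex of $\mathcal{C}$ are tuned (via the crossing counts $\alpha_T$ and the winding integers $\beta_i$) so that the total is a multiple of the meridian, after which Fried resolution collapses all boundary on $\mathcal{C}$ and leaves the two embedded components. The only slip is that you have the roles of the two parameters reversed — $\alpha_T$ (ladderpole-curve crossings) controls the $t_T$-coefficient and $\beta_i$ (winding past the transversals) controls the $l_T$-coefficient — but the decoupling you rely on is exactly how the paper resolves the "coupled bookkeeping" you flag as the main obstacle, by fixing the $t_T$-coefficient of each piece to $3Na_T$ and then adjusting the $l_T$-coefficients independently.
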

\begin{proof}
Consider the collection $E_L$ of upper boundaries of blue shearing regions. The collection of sides in the horizontal boundary of the annuli in the collection, which we denote by $\mathcal{D}_R$, is a collection of red oriented edges which is admissible. There are at most $N$ squares in the upper boundaries, so there are at most $2N$ elements in $\mathcal{D}_R$. 

Notice that this implies that the number of sides in the vertical boundary of the surfaces is also at most $2N$. Each such side is formed by two squares meeting along a blue edge, hence they each cross a blue ladderpole curve once. Hence we conclude that the number of times the sides in the vertical boundaries cross a fixed blue ladderpole curve at vertex $T$ is some $y_T \in [0,2N]$. Here we choose the fixed blue ladderpole curve at vertex $T$ to be the one at which the minimum in \Cref{lemma:admissibletoedgeseq} is attained. By \Cref{rmk:admissibletoedgeseq}, this is well-defined since we have a fixed admissible collection $\mathcal{D}_R$.

Next we bound the number of times the sides in the vertical boundaries cross the ladderpole transversal at a vertex $T$. Again, each such side is formed by two squares meeting along a blue edge, hence each side crosses a ladderpole transversal for some $x \in [-2,2]$ times. Together, the total number of times the sides cross the ladderpole transversal at $T$ is some $x_T \in [-4N,4N]$ times.

Now we apply \Cref{lemma:admissibletoedgeseq} to the negative of $\mathcal{D}_R$, choosing $\alpha_T = 3N a_T - y_T$, to get a red edge sequence $\mathcal{E}_R$, of period $\leq (\frac{\nu}{2} \cdot 2N + 3N \max a_T + 1)N + 2N \leq 4N^3 + 3N^2 \max a_T \leq 7N^3 \max a_T$. 

Let $B_T = 511N^5 \max a_T + 6N|b_T| + 4N$ for each vertex $T$. Apply \Cref{lemma:edgeseqtoedgepath} to $\mathcal{E}$ to get a nice red $g_R$-edge path $(R_{R,i})$, where we choose the integers $\beta_i$ in the lemma such that for each vertex $T$, all but one of the $\beta_i$ for which $T_i=T$ equals to $(16 \lambda +4)\delta+1$ and the sum of such $\beta_i$ equals $B_T+6N b_T-x_T$. This is possible since $7N^3 \max a_T \cdot ((16 \lambda +4)\delta+1) \leq 511 N^5 \max a_T \leq B_T+6Nb_T-x_T$ and there are at most $7N^3 \max a_T$ indices. Here $g_R$ is primitive, up to increasing one of the $\beta_i$ by 1 (hence increasing $B_T$ by 1) by \Cref{lemma:edgepathprimitive}, and $g_R$ quotients an orbit of $\widehat{\phi}$ to a closed orbit $\gamma_R$ of complexity 
\begin{align*}
    &\leq 2((\max (B_T+6Nb_T-x_T)+1)\lambda +3)^2 \delta^2 (7N^3 \max a_T)^2 \\
    &\leq 2(511 N^5 \max a_T +12N \max |b_T| + 8N+1)\lambda +3)^2 \delta^2 (7N^3 \max a_T)^2 \\
    &\leq 2(1043N^6 \max a_T + 24N^2 \max |b_T|)^2 (2N)^2 (7N^3 \max a_T)^2 \\
    &\leq 426436808 N^{20} (\max a_T + \max |b_T|)^2 (\max a_T)^2 =: \mathfrak{c}
\end{align*}

Symmetrically, we look at the collection $E_R$ of upper boundaries of red shearing regions and let the collection of sides in the horizontal boundary of the annuli in the collection be $\mathcal{D}_L$. Then we apply \Cref{lemma:admissibletoedgeseq} with the analogous choice of $\alpha_T$ to get a blue edge sequence $\mathcal{E}_L$, then \Cref{lemma:edgeseqtoedgepath} to get a nice blue $g_L$-edge path $(R_{L,i})$, but this time we choose the integers $\beta_i$ in the lemma such that for each vertex $T$, all but one of the $\beta_i$ for which $T_i=T$ equals to $(16 \lambda +4)\delta+1$ and the sum of such $\beta_i$ equals $B_T$; such a choice is possible since $7N^3 \max a_T \cdot ((16 \lambda +4)\delta+1) \leq 511 N^5 \max a_T \leq B_T$. Here $g_L$ is primitive, up to increasing one of the $\beta_i$ by 1 (hence increasing $B_T$ by 1) by \Cref{lemma:edgepathprimitive} and $g_L$ quotients an orbit of $\widehat{\phi}$ to a closed orbit $\gamma_R$ of complexity $\leq \mathfrak{c}$ by a similar computation as above.

Once we have $\gamma_R$ and $\gamma_L$, we apply \Cref{prop:windingedgepath} to rechoose the triangulation so that $(R_{R,i})$ and $(R_{L,i})$ are winding. Then we apply \Cref{constr:helicoid} to get broken transverse surface $H(R_{R,i})$ with boundary along $\gamma_R$ and $\mathcal{E}_R$ and broken transverse surface $H(R_{L,i})$ with boundary along $\gamma_L$ and $\mathcal{E}_L$. 

Form a surface $S_R$ transverse to the flow by taking the union of $H(R_{R,i})$ and surfaces in $E_L$, and form a surface $S_L$ transverse to the flow by taking the union of $H(R_{L,i})$ and surfaces in $E_R$. The homology class of $\partial S_R$ on vertex $T$ is $3Na_T t_T+ (B_T+6Nb_T) l_T$, while the homology class of $\partial S_L$ on vertex $T$ is $3Na_T t_T-B_T l_T$, so together they add up to $6N(a_T t_T + b_T l_T)$, which is a multiple of the meridian.

Note that $S_R \cup S_L$ is an immersed Birkhoff section, since every orbit intersects it in finite forward and backward time by \Cref{lemma:upperannuluscatchesall}. We bound the complexity of $S_R \cup S_L$. The index of $H(R_{R,i})$ and $H(R_{L,i})$ are each $\geq -\frac{7}{2}N^3 \max a_T$. The surfaces in $E_L$ and $E_R$ consist of $N$ squares, so the sum of indices of surfaces in $E_L$ and $E_R$ is $-N$.

$\gamma_R$ intersects all of the squares at most $2\delta^2 \mathfrak{c}$ times by \Cref{lemma:complexityrectbound}. So $\gamma_R$ intersects the surfaces in $E_L$ and $E_R$ for $\leq 2\delta^2 \mathfrak{c} \leq 8N^2 \mathfrak{c}$ times. Meanwhile by \Cref{lemma:helicoidintersection}, $\gamma_R$ intersects $H(R_{R,i})$ (away from its boundary) at most $32\delta^4 \mathfrak{c} \cdot 7N^3 \max a_T \leq 3584 N^7 \mathfrak{c} \max a_T$ times and intersects $H(R_{L,i})$ at most $3584 N^7 \mathfrak{c} \max a_T$ times similarly. Symmetric statements hold for $\gamma_L$. Putting everything together, the complexity of $S_R \cup S_L$ is 
\begin{align*}
    &\leq \frac{7}{2}N^3 \max a_T + \frac{7}{2}N^3 \max a_T + N + 2(8N^2 \mathfrak{c} + 3584 N^7 \mathfrak{c} \max a_T + 3584 N^7 \mathfrak{c} \max a_T) \\
    &\leq 8N^3 \max a_T + 14352 N^7 \mathfrak{c} \max a_T \\
    &\leq 10^{13} N^{27} (\max a_T + \max |b_T|)^2 (\max a_T)^3 \\
\end{align*}

Finally, we apply Fried resolution to get a Birkhoff section $S$. Since the homology classes of the boundary components of $S_R \cup S_L$ along each element of $\mathcal{C}$ add up to a multiple of the meridian, after we resolve the turning points in the last step of Fried resolution, the resulting surface $S$ only has boundary components along $\gamma_R$ and $\gamma_L$. 

By the discussion in \Cref{constr:helicoid}, the boundary component of $H(R_{R,i})$ along $\gamma_R$ is embedded and similarly for $H(R_{L,i})$. Thus after applying Fried resolution, $S$ has one boundary component embedded along $\gamma_R$ and one boundary component embedded along $\gamma_L$.

Technically, one has to worry about the possibility that $\gamma_R=\gamma_L=:\gamma$. In that case $S_R \cap S_L$ has two boundary components along $\gamma$ whose homology class adds up to a multiple of the meridian in $M$, hence after Fried resolution $S$ will in fact be a global section for $\phi$, which is even better. But if one still wants to show the statement of the theorem in this case, one can just modify the choice of $\beta_i$ slightly for, say, $\gamma_R$, which will make $[\gamma_R] \neq [\gamma_L]$ in $\pi_1(M)$ hence ensure that they are distinct orbits by \Cref{lemma:nohomotopicorbits}. 
\end{proof}

\section{Discussion and further questions} \label{sec:questions}

\subsection{Birkhoff sections with one boundary component} \label{subsec:oneboundarycomponent}

As remarked in the introduction, one cannot in general find a Birkhoff section with only one boundary component. However, one can ask:

\begin{quest} \label{quest:oneboundarycomponent}
How can one characterize the orbit equivalence class of pseudo-Anosov flows which have a Birkhoff section with only one boundary component?
\end{quest}

Here we make some speculations as to what an answer to this might look like.



Recall that in the case of Anosov flows, one answer to this question has been provided by Marty.

\begin{thm}[{\cite[Theorem G]{Mar21}, \cite[Theorem E]{Mar23}}] \label{thm:Mar}
An Anosov flow admits a Birkhoff section with only one boundary component if and only if it is skew $\mathbb{R}$-covered.
\end{thm}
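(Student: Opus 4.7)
The plan is to prove each direction separately. For the implication that skew $\mathbb{R}$-coveredness yields a Birkhoff section with one boundary component, I would invoke Fenley's structure theorem to obtain a $\pi_1(M)$-equivariant \emph{slithering} $\sigma : \widetilde{M} \to \mathbb{R}$, i.e.\ a submersion whose level sets are planes transverse to $\widetilde{\phi}$, with the deck group acting on the image as a cocompact group of translations. I would then pick a closed orbit $\gamma$ (furnished by the shadowing lemma applied to the slithering structure) and consider the region in $\widetilde{M}$ bounded between two translates of a level set through a lift $\widetilde{\gamma}$. After tiling $\widetilde{M}$ by such regions, one descends to a broken transverse surface in $M$ whose vertical boundary is only $\gamma$; gluing along the horizontal boundary via the machinery of \Cref{subsec:Birkhoffsection} produces the desired Birkhoff section.

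For the converse, suppose $\phi$ admits a Birkhoff section $S$ with a single boundary component along a closed orbit $\gamma$. Let $\widetilde{S} \subset \widetilde{M}$ be a lift; it is a connected plane transverse to $\widetilde{\phi}$ whose $\pi_1(M)$-translates tile $\widetilde{M} \setminus \widetilde{\{\gamma\}}$. I would define an integer-valued intersection count $h : \widetilde{M} \setminus \widetilde{\{\gamma\}} \to \mathbb{Z}$ by counting signed intersections of a forward flow ray with $\widetilde{S}$. The single-boundary hypothesis forces $h$ to be $\pi_1(M)$-equivariant and strictly monotonic along each orbit of $\widetilde{\phi}$, so it descends to a coherent parametrization of the leaf spaces of $\mathcal{O}^s$ and $\mathcal{O}^u$ by $\mathbb{Z} \subset \mathbb{R}$. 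A non-$\mathbb{R}$-covered flow would contain two non-separated leaves in one of the leaf spaces, on which $h$ would be forced both to agree (by continuity along a common transversal) and to differ (by the non-separation), giving a contradiction. To rule out the trivial $\mathbb{R}$-covered (suspension) case, I would observe that a single-boundary Birkhoff section cannot be a global fiber of a suspension, since the boundary orbit $\gamma$ would have to be simultaneously a meridional loop in the fibration and transverse to the fibers.

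The main obstacle is the converse direction, specifically the step of upgrading the intersection-counting function $h$ into a continuous $\pi_1(M)$-equivariant parametrization of the leaf spaces. One must verify that $h$ factors through each leaf of the weak stable and weak unstable foliations, which requires understanding how the first-return map of $\widetilde{S}$ interacts with the invariant line bundles near $\widetilde{\gamma}$. This hinges on the Anosov contraction/expansion property together with Fenley's classification of lozenge chains and non-separated leaves in the orbit space, and any rigorous implementation (as in Marty's original proof) will require careful bookkeeping of how the boundary multiplicity of $S$ along $\gamma$ controls the global topology of $\mathcal{O}$.
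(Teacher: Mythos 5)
First, note that the paper does not prove this statement at all: it is quoted verbatim from Marty (\cite[Theorem G]{Mar21}, \cite[Theorem E]{Mar23}) and used as a black box to assemble the trichotomy table in the introduction. So there is no in-paper argument to compare yours against; I can only assess your sketch on its own terms, and both directions have genuine gaps.

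The forward direction as written cannot work. A $\pi_1(M)$-equivariant submersion $\sigma:\widetilde{M}\to\mathbb{R}$ whose level sets are \emph{transverse} to $\widetilde{\phi}$ and on whose image the deck group acts cocompactly by translations would descend to a fibration $M\to S^1$ transverse to the flow, i.e.\ a closed global cross-section; that would make $\phi$ a suspension, which is exactly the trivial $\mathbb{R}$-covered case and is incompatible with skewness. The slithering that Thurston--Fenley theory actually attaches to a skew $\mathbb{R}$-covered flow has level sets that are (uniformly close to) leaves of the weak stable foliation, hence \emph{tangent} to the flow direction, not transverse to it. Consequently the ``region between two translates of a level set through $\widetilde{\gamma}$'' is not a broken transverse surface in the sense of \Cref{defn:brokentransversesurface}, and the gluing machinery of \Cref{subsec:Birkhoffsection} has nothing to act on. Marty's construction in this direction is of a different nature (it exploits the lozenge chain / one-step-up structure of the skew orbit space and positivity of linking numbers to produce the spanning surface), and there is no shortcut through a transverse fibration.

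The converse also has a gap at its central step. A single lift $\widetilde{S}$ is not $\pi_1(M)$-invariant (the deck group permutes the lifts), so the signed count $h$ of intersections of a forward ray with $\widetilde{S}$ depends on the choice of lift, need not be finite or monotone, and is certainly not equivariant; more importantly, there is no reason for $h$ to be constant on weak stable or weak unstable leaves, so it does not ``descend to a parametrization of the leaf spaces,'' and the contradiction you want to extract from non-separated leaves never gets off the ground. The substantive route here is the one your last paragraph only gestures at: cutting $M$ along $S$ exhibits $\phi|_{M\setminus\gamma}$ as a suspension, hence $\phi$ is without perfect fits relative to $\{\gamma\}$ (compare \Cref{shorterproofofnoperfectfitexists}), so by \Cref{prop:perfectfitimplylozenge}-type arguments every lozenge in $\mathcal{O}$ must have a corner on a lift of the single orbit $\gamma$; one must then analyze which orbit-space configurations (product, skew, branching) are compatible with that constraint, and separately rule out the product case. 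That analysis is the actual content of Marty's theorem and is missing from the proposal.
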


Motivated by this, one can try to look for a generalization of the `skew $\mathbb{R}$-covered' condition to pseudo-Anosov flows, such that the statement of \Cref{thm:Mar} holds with such a generalization. A natural guess of such a generalization might be the condition that there are only lozenges of a fixed sign, where one defines the sign of a lozenge using the orientation on the orbit space, similar to \Cref{defn:edgerectcolor}.

Another good starting point might be the following weaker version of \Cref{quest:oneboundarycomponent}:

\begin{quest} \label{quest:positiveboundarycomponents}
How can one characterize the orbit equivalence class of pseudo-Anosov flows which have a Birkhoff section with all the boundary components of the same sign?
\end{quest}

We also point out that in the case when $M$ is a rational homology sphere, there is a notion of right-handed flows, introduced by Ghys in \cite{Ghy09}. The basic idea is that orbits of a right-handed flow have positive (asymptotic) linking number. In \cite{Ghy09}, Ghys proves that a right-handed flow admits a Birkhoff section with boundary along any chosen finite collection of closed orbits. In other words, for a fixed 3-manifold $M$ that is a rational homology sphere, if we let:

$\mathcal{B}^+_1$ be the orbit equivalence class of pseudo-Anosov flows on $M$ which have a Birkhoff section with only one positive boundary component,

$\mathcal{B}^+$ be the orbit equivalence class of pseudo-Anosov flows on $M$ which have a Birkhoff section with only positive boundary components,

$\mathcal{B}_{RH}$ be the orbit equivalence class of pseudo-Anosov flows on $M$ which are right-handed.

Then $\mathcal{B}_{RH} \subset \mathcal{B}^+_1 \subset \mathcal{B}^+$. 

\begin{quest} \label{quest:righthandedflows}
Is $\mathcal{B}_{RH} = \mathcal{B}^+$?
\end{quest}






\subsection{More on complexity} \label{subsec:complexityquestions}

The reader will notice that we handled the complexity bounds in this paper very loosely, in the sense that we employ very loose bounds on, for example, the parameters $\delta, \nu, \lambda$. One could conceivably work harder and improve the bounds in the statement of \Cref{thm:cuspedBirkhoffsection} and \Cref{thm:closedBirkhoffsection} by a bit. However, we do not believe that a bound obtained via our methods would be the sharpest possible bound in any case, so we have not bothered to be more tight with our bounding.

In this connection, one can ask:

\begin{quest}
What are the sharpest possible bounds in \Cref{thm:cuspedBirkhoffsection} and \Cref{thm:closedBirkhoffsection}, for both the Euler characteristic of the Birkhoff section and the flow graph complexity of the boundary orbits?
\end{quest}

This is likely a very difficult question. Perhaps a more tractable question would be:

\begin{quest}
What are the smallest possible exponents of $N$ in the bounds in \Cref{thm:cuspedBirkhoffsection} and \Cref{thm:closedBirkhoffsection}?
\end{quest}

The bounds would certainly have to be at least linear, since one can always take covers. The question concerns how complicated the flows that are not just covers of other flows can be as one increases $N$. 

A variation of this question would be to impose a constraint on the Euler characteristic of the Birkhoff section, and ask for the sharpest bound for the flow graph complexity of the boundary orbits among the Birkhoff sections that satisfy the constraint; and vice versa.

We would be remiss not to mention the following well-known conjecture of Fried.

\begin{conj}[Fried] \label{conj:Fried}
Any Anosov flow with orientable stable and unstable foliations admits a genus one Birkhoff section.
\end{conj}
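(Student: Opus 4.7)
The plan is to leverage \Cref{thm:closedBirkhoffsection} together with a refinement procedure targeted at the orientable Anosov setting. For a transitive orientable Anosov flow $\phi$, \Cref{prop:noperfectfitexists} gives a collection $\mathcal{C}$ relative to which $\phi$ has no perfect fits; since there are no singular orbits in the Anosov case, $\mathcal{C}$ can be taken to be a single closed orbit. Then \Cref{thm:closedBirkhoffsection} produces a Birkhoff section $S$ with two boundary components, each embedded along a closed orbit. Since a genus-one Birkhoff section with two boundary components has Euler characteristic $-2$, the task reduces to pushing $\chi(S)$ all the way down to $-2$.

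The first step I would take is to exploit the orientability hypothesis on the stable and unstable foliations to extract structural constraints on the associated veering triangulation. This hypothesis should force the coorientations along the ladderpoles at each vertex of $\Delta$ to be globally consistent, which should in turn allow the admissible collections $\mathcal{D}_R$ and $\mathcal{D}_L$ used in the proof of \Cref{thm:closedBirkhoffsection} to be chosen much smaller, and the edge sequences $\mathcal{E}_R$ and $\mathcal{E}_L$ to be made of small period, ideally linear rather than polynomial in $N$. Combined with picking the minimum number of upper boundaries of shearing regions needed to catch every orbit of $\phi$ (rather than taking all of them as in \Cref{lemma:upperannuluscatchesall}), one could then hope to arrange for the Fried resolution to collapse away most of the index excess and produce a Birkhoff section with $\chi$ approaching $-2$. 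For non-transitive orientable Anosov flows, one would first decompose the non-wandering set into basic pieces, treat each piece separately via its own veering triangulation, and finally assemble the pieces along the connecting annuli.

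The hard part, and the reason this conjecture has remained open for over forty years, is controlling the topology after Fried resolution. The bound in \Cref{lemma:Friedresolutioncomplexity} is only an inequality, and while the collapsing of null-homotopic boundary components in the last step of Fried resolution can significantly decrease complexity, the precise way in which the immersed surface self-intersects along its vertical boundary directly affects the genus of the resolved surface in an unpredictable way. A complete proof would likely require either an intrinsic surgery procedure that decreases the genus of an arbitrary Birkhoff section while preserving the Birkhoff property and keeping the surface embedded in its interior — effectively a minimal-genus representative theorem for flow-transverse surfaces — or a genuinely new construction specific to the orientable setting that outputs a genus-one section directly, bypassing the lossy broken-surface approach entirely. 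I expect the principal obstacle to be precisely ruling out higher-genus pieces in whatever minimal combinatorial object one extracts from the veering data; even for classical examples such as geodesic flows on hyperbolic surfaces, the existence of a genus-one Birkhoff section is already nontrivial, and in general the geometric and dynamical constraints available are much weaker.
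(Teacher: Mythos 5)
This statement is an open conjecture, not a theorem of the paper: the paper merely records it in \Cref{subsec:complexityquestions} and explicitly remarks that the sections constructed here are ``in a sense the opposite'' of what Fried asks for (large genus, few boundary components) and that ``new ideas would probably be required to push the genus \dots down to $1$, if it is even possible.'' Your proposal is therefore not, and does not really claim to be, a proof; it is a research plan, and to your credit you flag the decisive obstruction yourself. But several of its concrete steps would fail. First, the reduction ``genus one with two boundary components $\Leftrightarrow$ $\chi(S)=-2$'' quietly strengthens the conjecture: Fried asks only for genus one, with no bound on the number of boundary components, so insisting on two boundary components makes the target strictly harder and works against you --- the natural direction suggested by the paper is to allow \emph{many} boundary components in exchange for low genus. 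Second, the claim that orientability of the foliations forces the admissible collections and edge sequences to shrink enough that $\chi$ reaches $-2$ is unsupported: even with edge sequences of period linear in $N$, the index of the helicoids and of the shearing-region pieces, and the intersection counts with the boundary orbits, all grow with $N$, and \Cref{lemma:Friedresolutioncomplexity} is only a one-sided inequality with no mechanism to force the resolved surface down to a fixed Euler characteristic independent of $N$. Third, the proposed treatment of non-transitive Anosov flows is a non-starter: by \Cref{rmk:noperfectfitimpliestransitive} (or directly from \Cref{defn:Birkhoffsection}, since a Birkhoff section exhibits the flow as a suspension away from finitely many orbits), a flow admitting any Birkhoff section is transitive, so no assembly of basic pieces can produce one for a non-transitive flow.

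In short, there is no proof to compare against, and your sketch does not close the gap; the honest content of your proposal --- that one needs either a genus-reducing surgery on Birkhoff sections or a fundamentally different construction --- coincides with the paper's own assessment of why the conjecture remains open.
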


The Birkhoff sections we construct in this paper are in a sense the opposite of those asked for in Fried's conjecture; they have large genus but small number of boundary components. One might be able to use the techniques of this paper to construct Birkhoff sections with relatively small genus but large number of boundary components. In particular, in practice one might be able to find substantially smaller collections of oriented edges when applying \Cref{lemma:admissibletoedgeseq} to get edge sequences, for a given veering triangulation. However, new ideas would probably be required to push the genus of the Birkhoff section in the general case down to $1$, if it is even possible.

On the topic of complexity, one can also attempt to define the complexity of a pseudo-Anosov flow. One way to do this is to ask for the maximum Euler characteristic among all Birkhoff sections to the flow. A potentially more interesting definition would be to ask for the minimum number of tetrahedra in the veering triangulation associated to the flow on $M \backslash \bigcup \mathcal{C}$, as $\mathcal{C}$ varies over all collections of orbits for which the flow has no perfect fits relative to.

For either definition, one can then ask:

\begin{quest} \label{quest:flowcomplexity}
How does the complexity of a flow behaves under operations such as taking covers or performing surgery (e.g. Goodman-Fried surgery, Foulon-Hasselblatt surgery \cite{FH13} or Salmoiraghi's surgeries \cite{Sal21,Sal22})? 
\end{quest}

\subsection{Questions regarding veering triangulations} \label{subsec:vtquestions}

We already mentioned that it might not be very meaningful to try to improve the bounding on complexities done in this paper, but one exception to this might be \Cref{prop:dualgraphflowgraph}. In particular, we are interested in the following:

\begin{quest} \label{quest:dualgraphflowgraphupperbound}
Can the upper bound in \Cref{prop:dualgraphflowgraph} be made linear?
\end{quest}

A positive answer to this question would mean that the dual graph and the flow graph are `equally good' at encoding closed orbits. In any case, attempts to answer this question might also lead to better intuition of the combinatorics of dynamic planes, thus veering triangulations.

Another question that is interesting to the study of veering triangulations is to understand the operation of drilling out orbits on the level of triangulations. More precisely, let $\phi$ be a pseudo-Anosov flow on an oriented closed 3-manifold $M$ without perfect fits relative to a collection of orbits $\mathcal{C}$. Let $\Delta$ be the veering triangulation associated to $\phi$ on $M \backslash \mathcal{C}$. For any collection $\mathcal{C}' \supset \mathcal{C}$, $\phi$ has no perfect fits relative to $\mathcal{C}'$ as well. Let $\Delta'$ be the veering triangulation associated to $\phi$ on $M \backslash \mathcal{C}'$. We say that $\Delta'$ is obtained from $\Delta$ by \textit{drilling out the orbits in $\mathcal{C}' \backslash \mathcal{C}$}.

It is known that circular pseudo-Anosov flows give rise to \textit{layered veering triangulations}. See for example \cite[Theorem 5.15]{LMT20} for an explanation of this. In this language, \Cref{thm:cuspedBirkhoffsection} may be restated as saying that any veering triangulation can be drilled along a single orbit to give a layered veering triangulation.

Layered veering triangulations are generally better understood than non-layered ones. This motivates one to understand how the combinatorics of a veering triangulation change under drilling, in the hopes that one can transfer facts from layered triangulations to general triangulations via \Cref{thm:cuspedBirkhoffsection}.

\begin{quest} \label{quest:drillingdescription}
Can one describe the triangulation $\Delta'$ in terms of $\Delta$ and, say, flow graph cycles that encode each orbit in $\mathcal{C}' \backslash \mathcal{C}$?
\end{quest}

\begin{quest} \label{quest:drillingflowgraphcomplexity}
Can one bound the change in flow graph complexity of an orbit $\gamma$ of $\phi$ which is not an element of $\mathcal{C}'$ when measured relative to $\mathcal{C}$ and to $\mathcal{C}'$?
\end{quest}

Finally, we relay a question that is asked to us by Schleimer and Segerman:

\begin{quest}[Schleimer, Segerman] \label{quest:drillfortoggle}
For any veering triangulation, is it always possible to drill out some collection of orbits to get a veering triangulation with only toggle tetrahedra?
\end{quest}

A positive answer to this question would have significance towards the enumeration of veering triangulations, at least on a conceptual level.

\bibliographystyle{alphaurl}

\bibliography{bib.bib}

\end{document}